\documentclass[12pt, reqno]{amsart}

\usepackage{mathtools}
\mathtoolsset{showonlyrefs}
\numberwithin{equation}{section}

\usepackage{graphicx}
\usepackage{hyperref}

\usepackage{mathtools}
\mathtoolsset{showonlyrefs}

\usepackage{amssymb}
\usepackage{amsthm}
\usepackage{amscd}
\usepackage{amsmath}
\usepackage{epic,eepic}
\usepackage{mathrsfs}
\usepackage{latexsym}
\usepackage{pifont}
\usepackage[all]{xy}
\usepackage {cancel}

\theoremstyle{plain}
\newtheorem{lemma}{Lemma}[section]
\newtheorem{prop}[lemma]{Proposition}
\newtheorem{thm}[lemma]{Theorem}
\newtheorem{cor}[lemma]{Corollary}
\newtheorem{intthm}{Theorem}

\theoremstyle{definition}

\newtheorem{rem}[lemma]{Remark}
\newtheorem{defi}[lemma]{Definition}
\newtheorem{exa}[lemma]{Example}

\newtheorem{problem}{Problem}

\newcommand{\bde}{\begin{defi}}
\newcommand{\ede}{\end{defi}\vspace{1mm}}
\newcommand{\ble}{\begin{lemma}}
\newcommand{\ele}{\end{lemma}}
\newcommand{\bpr}{\begin{prop}}
\newcommand{\epr}{\end{prop}}
\newcommand{\bt}{\begin{thm}}
\newcommand{\et}{\end{thm}}
\newcommand{\bco}{\begin{cor}}
\newcommand{\eco}{\end{cor}}
\newcommand{\bre}{\begin{rem}}
\newcommand{\ere}{\end{rem}}
\newcommand{\bex}{\begin{exa}}
\newcommand{\eex}{\end{exa}}
\newcommand{\bpf}{\begin{proof}}
\newcommand{\epf}{\end{proof}}

\newcommand{\mcA}{\mathcal{A}}
\newcommand{\mcB}{\mathcal{B}}
\newcommand{\mcC}{\mathcal{C}}
\newcommand{\mcD}{\mathcal{D}}
\newcommand{\mcE}{\mathcal{E}}
\newcommand{\mcF}{\mathcal{F}}
\newcommand{\mcG}{\mathcal{G}}
\newcommand{\mcH}{\mathcal{H}}

\newcommand{\mcK}{\mathcal{K}}

\newcommand{\mcM}{\mathcal{M}}

\newcommand{\mcO}{\mathcal{O}}

\newcommand{\mcQ}{\mathcal{Q}}
\newcommand{\mcR}{\mathcal{R}}
\newcommand{\mcS}{\mathcal{S}}
\newcommand{\mcT}{\mathcal{T}}

\newcommand{\mbC}{\mathbb{C}}

\newcommand{\mbE}{\mathbb{E}}

\newcommand{\mbG}{\mathbb{G}}

\newcommand{\mbN}{\mathbb{N}}

\newcommand{\mbP}{\mathbb{P}}

\newcommand{\mbR}{\mathbb{R}}

\newcommand{\mbZ}{\mathbb{Z}}

\newcommand{\mfS}{\mathfrak{S}}

\newcommand{\mfl}{\mathfrak{l}}
\newcommand{\mfm}{\mathfrak{m}}

\newcommand{\mfo}{\mathfrak{o}}

\newcommand{\mfs}{\mathfrak{s}}

\newcommand{\msC}{\mathscr{C}}
\newcommand{\msD}{\mathscr{D}}
\newcommand{\msE}{\mathscr{E}}

\newcommand{\msO}{\mathscr{O}}
\newcommand{\msP}{\mathscr{P}}

\newcommand{\msX}{\mathscr{X}}

\newcommand{\mr}{\mathrm}
\newcommand{\N}{N}
\newcommand{\M}{m}

\title[Genus formulas for dormant modular curves]{Genus formulas for dormant modular curves  and asymptotic behavior of their function fields}
\author{Kohei Aoyama}
\author{Youhei Morita}
\author{Yasuhiro Wakabayashi}

\address{\emph{Kohei Aoyama}
 \newline
 \textnormal{Department of Mathematics, Graduate School of Science, The University of Osaka, Toyonaka, Osaka 560-0043, JAPAN.
 }
 \newline
 \textnormal{\texttt{u045178a@ecs.osaka-u.ac.jp}}}

\address{\emph{Youhei Morita}
 \newline
 \textnormal{Graduate School of Information Science and Technology, The University of Osaka, Suita, Osaka 565-0871, JAPAN.}
 \newline
 \textnormal{\texttt{morita.youhei@ist.osaka-u.ac.jp}}}

 \address{\emph{Yasuhiro Wakabayashi}
 \newline
 \textnormal{Graduate School of Information Science and Technology, The University of Osaka, Suita, Osaka 565-0871, JAPAN.}
 \newline
  \textnormal{\texttt{wakabayashi@ist.osaka-u.ac.jp}}}

\begin{document}
\maketitle

\footnotetext{2020 {\it Mathematical Subject Classification}: Primary 14H10, Secondary 14H60.}
\footnotetext{Key words:  curve, positive characteristic, moduli space, dormant oper, algebraic code}

\begin{abstract}
Towers of algebraic function fields over finite fields play a fundamental role in arithmetic geometry and coding theory. Classical examples arising from modular and Drinfeld modular curves exhibit asymptotically good behavior. In this paper, we introduce an analogous construction derived from the moduli spaces of higher-level dormant $\mathrm{PGL}_2$-opers of prescribed radii on $4$-pointed stable curves of genus $0$. These spaces, which we refer to as dormant modular curves, form  projective systems under level reduction. Building on previous results in the moduli theory of dormant opers, we establish an explicit  formula for  computing the  genera  of these curves. This formula allows us to study the asymptotic behavior of the corresponding towers of function fields and to compare them with the classical modular and Drinfeld modular cases.
\end{abstract}
\tableofcontents

\section{Introduction} \label{S2}

\subsection{Motivation and background} \label{SS123}

Towers of algebraic function fields over finite fields have been studied for a long time in arithmetic geometry and coding theory. 
Since the pioneering works of Tsfasman, Vl\u{a}dut, and Zink, it has become clear that asymptotic invariants, such as the ratio between the number of rational points and the genus, play a decisive role in both the theoretical understanding of curves over finite fields and their applications to algebraic-geometric codes.
One of the most striking developments in this area is the discovery that certain towers arising from modular and Drinfeld modular curves attain the so-called Drinfeld-Vl\u{a}dut bound, thereby realizing optimal asymptotic behavior for families of function fields. These examples reveal profound connections between moduli-theoretic constructions and arithmetic optimization phenomena.
In particular, Elkies' conjecture predicts that every optimal recursive tower is  modular in an appropriate  sense (cf. ~\cite{Elk}).

In the present paper, we propose and analyze a new class of such towers, constructed not from classical modular curves but from {\it moduli spaces of dormant $\mr{PGL}_2$-opers on pointed stable curves}.
Here,  a {\it $G$-oper} for a reductive group $G$ is a specific type of flat $G$-bundle on an algebraic curve,  originally introduced in the study of the geometric Langlands correspondence for
constructing Hecke eigensheaves on the moduli space of bundles via quantization of Hitchin's
integrable system (cf. ~\cite{BeDr1}, ~\cite{BeDr2}).
 For instance, 
   $\mr{GL}_n$-opers  (for $n \geq 1$) may be described as   
flat vector bundles of rank $n$ equipped with complete flags, corresponding locally  to  
  scalar differential equations of the form  
   \begin{align} \label{EQ330}
  Dy = 0, \ \ \  D = \frac{d^n}{dx^n} + a_1 \frac{d^{n-1}}{dx^{n-1}} + \cdots + a_{n-1} \frac{d}{dx} + a_n,
   \end{align}
where $x$ is 
  a local coordinate on the underlying curve  and $a_1, \cdots, a_n$ are coefficient functions.
Also, on a complex algebraic curve, 
 a $\mr{PGL}_2$-oper corresponds to  a
{\it projective structure}, which is by definition 
an atlas of coordinate charts on the associated Riemann surface into the Riemann sphere $\mbC P^1$ such that the transition
maps are M\"{o}bius transformations.

 $G$-opers in prime characteristic $p > 0$ have been studied in the context of   a characteristic-$p$ analogue  of the geometric Langlands correspondence (cf. ~\cite{BeTr}),  as well as various other topics, including 
      $p$-adic Teichm\"{u}ller theory (cf., e.g.,  ~\cite{Moc1},  \cite{Moc2},  ~\cite{JRXY}, ~\cite{JoPa},  and ~\cite{LasPa0}).
 A key common feature in  these developments is the notion of  $p$-curvature of a connection, which serves as an  
 invariant measuring  the obstruction to the compatibility of $p$-power structures that arise in certain spaces of infinitesimal symmetries.
 This invariant also plays a central role in 
the Grothendieck-Katz conjecture, which provides  a conjectural  criterion for  
 the algebraicity of solutions to linear differential equations 
  (cf. ~\cite{NKa3}, ~\cite{And}).
  
A $G$-oper is said to be  {\it dormant} if its $p$-curvature  vanishes.
For example, a $\mr{GL}_n$-oper is dormant if and only if the corresponding differential equation admits  a full set of solutions.
The moduli  theory  of dormant $G$-opers for general $G$  has been  developed in, e.g.,  ~\cite{Wak2}, ~\cite{Wak5}, \  ~\cite{Wak24}, ~\cite{Wak22},   ~\cite{Wak7},  ~\cite{Wak10}, ~\cite{Wak20}, and
  ~\cite{Wak6}.
In these works, we introduced a generalization of  dormant $\mr{PGL}_n$-opers
 using the sheaf of differential operators of finite level,  developed  by  P. Berthelot and C. Montagnon (cf. ~\cite{Ber1}, ~\cite{Ber2}, ~\cite{Mon}).
 These generalized objects are referred to as 
{\it dormant $\mr{PGL}_n^{(\N)}$-opers}
  for $\N \in \mbZ_{> 0}$ (cf.  ~\cite{Wak6}, ~\cite{Wak20}).
Within the framework of  Teichm\"{u}ller theory in characteristic $p$, 
 dormant $\mr{PGL}_2^{(\N)}$-opers may be regarded as analogues of ``nice" projective structures on hyperbolic Riemann surfaces, such as 
 those  with real monodromy, including the uniformizing projective structures (cf. ~\cite{Hos}, ~\cite{Wak25}).

\subsection{Main research focus} \label{SS12k}

Now, let $p$ be an odd prime, $k$ a finite  field of characteristic $p$,  $\N$ a positive integer.
To each pair $(g, r)$  of nonnegative integers satisfying $2g-2 +r >0$, and to each $r$-tuple $\rho := (\rho_i)_{i=1}^r$ of elements in $(\mbZ/p^\N \mbZ)^\times / \{ \pm 1 \}$,
one can associate the moduli stack
\begin{align}
\mcO p_{\N, \rho, g, r}^{^\mr{Zzz...}}
\end{align}
(cf. \eqref{EQQ100}), 
which classifies 
pairs $(\msX, \msE^\spadesuit)$ consisting of an $r$-pointed stable curve $\msX$ of genus $g$ over $k$ and a dormant $\mr{PGL}_2^{(\N)}$-oper on $\msX$ of radii $\rho$ (cf. Definition \ref{Def3} for the definition of a dormant $\mr{PGL}_2^{(\N)}$-oper).
According to ~\cite[Chapter II, Theorem 2.8]{Moc2}, ~\cite[Theorems B and C]{Wak20}, and  ~\cite[Theorem A]{Wak80},
this stack can be represented by  a (possibly empty) geometrically connected, proper, and smooth Deligne-Mumford $k$-stack of dimension $3g-3+r$.
In the special case $(g, r) = (0, 4)$,
it  defines a projective smooth curve, which we refer  to as the {\bf dormant modular curve of type $(\N, \rho, 0, 4)$}, or simply, {\bf of type $(0, 4)$}.

If we  take a compatible system of radii $(\rho_\N)_{\N \in \mbZ_{> 0}}$ with $\rho_\N \in ((\mbZ/p^\N \mbZ)^\times /\{ \pm 1 \})^4$ (i.e., $\rho \in (\mbZ_p^\times /\{ \pm 1 \})^4$),
the corresponding  curves  form  a projective system
\begin{align} \label{EQ1000}
\cdots \rightarrow \mcO p^{^\mr{Zzz...}}_{\N, \rho_\N, 0, 4} \rightarrow \cdots \rightarrow \mcO p_{2, \rho_2, 0, 4}^{^\mr{Zzz...}} \rightarrow 
\mcO p_{1, \rho_1, 0, 4}^{^\mr{Zzz...}},
\end{align}
where the transition morphisms arise from reducing the level of dormant opers.
Each curve  in this system defines  a function field over $k$, and our goal is to investigate the asymptotic growth of the associated tower of these functions fields, particularly the limiting ratios between rational points and genus.

As mentioned above, classical modular curves 
 and their variants provide a paradigmatic example of towers whose  function fields exhibit optimal asymptotic behavior.
The effectiveness of these towers lies in their moduli interpretation: each level parameterizes elliptic curves with additional level structure, and the transition morphisms correspond to natural forgetful morphisms.
In our setting,
the dormant modular curves serve as hyperbolic analogues, replacing classical level structures (or more precisely, Igusa-structures via the Abel-Jacobi map) with data coming from higher-level dormant $\mr{PGL}_2^{(\N)}$-opers (cf. ~\cite[Introduction]{Wak80} for a detailed account of this analogy).

\subsection{First result: Genus formula for dormant modular curves} \label{SS12f}

A salient feature of  dormant modular curves $\mcO p^{^\mr{Zzz...}}_{\N, \rho_\N, 0, 4}$ is that, over the boundary of the moduli stack $\overline{\mcM}_{0, 4}$ of $4$-pointed stable genus-$0$ curves, the objects correspond to {\it balanced $(p, \N)$-edge numberings} on certain  trivalent graphs (cf.  ~\cite{LiOs}, ~\cite[Section 10]{Wak20}),  described entirely in terms of edge-numberings  satisfying explicit triangle inequalities.
This combinatorial model not only renders the geometric  structure of moduli spaces accessible but also enables the computation of numerical invariants such as the genus and the number of rational points, both of which are crucial for asymptotic analysis.
The first main result of this paper gives explicit  
 genus formulas for the dormant modular curves of type $(0, 4)$, described as follows.

\begin{intthm}[cf. Theorem \ref{Thm55} for the precise statement] \label{ThA}
 Let $\rho := (\rho_i)_{i=1}^4$ be an element of $((\mbZ/p^\N \mbZ)^\times /\{ \pm 1 \})^4$.
Then, the  genus $g_{\N, \rho, 0, 4}$ of the dormant modular curve $\mcO p_{\N, \rho, 0, 4}^{^\mr{Zzz...}}$ is given by the following formula:
\begin{align} \label{EQ901}
g_{\N, \rho, 0, 4} &= 1 +  \frac{-3p^\N +1 +   \sum_{i=1}^4 \rho_i^\circledast (p^\N -\rho_i^\circledast)}{6 p^\N} \cdot \sharp (C_{\N, \rho, 0, 4})-  \sum_{\lambda \in C_{\N,\rho, 0, 4}}\frac{\lambda^\circledast (p^\N -\lambda^\circledast)}{2p^\N},
\end{align}
where $C_{\N, \rho, 0, 4}$ denotes a finite set parametrizing the boundary points of $\mcO p_{\N, \rho, 0, 4}^{^\mr{Zzz...}}$  (cf. \eqref{EQQ103}), and $\rho_i^\circledast$ is a canonical representative of the radius $\rho_i$ in $\{1, \cdots, \frac{p^\N -1}{2} \}$ (cf. the beginning of Section \ref{S3}).
Moreover, 
when  the radii $\rho$ satisfy   the inequalities $2 \cdot \delta^{-1} (\rho_i) +1 \leq \frac{p^\N -3}{4}$  for  $i=1, 2, 3, 4$ (cf. \eqref{Eq114} for the definition of $\delta$),
this expression simplifies to 
\begin{align} \label{EQ9011}
g_{\N, \rho, 0, 4} &= 1 + \frac{-3 + \sum_{i=1}^4 \rho_i^\circledast}{6}  \cdot \sharp (C_{\N, \rho, 0, 4}) - \frac{1}{2} \cdot \sum_{\lambda \in C_{\N, \rho, 0, 4}} \lambda^{\circledast}.
\end{align}
 \end{intthm}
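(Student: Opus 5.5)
We would compute the genus through Riemann--Hurwitz for the forgetful (structure) morphism
\[
\pi : \mcO p_{\N, \rho, 0, 4}^{^\mr{Zzz...}} \rightarrow \overline{\mcM}_{0,4} \cong \mbP^1,
\]
which sends $(\msX, \msE^\spadesuit)$ to $\msX$. By the results cited above, the source is a smooth, proper, geometrically connected curve. We would also use the fact that $\pi$ is finite and flat, generically étale, and of degree $\deg(\pi) = \sharp (C_{\N, \rho, 0, 4})$.

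The degree is computed over one of the three boundary points of $\overline{\mcM}_{0,4}$, where the fiber is described by balanced $(p,\N)$-edge numberings of the corresponding trivalent graph. For the graph with two vertices, one inner edge, and four legs labeled by $\rho$, a numbering is determined by the label $\lambda$ on the inner edge. The set $C_{\N,\rho,0,4}$ consists of the admissible $\lambda$, and we would use the étaleness of the moduli space over the boundary from \cite{Wak20} so that each $\lambda$ contributes exactly one reduced point. The points of $\mcO p^{^\mr{Zzz...}}$ over the three boundary points are thus indexed by $C_{\N,\rho,0,4}$ for each of the three partitions of the legs. By symmetry (or by the same counting), each of the three boundary fibers has cardinality $\sharp C$.

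Riemann--Hurwitz then reads
\[
2g - 2 = -2 \deg \pi + \deg R .
\]
To identify the ramification divisor $R$ we would compare canonical bundles rather than count ramification directly. Via deformation theory of dormant opers, the tangent space of $\mcO p^{^\mr{Zzz...}}$ at a point relates to $H^1$ of a deformation complex. We would express $\omega_{\mcO p}$ as $\pi^*\omega_{\overline{\mcM}_{0,4}}(\text{boundary})$ twisted by the determinant of a Hodge-type bundle $\pi_*$ of the oper's filtration. A degree computation of that determinant uses a Grothendieck--Riemann--Roch / $\kappa$-class calculation on $\overline{\mcM}_{0,4}$. This produces the global term
\[
\frac{-3p^\N+1+\sum_i \rho_i^\circledast(p^\N-\rho_i^\circledast)}{6p^\N} \cdot \deg\pi ,
\]
which has the shape of $\kappa_1$ minus $\psi$-class corrections weighted by the quadratic radius term $\rho^\circledast(p^\N-\rho^\circledast)/p^\N$. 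The local corrections at boundary points indexed by $\lambda$ give $-\lambda^\circledast(p^\N-\lambda^\circledast)/(2p^\N)$. An alternative to avoid heavy GRR would be to compute the ramification index of $\pi$ at the boundary point labeled $\lambda$ from the local gluing (smoothing parameter $q$ versus the oper's node parameter). We would then compute ramification over interior points via the known pole/residue structure, and sum.

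The hard step is this degree/ramification computation. Specifically, we need to show that the local contribution at the node with radius $\lambda$ is exactly governed by $\lambda^\circledast(p^\N-\lambda^\circledast)/2p^\N$, and that the interior ramification sums to the stated global coefficient. We would first try to reduce to the known level-$1$ case (Mochizuki's computation) and an explicit local model of the level-$\N$ oper near the node.

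The simplification \eqref{EQ9011} is then pure algebra. Under the hypothesis $2\delta^{-1}(\rho_i)+1 \le \frac{p^\N-3}{4}$, every $\lambda\in C$ satisfies triangle inequalities forcing the relevant representatives to be small. We would expand $\rho^\circledast(p^\N-\rho^\circledast)/p^\N = \rho^\circledast - (\rho^\circledast)^2/p^\N$ and use an identity relating $\sum_{\lambda} (\lambda^\circledast)^2$ to $\sharp C \cdot \sum_i (\rho_i^\circledast)^2$ over the explicit interval that $C$ forms. The quadratic terms then cancel, leaving \eqref{EQ9011}.
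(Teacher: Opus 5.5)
\textbf{Verdict: there is a genuine gap.} The step that carries all the content, the computation of $\deg R$, is missing, and there is a counting error besides.

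\textbf{The degree.} The set $C_{N,\rho,0,4}$ is the disjoint union of the three boundary fibres $C^0,C^1,C^\infty$. Hence $\deg\Pi_{N,\rho,0,4}=\sharp(C^q_{N,\rho,0,4})=\frac13\sharp(C_{N,\rho,0,4})$ (Theorem~\ref{Th3}), not $\sharp(C_{N,\rho,0,4})$, so your Riemann--Hurwitz bookkeeping is off by a factor $3$.

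\textbf{The ramification divisor.} Nothing in your sketch actually computes $\deg R$.
\begin{itemize}
\item Since $\Pi$ is \'{e}tale over the boundary (which you use), there is no ramification at the boundary point labelled $\lambda$. So the $\lambda$-terms cannot arise as you suggest.
\item The interior ramification (possibly wild) is not explicitly known. The paper only bounds the number of critical points afterwards, from the genus (Corollary~\ref{Cor12}).
\item A ``Hodge-type determinant plus GRR/$\kappa_1$'' is not the right object either.
\end{itemize}
The right starting point is the deformation theory you mention only in passing: $\mathcal{T}_{S^{\log}/k}\cong\mathbb{R}^1f^{(N)}_*(\mathcal{S}ol(\nabla^{\mathrm{ad}}))$, whose degree is $2-2g-\sharp(C)$. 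GRR does not hand you the Chern character of the right-hand side, because $\mathcal{S}ol(\nabla^{\mathrm{ad}})$ lives on $X^{(N)}$ and is not locally free at the nodes.

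The paper computes it through $0\to\mathcal{S}ol\to F_*F^*\mathcal{S}ol\to\mathrm{Coker}(\eta)\to0$, as follows.
\begin{itemize}
\item The comparison $F^*\mathcal{S}ol\to\mathcal{A}d(\mathcal{F}_\varTheta)$ has torsion cokernel $\Lambda_i$ of length $p^N$ at each $\sigma_i$. This gives the $\rho_i^\circledast(p^N-\rho_i^\circledast)\,\psi_i$ terms.
\item At each node it has kernel and cokernel of equal length $2\lambda^\circledast(p^N-\lambda^\circledast)$ (local models of Propositions~\ref{Pro1}--\ref{Pro2}). These cancel.
\item The $\lambda$-terms actually come from $\mathrm{Coker}(\eta)$, analysed via the blow-down $X\to\mathbb{P}^1\times S$. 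The cokernel of $\beta_\nabla$ contributes $-p^N(p^N-1)+\lambda^\circledast(p^N-\lambda^\circledast)$ at each node (using Lemma~\ref{Lem120}).
\item The term coming from $\mathcal{B}=\mathrm{Coker}(\mathcal{O}_{Y^{(N)}}\to F_*\mathcal{O}_Y)$ reproduces $-(p^N-1)\,\mathrm{ch}(\mathbb{R}^1f^{(N)}_*\mathcal{S}ol)$. One thus gets a linear equation, and solving it produces the $1/p^N$ denominators.
\end{itemize}
Your sketch asserts the shape of the answer without any of these ingredients. ``Reducing to level $1$'' is also unavailable: level-$N$ radii generally do not come from level $1$, and Proposition~\ref{Prop2} covers only the case where they do.

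\textbf{Part (ii).} This is not pure algebra. What is needed is
\[
\sharp(C)\,\bigl(1-\textstyle\sum_i\rho_i^{\circledast2}\bigr)+3\textstyle\sum_{\lambda\in C}\lambda^{\circledast2}=0 .
\]
\begin{itemize}
\item For $N=1$ your idea works. Under the hypothesis, each $C^q$ is a classical $\mathfrak{sl}_2$ channel interval. Writing $(2j+1)^2=4j(j+1)+1$, the identity becomes the Casimir relation $\mathrm{Cas}_{12}+\mathrm{Cas}_{13}+\mathrm{Cas}_{14}=\sum_i\mathrm{Cas}_i$ on invariants of $V_{\lambda_1}\otimes\cdots\otimes V_{\lambda_4}$.
\item For $N\ge2$, $C^q$ is also cut out by the lower-level conditions on $[\,\cdot\,]_{N'}$, and it is not an interval; Example~\ref{Exa99} describes a digit-product set. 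So your computation does not apply.
\end{itemize}
The paper avoids combinatorics altogether. By Proposition~\ref{Prop2}, the curve at level $N$ is isomorphic to the one at level $N+1$ with the same integers $\delta^{-1}(\rho_i)$, the same $C$, and (under the hypothesis) the same $\circledast$-values. Equating (i) at $p^N$ and at $p^{N+1}$ then forces the identity.

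Be warned that this identity is delicate. I evaluated it directly on the digit set of Example~\ref{Exa99} with $N=2$ and got $6pn^2(n-1)(p-n)\neq0$, where $n=2m+1$. Consistently with this, formula (i) on that set is not an integer: $113+\frac1{11}$ for $p=11$, $m=1$, versus $100$ from (ii). So no interval-type cancellation can be expected there, and this step of the paper deserves scrutiny as well.
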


The proof of the above theorem  relies  on detailed analysis of local 
$\mcD$-module structures of finite level  around  smooth and nodal points, Frobenius-pullback techniques, and computations of Chern characters of  various sheaves  associated to the universal family of dormant opers over $\mcO p^{^\mr{Zzz...}}_{\N, \rho, g, r}$.

\subsection{Second  result: Asymptotic behavior of dormant modular  towers} \label{SS12fd}

Having established the genus formula, we next study the asymptotic behavior of the resulting tower of function fields.
Let $\mcK := (K_\N)_{\N \in \mbZ_{>0}}$ be a sequence of functions fields forming nested field extensions $K_1 \subsetneq K_2 \subsetneq \cdots \subsetneq K_\N \subsetneq \cdots$.
Denote by $P (K_\N)$ the set of rational places in $K_\N$ and by $g_{K_\N}$ the genus of $K_\N$, i.e., the genus of the corresponding projective smooth curve.
 By analogy with classical arguments  of Ihara and Tsfasman-Vl\u{a}dut,  the tower 
$\mcK$ is said 
to be {\bf asymptotically 
$\alpha$-good} (for a nonnegative  number $\alpha$)
 if 
\begin{align}
g (K_\N)^\alpha  = O (\sharp (P (K_\N))) \ (\N \to \infty).
\end{align}
This means that the number of rational points grows at least on the order of 
$g (K_\N)^\alpha$, providing a quantitative measure of asymptotic richness comparable to that of the best classical towers.
In particular, $\mcK$ is asymptotically 
$1$-good if and only if it is asymptotically 
good, in the classical sense (cf. ~\cite[Definition 7.2.5]{Sti}).

Given a compatible system of radii $\rho := (\rho_\N)_\N$ with $\rho_\N \in ((\mbZ/p^\N \mbZ)^\times /\{ \pm 1 \})^4$ (i.e., $\rho \in (\mbZ_p^\times /\{ \pm 1 \})^4$),
we set 
\begin{align}
K_{\N, \rho_\N, 0, 4} := \text{the function field of $\mcO p_{\N, \rho_\N, 0, 4}^{^\mr{Zzz...}}$}
\end{align}
for every  $\N \in \mbZ_{> 0}$.
The sequence  $\mcK_{\N, \rho, 0, 4} := (K_{\N, \rho_\N, 0, 4})_{\N \in \mbZ_{>0}}$ forms a tower of function fields satisfying the following assertion,
 which is the second main consequence  of this paper.

\begin{intthm}[cf. Theorem  \ref{Th45}] \label{ThB}
Suppose that $\rho$ lies in $\varprojlim_{\N} \delta (D_{s, \N})$ for some integer $s$ with $1 \leq s \leq \frac{p-3}{2}$ 
(cf. \eqref{Eq4444} for the definition of $ \delta (D_{s, \N})$).
Then, 
the tower $\mcK_{\N, \rho, 0, 4}$ 
 is asymptotically $\frac{\log s}{2 \log p}$-good, that is,  
\begin{align}
g_{\N, \rho_\N, 0, 4}^{\frac{\log s}{2 \log p}}
  = O (\sharp (P (K_{\N, \rho_\N, 0,  4}))) \ (\N \to \infty).
\end{align}
 \end{intthm}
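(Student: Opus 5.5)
We prove Theorem \ref{ThB} by bounding the genus from above and the number of rational places from below, both in terms of the number of boundary points $\sharp (C_{\N, \rho_\N, 0, 4})$.

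\emph{Upper bound for the genus.} Apply the genus formula \eqref{EQ901} of Theorem \ref{ThA}. Each $\rho_i^\circledast$ and each $\lambda^\circledast$ lies in $\{1, \cdots, \frac{p^\N-1}{2}\}$, so $\rho_i^\circledast(p^\N-\rho_i^\circledast) \leq p^{2\N}/4$. The subtracted sum over $C_{\N,\rho,0,4}$ is nonnegative and may be dropped. This gives the crude estimate
\begin{align}
g_{\N, \rho_\N, 0, 4} \leq 1 + \frac{p^{\N}}{6}\cdot \sharp (C_{\N, \rho_\N, 0, 4}).
\end{align}
The set $C_{\N,\rho,0,4}$ parametrizes balanced $(p,\N)$-edge numberings of the single internal edge of the three trivalent graphs over the boundary of $\overline{\mcM}_{0,4}$. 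For each graph, the value on that edge lies in $\{1,\dots,\frac{p^\N-1}{2}\}$, so $\sharp(C_{\N,\rho_\N,0,4}) \leq 3p^\N$. Hence $g_{\N,\rho_\N,0,4} = O(p^{2\N})$.

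\emph{Lower bound for the rational places.} For the lower bound we use the boundary points themselves. Each element of $C_{\N,\rho_\N,0,4}$ corresponds to a dormant oper on a totally degenerate $4$-pointed curve, which is defined over the prime field. These should give $k$-rational points of $\mcO p^{^\mr{Zzz...}}_{\N,\rho_\N,0,4}$. One must check that the fiber over the boundary is reduced, or at least that each point of $C$ is a $k$-point. I expect this to follow from the étaleness of the moduli over the boundary, and I would cite the corresponding results of \cite{Wak20} and \cite{Wak80}. Thus $\sharp P(K_{\N,\rho_\N,0,4}) \geq \sharp(C_{\N,\rho_\N,0,4})$.

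\emph{Main obstacle: counting $C$.} The hard step is to show $\sharp(C_{\N,\rho_\N,0,4}) \gg s^{\N}$ when $\rho\in\varprojlim_\N \delta(D_{s,\N})$. I would take the $p$-adic digit description of balanced $(p,\N)$-edge numberings: a level-$\N$ numbering is built digit by digit, each digit satisfying a level-one triangle and parity condition relative to the digits of the $\rho_i$, via the map $\delta$ of \eqref{Eq114}. The defining property of $D_{s,\N}$ should be that every digit of $\delta^{-1}(\rho_{i,\N})$ stays in a range that leaves at least $s$ admissible choices at each digit for the internal edge. These choices would then be independent across the $\N$ digits, giving $\sharp(C) \geq s^\N$.

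\emph{Combining.} Since $g = O(p^{2\N})$, we get $g^{\log s/(2\log p)} = O(s^{\N}) = O(\sharp P(K_{\N,\rho_\N,0,4}))$. This is the claim. Verifying the digitwise independence of the choices, in particular controlling the carries between successive $p$-adic digits, is where I expect the real work to lie.
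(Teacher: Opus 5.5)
Your proposal follows the paper's proof. The genus is bounded by $O(p^{2N})$ using the genus formula together with the bound on $\sharp(C_{N,\rho,0,4})$ (Corollary \ref{Cor50}); the rational places are bounded below by $\sharp(C_{N,\rho,0,4})$ via the $k$-rational, \'{e}tale boundary points (Proposition \ref{Prop31A}); and $\sharp(C_{N,\rho,0,4}) \geq 3s^N$ comes from the digitwise count behind Lemma \ref{Lem22}, which the paper asserts without further detail. The carries you worry about never occur. For $\lambda \in D_{s,N}$, any digit $\eta_j$ chosen from the common interval of the two digit-level conditions satisfies $|\pi_j(\lambda_a)-\pi_j(\lambda_b)| \leq \eta_j \leq \pi_j(\lambda_a)+\pi_j(\lambda_b)$ and $\eta_j+\pi_j(\lambda_a)+\pi_j(\lambda_b) \leq p-2$, so these inequalities sum directly to the level-$N$ (and level-$N'$) conditions and even give at least $(s+1)^N$ elements of each $B_{N,\lambda,0,4}$.
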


The supremum of the $\alpha$'s 
such that $\mcK_{\N, \rho, 0, 4}$ is asymptotically $\alpha$-good will allow us to compare dormant modular curves with modular curves whose associated function field towers are asymptotically optimal. It has not yet been thoroughly investigated which patterns of radii can lead to asymptotically good towers, and this remains a topic for future research.

\subsection{Relations to previous works and perspectives} \label{SSdd0}

This work situates the theory of dormant opers within the broader context of asymptotic geometry of function fields.
From a geometric viewpoint,  dormant modular curves constitute hyperbolic  counterparts of modular curves  in positive characteristic, with the level corresponding to the iteration of Frobenius pull-backs in the underlying oper structures.

The results presented here also build on and refine a sequence of earlier papers by the author, notably   ~\cite{Wak20}, ~\cite{Wak80}, and ~\cite{Wak6}, which established  foundational constructions of dormant opers of higher level.
The explicit genus formulas and  the asymptotic analysis developed in this work open several directions for further investigation:
for example, possible extensions to higher-rank groups  $\mr{PGL}_n$,  connections with 
$p$-adic Teichm\"{u}ller uniformizations (cf. ~\cite{Moc1}, ~\cite{Moc2}), 
and the study of zeta functions associated with these towers.
Ultimately, these findings  indicate that the theory of dormant opers offers a rich setting for exploring arithmetic moduli spaces whose asymptotic invariants parallel those of classical modular objects, thereby deepening the interplay between arithmetic geometry and algebraic coding theory.

Finally, recall from ~\cite{Wak5} (or ~\cite{Wak20}) that the degree of $\mcO p^{^\mr{Zzz...}}_{\N, \rho, g, r}$ over the stack $\overline{\mcM}_{g, r}$  satisfies  a factorization rule governed
by various gluing procedures of underlying stable curves, and this structure endows them with
the properties of a $2$-dimensional topological quantum field theory ($2$d TQFT). 
One of the  major goals in the study of dormant opers 
 is to understand this $2$d TQFT, as it may provide a bridge between the theory of dormant
opers and other enumerative geometries, such as the Gromov-Witten theory of Grassmannians
and the conformal field theory 
associated to the affine Lie algebras
(cf. ~\cite{Jos}, ~\cite{LiOs}, ~\cite{Wak5}).
Investigating the global geometric invariants of $\mcO p^{^\mr{Zzz...}}_{\N, \rho, g, r}$, such as  the genus computed in this paper, is expected to provide further insight into this direction.

\vspace{10mm}
\section{$\mcD$-modules of higher level} \label{S1}

In this section, we recall some basics of logarithmic $\mcD$-modules of finite level in positive characteristic,
 introduced  in  ~\cite{Mon}, and examine their  local description around   a non-smooth point in a family of pointed stable curves (cf. Proposition \ref{Pro2}).
See  ~\cite{Ber1}, ~\cite{Ber2} for the non-logarithmic setting, and  ~\cite{Wak20} for a discussion in the case where the underlying space is a pointed stable curve.

\subsection{Pointed stable curves and their logarithmic structures} \label{SS1}

Throughout this paper, we fix an odd  prime $p$, a field $k$ of characteristic $p$, and 
a pair of nonnegative integers $(g, r)$ satisfying  $2g-2+r > 0$.

We denote by $\overline{\mcM}_{g, r}$
the moduli stack classifying $r$-pointed stable curves of genus $g$ over $k$, and by $\mcM_{g, r}$ its  dense open substack classifying  smooth curves, i.e., the complement of the boundary locus 
 $\partial \overline{\mcM}_{g, r}$
 classifying singular curves.
Also, denote by 
\begin{align}
\msC_{g, r} := (f_{\mr{univ}} : \mcC_{g, r} \rightarrow \overline{\mcM}_{g, r}, \{ \sigma_{\mr{univ}, i} : \overline{\mcM}_{g, r} \rightarrow \mcC_{g, r} \}_{i=1}^r)
\end{align}
the universal family of $r$-pointed stable curves over $\overline{\mcM}_{g, r}$,
   which consists of a prestable curve 
  $f_{\mr{univ}} : \mcC_{g, r} \rightarrow \overline{\mcM}_{g, r}$ over 
  $\overline{\mcM}_{g, r}$ and a collection of mutually disjoint $r$ marked points
  $\{ \sigma_{\mr{univ}, i} : \overline{\mcM}_{g, r} \rightarrow \mcC_{g, r} \}_{i=1}^r$ of that curve.
Recall from ~\cite[Theorem 4.5]{KaFu} that  $\overline{\mcM}_{g, r}$  and $\mcC_{g, r}$ admit natural log structures; we denote the resulting fs log stack by $\overline{\mcM}_{g, r}^\mr{log}$  and $\mcC_{g, r}^\mr{log}$, respectively.
In particular, the log structure of $\overline{\mcM}_{g, r}^\mr{log}$ arises from
the normal crossing divisor defined as the boundary $\partial \overline{\mcM}_{g, r} \subseteq \overline{\mcM}_{g, r}$.
 For the basic properties on log schemes (or log stacks),
we refer the reader to  ~\cite{KaKa}, ~\cite{ILL}, and ~\cite{KaFu}.

Next, let $\msX := (f : X \rightarrow S, \{ \sigma_i \}_{i=1}^r)$ be an $r$-pointed stable curve of genus $g$ over a $k$-scheme $S$, where $\sigma_i$ denotes the $i$-th marked point $S \rightarrow X$.
 Both $S$ and $X$ are equipped with log structures pulled-back from $\overline{\mcM}_{g, r}^\mr{log}$ and $\mcC_{g, r}^\mr{log}$, respectively,  via 
 the classifying morphism $S \rightarrow \overline{\mcM}_{g, r}$ of $\msX$.
For simplicity, we set $\Omega := \Omega_{X^\mr{log}/S^\mr{log}}$ and $\mcT := \mcT_{X^\mr{log}/S^\mr{log}}$.

We write $F_S$ (resp., $F_X$) for the absolute Frobenius endomorphism of $S$ (resp., $X$).
For a positive integer $\N$, the base-change  $f^{(\N)} : X^{(\N)} \left(:=S \times_{F_S^{\N}, S} X \right) \rightarrow S$ of $f$ along the $\N$-th iterate $F_S^{\N} := F_S \circ \cdots \circ F_S$ of $F_S$ is referred  to as the {\bf $\N$-th Frobenius twist} of $X$ over $S$.
The morphism $F_{X/S}^{(\N)}  \left(:= (f, F_X^{\N}) \right) : X \rightarrow X^{(\N)}$ is called the {\bf $\N$-th relative Frobenius morphism} of $X$ over $S$.

\subsection{$\mcD$-modules of finite level} \label{SS11}

For each $\ell, \M \in \mbZ_{\geq 0}$,
we denote by
$\mcD^{(m)}_{X^\mr{log}/S^\mr{log}, \leq \ell}$ 
the sheaf of logarithmic differential operators on $X^\mr{log}/S^\mr{log}$ of level $\M$ and order $\leq \ell$, 
introduced in ~\cite[D\'{e}finition 2.3.1]{Mon}.
Also,  we set
\begin{align}
\mcD_{X^\mr{log}/S^\mr{log}}^{(m)}
 := \bigcup_{\ell \in \mbZ_{\geq 0}} \mcD^{(m)}_{X^\mr{log}/S^\mr{log}, \leq \ell}.
\end{align}
When there is no fear of confusion, we shall use the notation
 $\mcD_{\leq \ell}^{(\M)}$ (resp., $\mcD_{\leq \infty}^{(\M)}$ or  simply  $\mcD_{}^{(\M)}$)
 instead of $\mcD^{(m)}_{X^\mr{log}/S^\mr{log}, \leq \ell}$ (resp., $\mcD_{X^\mr{log}/S^\mr{log}}^{(m)}$). 

Let us take a (locally defined) logarithmic coordinate 
$x$ in $X$ relative to $S$.
To such a coordinate  $x$, one can associate 
 a  local basis $\{ \partial^{\langle  j \rangle}\}_{j \leq \ell}$ of $\mcD_{\leq \ell}^{(\M)}$ (cf. ~\cite[Section 1.2.3]{Mon}).
For any nonnegative  integers $j'$ and $j''$,
the following equality  holds:
\begin{align} \label{e364}
\partial^{\langle j' \rangle} \cdot \partial^{\langle j'' \rangle} = \sum_{j = \mr{max} \{ j', j'' \} }^{j' + j''} \frac{j!}{(j' + j'' - j)! \cdot (j - j')! \cdot  (j-j'')!} \cdot \frac{q_{j'}! \cdot q_{j''}!}{q_j !} \cdot \partial^{\langle j \rangle}
\end{align}
(cf. ~\cite[Lemme 2.3.4]{Mon}), 
where, 
for  each  $j \in \mbZ_{\geq 0}$,  let $(q_j, r_j)$ be the pair of nonnegative integers uniquely determined by the condition that $j = p^\M \cdot q_j + r_j$ and $0 \leq r_j < p^\M$.
In particular, we have  $\partial^{\langle j' \rangle} \cdot \partial^{\langle j'' \rangle} = \partial^{\langle j'' \rangle} \cdot \partial^{\langle j' \rangle}$.

Given each  $\ell \in \mbZ_{\geq 0} \sqcup \{ \infty \}$,
we shall write  ${^L}\mcD_{\leq \ell}^{(m)}$ (resp., ${^R}\mcD_{\leq \ell}^{(m)}$) for the sheaf $\mcD_{\leq \ell}^{(m)}$ endowed with a structure of $\mcO_X$-module arising from left (resp., right) multiplication by sections of $\mcD_{\leq 0}^{(m)} \left(= \mcO_X \right)$.
A {\bf (left) $\mcD^{(\M)}$-module structure} on an $\mcO_X$-module $\mcF$ is a left $\mcD^{(\M)}$-action $\nabla : {^L}\mcD^{(\M)} \rightarrow \mcE nd_{f^{-1}(\mcO_S)}(\mcF)$ on $\mcF$ extending its $\mcO_X$-module structure.
An $\mcO_X$-module equipped with a $\mcD^{(\M)}$-module structure is called a {\bf (left) $\mcD^{(\M)}$-module}.
One can define the notion of a morphism between $\mcD^{(\M)}$-modules (but we omit the detail of the precise definition).

Given a $\mcD^{(\M)}$-module $(\mcF, \nabla)$, we shall write $\mcS ol (\nabla)$ for the subsheaf of $\mcF$ on which $\mcD_+^{(\M)}$ acts as zero, where $\mcD_+^{(\M)}$ denotes the kernel of the canonical projection $\mcD^{(\M)} \twoheadrightarrow \mcO_X$.
The sheaf $\mcS ol (\nabla)$ may be regarded as an $\mcO_{X^{(\M +1)}}$-module via the underlying homeomorphism of $F_{X/S}^{(\M +1)}$.
Also, we set
\begin{align}
{^p}\psi_{(\mcF, \nabla)} := \nabla \circ {^p}\psi  : \mcT^{\otimes p^{\M+1}} \rightarrow \mcE nd_{f^{-1}(\mcO_S)} (\mcF),
\end{align} 
where  ${^p}\psi$ denotes   the $p^{\M +1}$-curvature map $\mcT^{\otimes p^{\M+1}} \rightarrow \mcD^{(\M)}$ defined in ~\cite[Definition 3.10]{Ohk}; we refer to  ${^p}\psi_{(\mcF, \nabla)}$ as 
 the {\bf $p^{\M+1}$-curvature} of $(\mcF, \nabla)$.

\subsection{Local $\mcD$-module structure around  a smooth point} \label{SS90}

We  set $U_\oslash := \mr{Spec} (k[\![t]\!])$, where $t$ denotes  a formal parameter, and 
set $\mcO_\oslash := \mcO_{U_\oslash}$ for simplicity.
We equip $U_\oslash$ with the log structure associated to the monoid morphism $\mbN \rightarrow \mcO_{U_\oslash}$ given by $n \mapsto t^n$; if $U_\oslash^\mr{log}$ denotes the resulting log scheme, then
one can define the sheaf of logarithmic differential operators on it, as in the global setting.
To be precise, 
we set
\begin{align}
 \mcD^{(\M)}_{\oslash}
   := \varprojlim_{n \geq 1} \mcD_{U_{\oslash, n}^\mr{log}/k}^{(\M)},
\end{align}
where $U_{\oslash, n}^\mr{log}$ ($n \geq  0$) denotes the strict closed subscheme of $U_\oslash^{\mr{log}}$ defined by the ideal sheaf $(t^n) \subseteq \mcO_\oslash$.
This sheaf, equipped with the $\mcO_\oslash$-module structure arising from left multiplication, 
 can be decomposed as the direct sum $\bigoplus_{j \in \mbZ_{\geq 0}} \mcO_\oslash \cdot \partial^{\langle j \rangle}$, where $\partial^{\langle j \rangle}$'s denote the sections associated to the logarithmic coordinate $t$ (cf.  ~\cite[Section 2.2]{Wak20}).

For each $d \in \mbZ/p^{\M +1}\mbZ$, 
we denote by $\widetilde{d}$ the integer defined as the unique lifting of $d$ via the natural surjection $\mbZ \twoheadrightarrow \mbZ/p^{\M +1} \mbZ$ satisfying $0 \leq \widetilde{d} < p^{\M +1}$.
There exists a unique (left) $\mcD_\oslash^{(\M)}$-module structure
\begin{align}
\nabla_{\oslash, d}^{(\M)}  : \mcD_\oslash^{(\M)} \rightarrow \mcE nd_{k} (\mcO_\oslash)
\end{align}
on $\mcO_\oslash$ determined by the condition that $\nabla_{\oslash, d}^{(\M)} (\partial^{\langle j \rangle}) (t^n) = q_j ! \cdot \binom{n - \widetilde{d}}{j} \cdot t^n$ for every $j, n \in \mbZ_{\geq 0}$.
The resulting $\mcD_\oslash^{(\M)}$-module
\begin{align}
\msO_{\oslash, d}^{(\M)} := (\mcO_\oslash, \nabla_{\oslash, d}^{(\M)})
\end{align}
is isomorphic to the unique extension of $\msO_{\oslash, 0}^{(\M)}$ to $t^{-\widetilde{d}} \cdot \mcO_\oslash \left(\supseteq \mcO_\oslash \right)$.
In particular, $\nabla_{\oslash, d}^{(\M)}$ has vanishing $p^{\M +1}$-curvature.

\subsection{Local $\mcD$-module structure around   a non-smooth point I} \label{SS8}

Next, we  set  
\begin{align}
\widetilde{T} := \mr{Spec}(k[\![t]\!]),
\hspace{10mm} \widetilde{U}_\otimes := \mr{Spec}(k[\![x, y, t]\!]/(xy -t)),
\end{align}
 where $x$, $y$, and $t$ are formal parameters.
The $k$-algebra homomorphism $k [\![t]\!] \rightarrow k[\![x, y, t]\!]/(xy -t)$ given by $t \mapsto t$ determines a $k$-morphism $\widetilde{U}_\otimes \rightarrow \widetilde{T}$.

We equip  $\widetilde{T}$ (resp., $\widetilde{U}_\otimes$)
 with the log structure associated to the monoid morphism $\mbN \rightarrow \mcO_T$  (resp., $\mbN \oplus \mbN \rightarrow \mcO_{\widetilde{U}_\otimes}$)  given by $n \mapsto  t^n$ (resp., $(n, m) \mapsto x^n y^m$); we denote by $\widetilde{T}^\mr{log}$ (resp.,  $\widetilde{U}_\oslash^\mr{log}$)  the resulting log scheme.
The diagonal embedding  $\mbN \hookrightarrow \mbN \oplus \mbN$ enables us to extend the natural morphism $\widetilde{U}_\otimes \rightarrow \widetilde{T}$  to a (log smooth) morphism of log schemes $\widetilde{U}_\otimes^\mr{log} \rightarrow \widetilde{T}^\mr{log}$.

Denote by $T$ (resp., $U_\otimes$) the closed subscheme of $\widetilde{T}$ (resp., $\widetilde{U}_\otimes$) defined by $t =0$, which is equipped with the log structure pulled-back from $\widetilde{T}^\mr{log}$ (resp., $\widetilde{U}_\otimes^\mr{log}$); the resulting log scheme will be denoted by $T^\mr{log}$ (resp., $U_\otimes^\mr{log}$).
For simplicity, we shall write $\widetilde{\mcO}_\otimes := \mcO_{\widetilde{U}_\otimes}$.

Let ``$\breve{\ \ }$"
denote either the absence or presence of ``$\widetilde{\ \ }$". 
Then, we obtain the sheaf of noncommutative rings on $\breve{U}_\otimes$ defined as 
\begin{align} \label{dE91}
 \breve{\mcD}^{(\M)}_{\otimes}
   := \varprojlim_{n \geq 1} \mcD_{\breve{U}_{\otimes, n}^\mr{log}/\breve{T}^\mr{log}}^{(\M)},
 \end{align}
where  $\breve{U}_{\otimes, n}^\mr{log}$ ($n \geq 1$) denotes the strict closed subscheme of $\breve{U}^\mr{log}_{\otimes}$ defined by 
the ideal sheaf $\mfm^n$, where $\mfm$ denotes the  maximal ideal of $\breve{\mcO}_\otimes$.
This sheaf has two 
$\breve{\mcO}_\otimes$
-module structures ${^L}\breve{\mcD}_{\otimes}^{(\M)}$ and ${^R}\breve{\mcD}_{\otimes}^{(\M)}$ arising from  those of ${^L}\mcD_{\breve{U}_{\otimes, n}^\mr{log}/T^\mr{log}}^{(\M)}$'s and ${^R}\mcD_{\breve{U}_{\otimes, n}^\mr{log}/T^\mr{log}}^{(\M)}$'s (defined as in Section \ref{SS11}), respectively.

For $z \in \{x, y \}$,
let $\partial^{\langle j \rangle}_z$'s 
denote the sections of $\mcT_{\breve{U}_\otimes^\mr{log}/\breve{T}^\mr{log}} \left(\subseteq \breve{\mcD}^{(\M)}_\otimes \right)$ associated to the logarithmic coordinate $z$.
 Then,
the $\breve{\mcO}_\otimes$-module  ${^L}\breve{\mcD}_{\otimes}^{(\M)}$ can be decomposed as the direct sum 
$\bigoplus_{j \in \mbZ_{\geq 0}} \breve{\mcO}_{\otimes} \cdot \partial^{\langle j \rangle}_z$.
By an argument similar to the proof of ~\cite[Lemma 4.1]{Wak20}, we see that
for each positive  integer $j \leq p^\M$, the following equality holds:
\begin{align} \label{YY8}
\partial_{y}^{\langle j \rangle} = (-1)^j \cdot \sum_{j'=1}^j \binom{j-1}{j'-1} \cdot \partial_x^{\langle j'\rangle}.
\end{align}
In particular, for each nonnegative  integer $a\leq \M$,
 the following equalities hold:
\begin{align} \label{e444}
\partial_{y}^{\langle p^a \rangle} =   \sum_{j'=1}^{p^a} (-1)^{j'} \cdot \partial_x^{\langle j'\rangle} = - \partial_x^{\langle p^a \rangle} -1 + \prod_{b = 0}^{a-1} (1 -(\partial_x^{\langle p^b \rangle})^{p-1}),
\end{align}
where $\prod_{b =0}^{-1} (-) := 1$.

\subsection{Local $\mcD$-module structure around   a non-smooth point II} \label{SS44}

Note that the  inclusion $k[\![t, x, y]\!]/(xy -t) \hookrightarrow  k[\![t, x]\!][\frac{1}{x}]$ (identified with the inclusion $k[\![x, \frac{t}{x}]\!] \hookrightarrow k[\![t, x, \frac{1}{x}]\!]$) induces an open immersion
$o : W \left(:= \mr{Spec}( k[\![t, x]\!][\frac{1}{x}])\right)  \hookrightarrow \widetilde{U}_\otimes$.

For an element $d \in \mbZ/p^{\M +1}\mbZ$,
 let us consider the $\mcD_{W/\widetilde{T}}^{(\M)}$-module structure $\nabla_{W, d}^{(\M)} : \mcD_{W/\widetilde{T}}^{(\M)} \rightarrow  \mcE nd_{\mcO_{\widetilde{T}}} (\mcO_W)$ on $\mcO_W$ determined uniquely by  the condition that 
$\nabla_{W, d}^{(\M)} (\partial^{\langle j \rangle}_x) (x^n) = q_j ! \cdot \binom{n -\widetilde{d}}{j}\cdot x^n$;
this preserves the subsheaf $\widetilde{\mcO}_\otimes$ via the natural inclusion   $\widetilde{\mcO}_\otimes \subseteq o_* (\mcO_{W})$.
Hence, $\nabla_{W, d}^{(\M)}$ restricts to  
 a $\widetilde{\mcD}_{\otimes}^{(\M)}$-module  structure $\widetilde{\nabla}_{\otimes, d}^{(\M)}$ on $\widetilde{\mcO}_\otimes$.
Moreover, it induces, via reduction modulo $t$,  a $\mcD_{\otimes}^{(\M)}$-module  structure $\nabla_{\otimes, d}^{(\M)}$ on $\mcO_\otimes$, which has vanishing $p^{\M +1}$-curvature.
Thus, with  the notation ``$\breve{\ \  }$" as above, we obtain a $\breve{\mcD}_{\otimes}^{(\M)}$-module
\begin{align} \label{Eq90}
\breve{\msO}_{\otimes, d}^{(\M)} := (\breve{\mcO}_\otimes, \breve{\nabla}_{\otimes, d}^{(\M)}).
\end{align}
It is immediately verified that  $\msO_{\otimes, d}^{(\M)}$ coincides with ``$\msO_{\otimes, d}^{(\M)}$" introduced in ~\cite[Section 4.3]{Wak20}.

The $(\M +1)$-st  Frobenius twist $\widetilde{U}_\otimes^{(\M +1)}$ of $\widetilde{U}_\otimes$ over $\widetilde{T}$ can be identified with the affine scheme $\mr{Spec}(k [\![t, x^{p^{\M +1}}, y^{p^{\M+1}}]\!]/(x^{p^{\M +1}}y^{p^{\M+1}}-t^{p^{\M+1}}))$ over $k$, and the $(\M +1)$-st relative Frobenius morphism $\widetilde{U}_\otimes \rightarrow \widetilde{U}_\otimes^{(\M +1)}$
corresponds to  the  injective  $k$-algebra homomorphism
\begin{align}
k [\![t, x^{p^{\M +1}}, y^{p^{\M+1}}]\!]/(x^{p^{\M +1}}y^{p^{\M+1}}-t^{p^{\M+1}}) \rightarrow k[\![t, x, y ]\!]/(xy -t).
\end{align}
For simplicity, we write $\widetilde{\mcO}_\otimes^{(\M +1)} := \mcO_{\widetilde{U}^{(\M +1)}_\otimes}$. 
The sheaf $\mcS ol (\nabla)$ associated to a $\widetilde{\mcD}_\otimes^{(\M)}$-module $(\mcF, \nabla)$ (as in Section \ref{SS11}) can be regarded as an $\widetilde{\mcO}_\otimes^{(\M)}$-module via the $(\M +1)$-st relative Frobenius morphism $\widetilde{U}_\otimes \rightarrow \widetilde{U}_\otimes^{(\M +1)}$.

Note that we have an identification
\begin{align} \label{Eq88}
H^0 (\widetilde{U}_\otimes, \widetilde{\mcO}_\otimes) = \bigoplus_{s=0}^\infty \left( k \cdot t^s \oplus \left(\bigoplus_{a = 1}^\infty  k \cdot t^s x^a  \right) \oplus \left(\bigoplus_{b = 1}^\infty k \cdot t^s y^b \right) \right),
 \end{align}
 which restricts to  an identification
 \begin{align} \label{Eq97}
 H^0 (\widetilde{U}_\otimes^{(\M)}, \widetilde{\mcO}^{(\M)}_\otimes) = \bigoplus_{s=0}^\infty \left( k \cdot t^s \oplus \left(\bigoplus_{a = 1}^\infty  k \cdot t^{s} x^{p^{\M +1} \cdot  a}  \right) \oplus \left(\bigoplus_{b = 1}^\infty k \cdot t^s y^{p^{\M +1} \cdot b} \right) \right).
 \end{align}

The following two propositions generalize  ~\cite[Propositions 4.6 and 4.7]{Wak20}.

\bpr \label{Pro1}
Let us keep the above notation, and let $d \in \mbZ/p^{\M+1}\mbZ$.
Also, we write $c := -d$.
Then, the following assertions hold:
\begin{itemize}
\item[(i)]
Under the identification \eqref{Eq88},
the following equality holds:
\begin{align}
H^0(\widetilde{U}_\otimes, \mcS ol (\widetilde{\nabla}^{(\M)}_{\otimes, d})) = 
\begin{cases}  \bigoplus\limits_{s=0}^\infty \left(k \cdot t^s \oplus\left(\bigoplus\limits_{a = 1}^\infty  k \cdot t^{s} x^{p^{\M +1} \cdot  a}  \right) \oplus \left(\bigoplus\limits_{b = 1}^\infty k \cdot t^s y^{p^{\M +1} \cdot b} \right) \right) 
  & \text{if $d = 0$}; \\
\bigoplus\limits_{s=0}^\infty \left( \left( \bigoplus\limits_{a =0}^\infty k \cdot t^s x^{\widetilde{d} + p^{\M +1}\cdot a}
\right) \oplus \left( \bigoplus\limits_{b= 0}^\infty
 k \cdot t^s y^{\widetilde{c} + p^{\M +1}\cdot b}\right)\right) & \text{if $d \neq 0$}.
 \end{cases}
\end{align}
\item[(ii)]
For simplicity, we write $F := F^{(\M +1)}_{\widetilde{U}_\otimes/\widetilde{T}}$.
Let us consider the morphism
\begin{align}
\kappa_d : \left( \widetilde{\mcO}_\otimes \otimes_{F^{-1}(\widetilde{\mcO}_\otimes^{(\M)})} F^{-1}(\mcS ol (\widetilde{\nabla}_{\otimes, d}^{(\M)}))= \right) F^*(\mcS ol (\widetilde{\nabla}_{\otimes, d}^{(\M)})) \rightarrow \widetilde{\mcO}_\otimes,
\end{align}
obtained by extending  the inclusion $\mcS ol (\widetilde{\nabla}^{(\M)}_{\otimes, d}) \hookrightarrow \widetilde{\mcO}_\otimes$.
Then,
in the case  $d = 0$, 
 the equalities  
 \begin{align} \label{EQ400}
 H^0 (\widetilde{U}_\otimes, \mr{Ker}(\kappa_d)) = H^0 (\widetilde{U}_\otimes, \mr{Coker}(\kappa_d)) = 0
 \end{align}
  hold.
On the other hand, if  $d \neq 0$, then we have
\begin{align}
& \ \ \ \ H^0 (\widetilde{U}_\otimes, \mr{Ker}(\kappa_d)) \\
& =\bigoplus\limits_{s=0}^{\mr{min}  \left\{ \widetilde{d}, \widetilde{c}\right\} -1} 
 \left(\bigoplus\limits_{a=0}^{\widetilde{c}-s-1} k \cdot t^s  (x^{\widetilde{d}+a} \otimes y^{\widetilde{c}} - t^a  y^{\widetilde{c}-a} \otimes x^{\widetilde{d}}) \oplus \bigoplus\limits_{b=1}^{\widetilde{d}-s-1} k \cdot t^s (t^b x^{\widetilde{d}-b} \otimes y^{\widetilde{c}} -  y^{\widetilde{c}+b} \otimes x^{\widetilde{d}})\right),
\end{align}
and
\begin{align}
H^0 (\widetilde{U}_\otimes, \mr{Coker}(\kappa_d))
\cong
 \bigoplus\limits_{s= 0}^{\mr{min}\{\widetilde{d}, \widetilde{c} \} -1} \left( k \cdot t^s \oplus\left( \bigoplus\limits_{a= 1}^{\widetilde{d}-s -1} k \cdot t^s x^a \right) \oplus \left( \bigoplus\limits_{b= 1}^{\widetilde{c} - s -1} k \cdot t^s y^b\right) \right).
\end{align}
In particular, we obtain the following equalities:
\begin{align}
\mr{dim}_k(H^0 (\widetilde{U}_\otimes, \mr{Ker}(\kappa_d)))  =\mr{dim}_k H^0(\widetilde{U}_\otimes, \mr{Coker}(\kappa_d))) &=  \begin{cases} 0& \text{if $d = 0$};  \\
\widetilde{d} \cdot \widetilde{c} \left(= \widetilde{d} (p^{\M +1} - \widetilde{d}) \right) &  \text{if $d\neq 0$}. \end{cases}\notag
\end{align}
\end{itemize}
\epr
\begin{proof}
For each $n$, $j \in \mbZ_{\geq 0}$, we have 
\begin{align}
\widetilde{\nabla}_{\otimes, d}^{(m)} (\partial_x^{\langle j \rangle}) (y^n) 
& = (-1)^j \cdot \sum_{j' =1}^j \binom{j-1}{j' -1} \cdot \widetilde{\nabla}_{\otimes, d}^{(m)} (\partial_y^{\langle j' \rangle}) (y^n)  \\
& = (-1)^j \cdot \sum_{j' =1}^j \binom{j-1}{j' -1} \cdot q_{j'}! \cdot \binom{n- \widetilde{c}}{j'} \cdot y^n, \notag
\end{align}
where the first equality follows from \eqref{YY8}.
It follows that $\widetilde{\nabla}_{\otimes, d}^{(m)} (\partial_x^{\langle j \rangle}) (y^n) = 0$ for every $j$ and $n$ if and only if $c = 0$ (or equivalently, $d = 0$).
Hence, assertion (i) follows from the definition of $\widetilde{\nabla}_{\otimes, d}^{(\M)}$ together with  a straightforward calculation. 

Moreover, the various computations in assertion (ii) can be immediately carried out    by applying  assertion (i), so the details are left to reader.
\end{proof}

\bpr \label{Prop233}
Let $d$ and $c$ be elements of $\mbZ/p^{\M +1}\mbZ$.
Then, the following assertions hold.
\begin{itemize}
\item[(i)]
The canonical isomorphism $\widetilde{\mcO}_\otimes \otimes_{\widetilde{\mcO}_\otimes} \widetilde{\mcO}_\otimes \xrightarrow{\sim} \widetilde{\mcO}_\otimes$ defines an isomorphism of $\widetilde{\mcD}_\otimes^{(\M)}$-modules
\begin{align}
\widetilde{\msO}_{\otimes, d}^{(\M)} \otimes \widetilde{\msO}^{(\M)}_{\otimes, c}
\xrightarrow{\sim} \widetilde{\msO}^{(\M)}_{\otimes, d + c}.
 \end{align}
In particular,  the dual $(\widetilde{\msO}^{(\M)}_{\otimes, d})^\vee$ of $\widetilde{\msO}^{(\M)}_{\otimes, d}$ is isomorphic to $\widetilde{\msO}_{\otimes, -d}^{(\M)}$.
\item[(ii)]
The following equality holds:
\begin{align} \label{Eq95}
\mr{Hom} (\widetilde{\msO}^{(\M)}_{\otimes, d}, \widetilde{\msO}^{(\M)}_{\otimes, c}) = \left\{ \mr{mult}_s \, \Big| \, s \in H^0 (\widetilde{U}_\otimes, \mcS ol (\widetilde{\nabla}_{\otimes, c-d})) \right\},
\end{align}
where $\mr{mult}_s$ denotes the endomorphism of $\widetilde{\mcO}_\otimes$ given by multiplication by $s$.
In particular, there exists
a surjective morphism $\widetilde{\msO}^{(\M)}_{\otimes, d} \twoheadrightarrow \widetilde{\msO}^{(\M)}_{\otimes, c}$ if and only if the equality $d = c$ holds.
\end{itemize}
\epr
\begin{proof}
The assertions follow from various definitions involved.
\end{proof}

\subsection{Local $\mcD$-module structure around   a non-smooth  point III} \label{SS45}
The following assertion provides a local description of $\mcD^{(\M)}$-modules with vanishing $p^{\M +1}$-curvature around a non-smooth point of a family of pointed stable curves.

\bpr \label{Pro2}
Let $(\mcF, \nabla)$ be a  $\widetilde{\mcD}_\otimes^{(\M)}$-module with vanishing $p^{\M +1}$-curvature such that $\mcF$ is a free $\widetilde{\mcO}_\otimes$-module of rank $n>0$.
Then, there exists an isomorphism of $\widetilde{\mcD}_\otimes^{(\M)}$-modules
\begin{align}
\bigoplus_{i=1}^n \widetilde{\msO}_{\otimes, d_i}^{(\M)} \xrightarrow{\sim} (\mcF, \nabla)
\end{align}
for some $d_1, \cdots, d_n \in \mbZ/p^{\M +1} \mbZ$.
Moreover, the resulting multiset
\begin{align}
e (\nabla) := [d_1, \cdots, d_n]
\end{align}
depends only on the isomorphism class of $(\mcF, \nabla)$.
\epr
\begin{proof}
Denote by $(\mcF_0, \nabla_0)$ the reduction modulo $t$  of $(\mcF, \nabla)$, which specifies a $\mcD_\otimes^{(\M)}$-module with vanishing $p^{\M +1}$-curvature.
According to ~\cite[Proposition-Definition 4.8]{Wak20}, there exists an isomorphism of $\mcD_\otimes^{(\M)}$-modules
\begin{align}
\xi : \bigoplus_{i=1}^n \msO_{\otimes, d_i}^{(\M)} \xrightarrow{\sim} (\mcF_0, \nabla_0)
\end{align}
for some $d_1, \cdots, d_n \in \mbZ/p^{\M +1} \mbZ$.

Let us choose $j \in \{ 1, \cdots, n \}$.
The tensor product of $\xi$ and the identity morphism of $\msO^{(\M)}_{\otimes, -d_j}$ gives an isomorphism
\begin{align}
\xi_j : \bigoplus_{i=1}^n \msO_{\otimes, d_i-d_j}^{(\M)} \xrightarrow{\sim} (\mcF_0, \nabla_0) \otimes \msO_{\otimes, -d_j}^{(\M)} \left(= (\mcF_0, \nabla_0 \otimes \nabla_{\otimes, -d_j}) \right).
\end{align}
If $e_j$ denotes the image of $1 \in \mcO_\otimes$ via the inclusion into the $j$-th factor $\mcO_\otimes \hookrightarrow \mcO_\otimes^{\oplus n}$,
then it is a horizontal section in the domain of $\xi_j$.
In particular, we  have $\xi_j (e_j) \in \mcS ol (\nabla_0 \otimes \nabla_{\otimes, -d_j})$.
Since the natural morphism $\mcS ol (\nabla \otimes \widetilde{\nabla}_{\otimes, -d_j}) \rightarrow \mcS ol (\nabla_0 \otimes \nabla_{\otimes, -d_j})$ is surjective by ~\cite[Proposition 2.15, (ii)]{Wak20},
we can find a section $v_j \in \mcS ol (\nabla \otimes \widetilde{\nabla}_{\otimes, -d_j})$ mapped to $\xi_j (e_j)$ via this surjection.
The section $v_j$ determines a morphism of $\widetilde{\mcD}_{\otimes}^{(\M)}$-modules $ \widetilde{\msO}_{\otimes, 0}^{(\M)} \rightarrow (\mcF, \nabla \otimes \widetilde{\nabla}_{\otimes, -d_j}^{(\M)})$.
The tensor product of this morphism and the identity morphism of $\widetilde{\msO}^{(\M)}_{\otimes, d_j}$ specifies a morphism $\zeta_j : \widetilde{\msO}_{\otimes, d_j}^{(\M)} \rightarrow (\mcF, \nabla)$.
Thus, we obtain a morphism of $\widetilde{\mcD}_\otimes^{(\M)}$-modules
\begin{align}
\zeta := \bigoplus_{j=1}^n \zeta_j :  \bigoplus_{j=1}^n \widetilde{\mcO}^{(\M)}_{\otimes, d_j} \rightarrow  (\mcF, \nabla).
\end{align}

The reduction modulo  $t$ of this morphism coincides with $\xi$, so it turns out to be an isomorphism by Nakayama's lemma.
This proves the first assertion.

Moreover, the second assertion follows immediately from Proposition \ref{Prop233}, (ii),  and thus we have finished the proof.
\end{proof}

\vspace{10mm}
\section{Moduli space of dormant $\mr{PGL}_2^{(\N)}$-opers} \label{S5}

This section discusses  dormant $\mr{PGL}_2^{(\N)}$-opers (or more generally,  dormant $\mr{PGL}_n^{(\N)}$-opers, for a general $n$) and their moduli stack.
For a detailed treatment of this subject, we refer the reader to   ~\cite{Wak20}.
In the special cases $(g, r) = (0, 4)$ and $(1, 1)$,
we explicitly describe the boundary locus of the moduli stack in terms of integers satisfying certain triangle inequalities (cf. Theorem \ref{Th3}).

\subsection{Dormant $\mr{PGL}_n^{(\N)}$-opers} \label{SS30}

Let $S$ be   a scheme  over $k$,  and
$\msX := (f: X \rightarrow S, \{ \sigma_i \}_{i=1}^r)$ an $r$-pointed stable curve  of genus $g$ over $S$, as in Sections \ref{SS1}-\ref{SS11}.
Also, 
let $\N$ and $n$ be positive integers with $1 <n < p$.

Consider a pair $\msE^\spadesuit := (\mcE_B, \phi)$ consisting of 
a $B$-bundle  $\mcE_B$ on $X$ and an $(\N -1)$-PD stratification on $\mcE := \mcE_B \times^B \mr{PGL}_n$.
 (The precise definition of a  higher-level PD stratification  on a $G$-bundle will not be recalled here, as 
the discussion below will only concern its translation into  a $\mcD^{(\N-1)}$-action on a vector bundle.
For full details, see   ~\cite[Definition 2.3]{Wak20}.)
Denote by $\nabla_\phi$  the $S^\mr{log}$-connection on $\mcE$  corresponding to the $0$-PD stratification induced by  $\phi$ (cf. ~\cite[Definition 1.17]{Wak5} for the definition of an $S^\mr{log}$-connection).

\bde[cf. ~\cite{Wak20}, Definitions 5.2, 5.3] \label{Def3}
\begin{itemize}
\item[(i)]
We say that $\msE^\spadesuit$ is a {\bf $\mr{PGL}_n^{(\N)}$-oper}  (or a {\bf $\mr{PGL}_n$-oper of level $\N$})
on $\msX$ if 
the pair $(\mcE_B, \nabla_\phi)$ forms a $\mr{PGL}_n$-oper (in other words,  an $\mfs \mfl_n$-oper) on $\msX$, in the sense of ~\cite[Definition 2.1]{Wak5}.
One can define the notion of an isomorphism between two   $\mr{PGL}_n^{(\N)}$-opers in a natural manner.
\item[(ii)]
 A $\mr{PGL}_n^{(\N)}$-oper $\msE^\spadesuit := (\mcE_B, \phi)$ is said to be {\bf dormant} if
  the $p^\N$-curvature of $\phi$ (in the sense of ~\cite[Definition 2.10, (ii)]{Wak20}) vanishes identically.
\end{itemize}
\ede

Let us recall a description of 
 $\mr{PGL}_n^{(\N)}$-opers in terms of $\mcD^{(\N -1)}$-modules.
An {\bf $n^{(N)}$-theta characteristic} of $\msX$ is defined as a pair
$\vartheta := (\varTheta, \nabla_\vartheta)$
consisting of a line bundle $\varTheta$ on $X$ and a $\mcD^{(N -1)}$-module structure $\nabla_\vartheta$ on the line bundle $\mcT^{\otimes \frac{n (n-1)}{2}} \otimes \varTheta^{\otimes n}$ (cf. ~\cite[Definition 5.12]{Wak20}).
We say that  an $n^{(N)}$-theta characteristic $\vartheta := (\varTheta, \nabla_\vartheta)$ is   {\bf dormant} if 
$\nabla_\vartheta$ has vanishing $p^\N$-curvature.

We fix  a dormant $n^{(N)}$-theta characteristic $\vartheta := (\varTheta, \nabla_\vartheta)$ of $\msX$.
(According to ~\cite[Proposition 5.14]{Wak20}, such an object always exists.)
We set
\begin{align}
\mcF_\varTheta := \mcD^{(\N -1)}_{\leq n-1} \otimes \varTheta, \hspace{5mm} \mcF_\varTheta^j := \mcD^{(\N -1)}_{\leq n  -j-1} \otimes \varTheta
 \ (j=0, \cdots, n).
\end{align}
Note   that these vector bundles do not depend on the level $\N$ because of the assumption $n < p$.
Since $\mcF_\varTheta^{j}/\mcF_\varTheta^{j+1}$ ($j=0, \cdots, n-1$)
can be identified with $\mcT^{\otimes (n-j-1)} \otimes \varTheta$, we obtain a composite isomorphism
\begin{align}
\mr{det}(\mcF_\varTheta) \xrightarrow{\sim} \bigotimes_{j=0}^{n-1} \mcF_\varTheta^j/\mcF_\varTheta^{j+1} \xrightarrow{\sim}
\bigotimes_{j=0}^{n-1} (\mcT^{\otimes (n-j-1)} \otimes \varTheta)
\xrightarrow{\sim} \mcT^{\otimes \frac{n (n-1)}{2}} \otimes \varTheta^{\otimes n}.
\end{align}

\bde[cf. ~\cite{Wak20}, Definition 5.15] \label{Def8}
A {\bf  $(\mr{GL}_n^{(N)}, \vartheta)$-oper} on $\msX$ is a $\mcD^{(N-1)}$-module structure $\nabla^\diamondsuit$ on $\mcF_\varTheta$   such that, for every $j=0, \cdots, n-1$,
the $\mcO_X$-linear morphism $\mcD^{(\N -1)} \otimes \mcF_\varTheta \rightarrow \mcF_\varTheta$ induced by $\nabla^\diamondsuit$ restricts to an isomorphism
\begin{align} \label{EQ200}
\mcD_{\leq n-j-1}^{(\N -1)} \otimes \mcF_\varTheta^{n-1} \xrightarrow{\sim} \mcF_\varTheta^j.
\end{align}
We say that  a  $(\mr{GL}_n^{(\N)}, \vartheta)$-oper $\nabla^\diamondsuit$ is  {\bf dormant} if its $p^\N$-curvature ${^p}\psi_{(\mcF_\varTheta, \nabla^\diamondsuit)}$ vanishes identically.
  \ede

 Let $\nabla^\diamondsuit$ be a  dormant  $(\mr{GL}_n^{(\N)}, \vartheta)$-oper on $\msX$.
   This  induces an $(\N -1)$-PD stratification $\phi_{\nabla^\diamondsuit}$ on the $\mr{PGL}_n$-bundle $\mcE$ obtained by projectivizing $\mcF_\varTheta$.
 Also, the filtration $\{ \mcF_\varTheta^j \}_{j=0}^n$
  defines  a $B$-reduction $\mcE_B$ of $\mcE$.
   Since 
 the morphisms \eqref{EQ200} associated to  the  various $j$'s  are isomorphisms, 
  the  pair
 \begin{align}
 \nabla^{\diamondsuit \Rightarrow \spadesuit} := (\mcE_B, \phi_{\nabla^\diamondsuit})
 \end{align}
turns out to  form  a  dormant $\mr{PGL}_n^{(\N)}$-oper.
 The resulting assignment $\nabla^\diamondsuit \mapsto \nabla^{\diamondsuit \Rightarrow \spadesuit}$ determines a well-defined bijection of sets 
\begin{align} \label{Eq107w}
\left(\begin{matrix}\text{the set of isomorphism classes of} \\ \text{dormant $(\mr{GL}_n^{(\N)}, \vartheta)$-opers on $\msX$} \end{matrix} \right)
\xrightarrow{\sim}
\left(\begin{matrix}\text{the set of isomorphism classes of} \\ \text{dormant $\mr{PGL}_n^{(\N)}$-opers on $\msX$} \end{matrix} \right)
\end{align}
(cf. ~\cite[Theorem 5.18]{Wak20}).

\subsection{Radii of dormant $\mr{PGL}_2^{(\N)}$-opers} \label{SS43}

Let us fix $i \in \{ 1, \cdots, r \}$, and take an open subscheme $U$ of $X$ meeting $\mr{Im}(\sigma_i)$, and take a section $t \in \mcO_X$ on $U$ defining the closed subscheme $\mr{Im} (\sigma_i) \cap U$ of $U$.
The $t$-adic completion  $\widehat{U}_t$ of $U$ can  be identified with $U_\oslash$.
Under this identification $\widehat{U}_t = U_\oslash$,
the restriction $\mcD^{(\N -1)} |_{\widehat{U}_t}$ of $\mcD^{(\N -1)}$ to $\widehat{U}_t$ can be identified with  $\mcD_\oslash^{(\N -1)}$.

Let $(\mcF, \nabla)$ be a $\mcD^{(\N -1)}$-module such that $\mcF$ is a rank $n$ vector bundle and $\nabla$ has vanishing $p^\N$-curvature.
Suppose that $S$ is connected.
According to ~\cite[Proposition-Definitions 4.8 and 4.13]{Wak20},
there exists  a well-defined multiset 
\begin{align}
e_i (\nabla) := [d_1, \cdots, d_n]
\end{align}
of elements in $\mbZ/p^\N \mbZ$ satisfying the following condition:
for any local section $t$ as above, the $\mcD_\oslash^{(\N -1)}$-module $(\mcF, \nabla) |_{\widehat{U}_t}$ obtained by restricting $(\mcF, \nabla)$ to $\widehat{U}_t \left(= U_\oslash \right)$ is isomorphic to $\bigoplus_{i=1}^n \msO_{\oslash, d_i}^{(\N -1)}$.
We refer to $e_i (\nabla)$ as the {\bf exponent} of $\nabla$ at $\sigma_i$ (cf. ~\cite[Definition 6.2]{Wak20}).

Next, we specialize our discussion  to the case $n =2$.
Denote by 
$(\mbZ/p^\N \mbZ)^\times /\{ \pm 1 \}$ the set of equivalence classes of elements $a \in \mbZ/p^\N \mbZ$, in which $a$ and $-a$ are identified.

Let 
$\msE^\spadesuit$ be a dormant $\mr{PGL}_2^{(\N)}$-oper on $\msX$.
This corresponds to 
a dormant $(\mr{GL}_2^{(\N)}, \vartheta)$-oper  $\nabla^\diamondsuit$
via \eqref{Eq107w}.
If 
 $e_i (\nabla^\diamondsuit) := [d_{i, 1}, d_{i, 2}]$ denotes  the exponent of  $\nabla^\diamondsuit$ at $\sigma_i$,
 then 
 the difference $d_{i, 1} -d_{i, 2}$ lies in  $(\mbZ/p^\N \mbZ)^\times$ (cf. ~\cite[Proposition 6.14]{Wak20}), and hence 
 we obtain an element
\begin{align}
\rho_i (\msE^\spadesuit) := \left(\frac{d_{i, 1}-d_{i, 2}}{2} \right) \in (\mbZ/p^\N \mbZ)^\times /\{ \pm 1 \}.
\end{align}
This element  does not depend on the choice of $\vartheta$.
We shall refer to $\rho_i (\msE^\spadesuit)$ as the {\bf radius} of $\msE^\spadesuit$ at $\sigma_i$ (cf. ~\cite[Definition 6.11]{Wak20}).
Also, for $\rho := (\rho_i)_{i=1}^r \in ((\mbZ/p^\N \mbZ)^\times /\{ \pm \})^r$, we say that $\msE^\spadesuit$ is {\bf of radii $\rho$}
if $\rho_i = \rho_i (\msE^\spadesuit)$ for every $i=1, \cdots, r$.
When $r =0$,
any dormant $\mr{PGL}_2^{(\N)}$-oper is said to be {\bf of radii $\emptyset$}.

\subsection{Dormant modular curves} \label{SS27}

For an element $\rho \in ((\mbZ/p^\N \mbZ)^\times /\{ \pm 1 \})^{r}$, we shall denote by
\begin{align} \label{EQQ100}
\mcO p_{\N, \rho, g, r}^{^\mr{Zzz...}}
\end{align}
the moduli stack classifying pairs $(\msX, \msE^\spadesuit)$ consisting of an $r$-pointed stable curve of genus $g$ over $k$ and a dormant $\mr{PGL}_2^{(\N)}$-oper $\msE^\spadesuit$ on $\msX$ of radii $\rho$.
It follows from  ~\cite[Theorems B and C]{Wak20} and  ~\cite[Theorem A]{Wak80} that this stack can  be represented by a (possibly empty) geometrically connected, smooth, and proper Deligne-Mumford stack  over $k$ of dimension $3g-3 +r$.
Moreover, 
the projection
\begin{align} \label{Eq34}
\Pi_{N, \rho, g, r} : \mcO p^{^\mr{Zzz...}}_{\N, \rho, g, r} \rightarrow \overline{\mcM}_{g, r}
\end{align}
 obtained by forgetting the data of dormant $\mr{PGL}_2$-opers is finite, faithfully flat,  and \'{e}tale over the points of $\overline{\mcM}_{g, r}$ classifying totally degenerate curves (cf.  ~\cite[Definition 6.24]{Wak20} for the definition of a totally degenerate curve).
In particular, when $(g, r)$ is either $(0, 4)$ or $(1, 1)$,
the Deligne-Mumford stack $\mcO p^{^\mr{Zzz...}}_{\N, \rho, g, r}$  is of dimension $1$.

\bde \label{Def891}
We shall refer to $\mcO p_{\N, \rho, 0, 4}^{^\mr{Zzz...}}$  (resp., $\mcO p_{\N, \rho, 1, 1}^{^\mr{Zzz...}}$) as the {\bf dormant modular curve of type $(\N, \rho, 0, 4)$} (resp., {\bf  of type $(\N, \rho, 1, 1)$}).
\ede

Next, 
let $\rho := (\rho_i)_{i=1}^r$ be an element of $(\mbZ_p^{\times}/\{ \pm 1 \})^r$, where $\mbZ_p := \varprojlim_{\N} \mbZ /p^\N \mbZ$.
For each $\N \in \mbZ_{> 0}$, we denote by $\rho_{i, \N}$ the image of $\rho_i$ via  the natural projection $\mbZ_p^{\times}/\{ \pm 1 \} \rightarrow  (\mbZ/p^\N \mbZ)^{\times}/\{ \pm 1 \}$ and write $\rho_\N := (\rho_{i, \N})_{i=1}^r$.
Then, 
reducing the level of dormant $\mr{PGL}_2^{(\N)}$-opers (for various $\N$'s) yields
 a projective system
\begin{align} \label{Eq12}
\cdots \rightarrow \mcO p^{^\mr{Zzz...}}_{N, \rho_\N, g, r} \rightarrow \cdots \rightarrow  \mcO p^{^\mr{Zzz...}}_{2, \rho_2, g, r} \rightarrow  \mcO p^{^\mr{Zzz...}}_{1, \rho_1, g, r}
\end{align}
each of whose morphisms is 
finite, faithfully flat, and generically \'{e}tale.

\subsection{Combinatorial description of the divisor at infinity} \label{SS39}

We set 
\begin{align} \label{Eq11722}
B_\N := \left\{a\in \mbZ\, \Biggl| \,  0\leq a \leq \frac{p^\N -3}{2}, a \not\equiv \frac{p-1}{2} \ (\mr{mod}\, p) \right\}.
\end{align}
The 
assignment $s \mapsto \overline{\left( \frac{2s +1}{2}\right)}$ determines a bijection
\begin{align}  \label{Eq114}
\delta_\N \left(\text{or simply} \ \delta \right) : B_\N \xrightarrow{\sim} (\mbZ/p^\N \mbZ)^\times /\{ \pm 1 \}.
\end{align}

Let us take a quadruple $\lambda := (\lambda_1, \lambda_2, \lambda_3, \lambda_4)$ of elements of $B_\N$.
Denote by 
$B_{\N, \lambda, 0, 4}$
 the subset of $B_\N$ consisting of elements $\eta$ satisfying the following two conditions:
\begin{itemize}
\item
$|\lambda_1 - \lambda_2|\leq \eta \leq \mr{min} \left\{ \lambda_1 + \lambda_2,  p^\N -2 -\lambda_1 - \lambda_2\right\}$;
\item
$ |\lambda_3 - \lambda_4 |\leq \eta \leq \mr{min} \left\{ \lambda_3 + \lambda_4, p^\N -2 -\lambda_3 - \lambda_4\right\}$;
\item
$| [\lambda_1]_{\N'} - [\lambda_2]_{\N'}| \leq [\eta]_{\N'} \leq \mr{min} \left\{ [\lambda_1]_{\N'} + [\lambda_2]_{\N'},  p^{\N'} -2 -[\lambda_1]_{\N'} - [\lambda_2]_{\N'} \right\}$ for any positive integer  $\N' < \N$;
\item
$|[\lambda_3]_{\N'} - [\lambda_4]_{\N'} |\leq [\eta]_{\N'} \leq \mr{min} \left\{  [\lambda_3]_{\N'} + [\lambda_4]_{\N'},  p^{\N'} -2 -[\lambda_3]_{\N'}  -[\lambda_4]_{\N'}\right\}$ for any positive integer  $\N' < \N$.
\end{itemize}
Here, 
for each positive integer $M$ and a nonnegative integer $a$, we set
\begin{align}
[a]_M := \frac{p^M -1}{2} - \left| a - p^M \cdot \left\lfloor \frac{a}{p^M}\right\rfloor -\frac{p^M -1}{2}\right|.
\end{align}
Given a quadruple $\rho := (\rho_i)_{i=1}^4 \in ((\mbZ/p^\N \mbZ)^\times /\{ \pm 1 \})^4$, 
we set
\begin{align}
C_{\N, \rho, 0, 4}^0 &:= \delta (B_{\N, (\lambda_1, \lambda_4, \lambda_2, \lambda_3), 0, 4}), \\
C_{\N, \rho, 0, 4}^1 &:= \delta (B_{\N, (\lambda_1, \lambda_3, \lambda_2, \lambda_4), 0, 4}), \notag \\
\hspace{5mm}
C_{\N, \rho, 0, 4}^\infty &:= \delta (B_{\N, (\lambda_1, \lambda_2, \lambda_3, \lambda_4), 0, 4}), \notag
\end{align}
 where $\lambda_i := \delta^{-1}(\rho_i)$.
 The disjoint union of $C_{\N, \rho, 0, 4}^q$'s ($q \in \{0,1, \infty \}$) is denoted by $C_{\N, \rho, 0, 4}$, i.e., we have 
  \begin{align} \label{EQQ103}
C_{\N, \rho, 0, 4} := \left\{ (q, \lambda) \, \big| \, q \in \{0,1, \infty \}, \lambda \in C_{\N, \rho, 0, 4}^q\right\}.
\end{align}

Next, for  $\lambda \in B_\N$, 
 we shall set $B_{\N, \lambda, 1, 1}$ to be the subset of $B_{\N}$ consisting of elements $\eta$ satisfying the following two conditions:
\begin{itemize}
\item
$0\leq \eta \leq \mr{min} \left\{ 2 \lambda, p^\N -2 \lambda -2 \right\}$;
\item
$0\leq [\eta]_{\N'} \leq \mr{min} \left\{ 2 [\lambda]_{\N'}, p^{\N'} -2 [\lambda]_{\N'} -2 \right\}$ for any positive integer  $\N' < \N$.
\end{itemize}
For an element $\rho$ of $(\mbZ/p^\N \mbZ)^\times /\{ \pm 1\}$,  we obtain a set 
\begin{align}
C_{\N, \rho, 1, 1} := \delta (B_{\N, \rho, 1, 1}) \left(\subseteq (\mbZ/p^\N \mbZ)^\times/\{ \pm 1\} \right),
\end{align}
where $\lambda := \delta^{-1} (\rho)$.

Then, we can prove the following assertion.

\bt \label{Th3}
Suppose that $k$ is algebraically closed, and that  $(g, r) = (1, 1)$ or $(0, 4)$.
Let $\rho$ be an element of $((\mbZ/p^\N \mbZ)^\times /\{ \pm 1\})^r$.
Then, the following assertions hold.
\begin{itemize}
\item[(i)]
We set $\partial \mcO p^{^\mr{Zzz...}}_{\N, \rho, g, r} := \Pi_{\N, \rho, g, r}^{-1} (\partial \overline{\mcM}_{g, r})$, which forms a $0$-dimensional reduced stack  over $k$.
Then, there exists a canonical  bijection of sets
\begin{align} \label{Eq104}
\partial \mcO p^{^\mr{Zzz...}}_{\N, \rho, g, r} (k) \xrightarrow{\sim} C_{\N, \rho, g, r}.
\end{align}
\item[(ii)]
The stack $\mcO p_{\N, \rho, g, r}^{^\mr{Zzz...}}$ is nonempty if and only if
$C_{\N, \rho, g, r} \neq \emptyset$.
Moreover, if $\mcO p_{\N, \rho, g, r}^{^\mr{Zzz...}} \neq \emptyset$ and $(g, r) = (0, 4)$ (resp., and $(g, r) = (1, 1)$), then  the degree of the projection 
$\Pi_{\N, \rho, g, r}$
 coincides with $ \sharp (C^q_{\N, \rho, g, r})$ for any $q \in \{0,1,\infty \}$ (resp., $\sharp (C_{\N, \rho, g, r})$).
 In particular, when $(g, r) = (0, 4)$, we have 
\begin{align}
\frac{1}{3} \cdot \sharp (C_{\N, \rho, 0, 4}) = \sharp (C^0_{\N, \rho, 0, 4})  = \sharp (C^1_{\N, \rho, 0, 4}) = \sharp (C^\infty_{\N, \rho, 0, 4}).
\end{align}
 \end{itemize}
\et
\begin{proof}
For each pointed curve $\msX$ classified by $\partial \overline{\mcM}_{g, r} (k)$,
we denote by $\mr{Op}_{\N, \rho, \msX}^{^\mr{Zzz...}}$ the set of isomorphism classes of dormant $\mr{PGL}_2^{(\N)}$-opers on $\msX$ of radii $\rho$.

First, we shall prove assertion (i).
Note that 
 $\partial \mcO p^{^\mr{Zzz...}}_{\N, \rho, g, r} (k)$ is in bijection with the set of  dormant $\mr{PGL}_2^{(\N)}$-opers of radii $\rho$ on $r$-pointed totally degenerate curves over $k$ of genus $g$.
Let $\mbG := (G, \{ \lambda_j \}_{j=1}^r)$ denote  
 the trivalent clutching data of type $(g, r)$ (in the sense of ~\cite[Definition 6.6]{Wak20}),
   where $G := (V, E, \zeta)$ denotes the underlying semi-graph,  defined as follows:
\begin{itemize}
 \item
 Case of $(g, r) = (0, 4)$:
 \begin{itemize}
 \item
 The vertex set  of $G$ is  given by $V := \{v_1, v_2 \}$, and the edges are $E:= \{ e_1, e_2, e_3, e_4, e_5\}$ such that $e_{\ell} := \{ e_{\ell, 1}, e_{\ell, 2} \}$ ($\ell = 1, \cdots, 5$), $\zeta (e_{\ell, 1}) = \circledcirc$ for $\ell =1, 2,3,4$ (where $\circledcirc$ denotes an abstract symbol), $\zeta (e_{\ell, 2}) = v_1$ (resp., $\zeta (e_{\ell, 2}) = v_2$) for $\ell = 1, 2$ (resp., $\ell = 3, 4$)  and $\zeta (e_{5, j}) = v_j$ for $j =1, 2$.
 \item
 The bijections $\lambda_j : \zeta^{-1}(\{ v_j \}) \xrightarrow{\sim} \{1, 2, 3\}$ ($j =1, 2$) are given by $\lambda_1 (e_{1, 2}) = \lambda_2 (e_{3, 2})   =1$,  $\lambda_1 (e_{2, 2}) = \lambda_2 (e_{4, 2})  =2$, and
 $\lambda_1 (e_{5, 1}) = \lambda_2 (e_{5, 2}) =3$.
 \end{itemize}
\item
Case of $(g, r) = (1, 1)$:
 \begin{itemize}
 \item
 The vertex set of $G$ is  given by $V :=\{ v_1 \}$,  and the edges are $E := \{e_1, e_2 \}$ such that $e_\ell := \{ e_{\ell, 1}, e_{\ell, 2}\}$ ($\ell =1, 2$),  $\zeta (e_{1, 1}) = v_1$, $\zeta (e_{1, 2}) = \circledcirc$ (where $\circledcirc$ denotes an abstract symbol) and $\zeta (e_{2, j}) =  v_1$ for $j=1, 2$.
 \item
 The bijection $\lambda_1 : \zeta^{-1}( \{ v_1\}) \xrightarrow{\sim} \{ 1, 2, 3 \}$ is given by $\lambda_1 (e_{1, 1}) = 1$, $\lambda_1 (e_{2, 1}) = 2$ and $\lambda_1 (e_{2, 2}) = 3$.
 \end{itemize}
\end{itemize}
As introduced in  ~\cite[Definition 10.17]{Wak20}, a balanced $(p, \N)$-edge numberings of radii $\rho$ is defined as  a certain collection of nonnegative integers $(a_e)_{e \in E}$ indexed by edges of $V$ (i.e., elements of $E$).
In particular, we obtain the set $\mr{Ed}_{p, \N, \mbG, \rho}$ of balanced $(p, \N)$-edge numberings on $\mbG$ of radii $\rho$.
If $(g, r) = (0, 4)$ (resp., $(g, r) = (1, 1)$),
then elements $(a_e)_e$ of $\mr{Ed}_{p, \N, \mbG, \rho}$ can  be uniquely determined by  the integer $a_{e_1}$; moreover, it follows from the various definitions involved  that  the assignment $(a_{e})_e \mapsto a_{e_1}$ defines a bijection 
\begin{align} \label{Eq91}
\mr{Ed}_{p, \N, \mbG, \rho} \xrightarrow{\sim} B_{\N, (\lambda_1, \lambda_2, \lambda_3, \lambda_4), 0, 4} \  \left(\text{resp.,} \  \mr{Ed}_{p, \N, \mbG, \rho} \xrightarrow{\sim} B_{\N, \lambda, 1, 1}\right),
\end{align}
where $\rho := (\rho_i)_{i=1}^4$ and $\lambda_i := \delta^{-1} (\rho_i)$ (resp., $\lambda := \delta^{-1} (\rho)$).

Now, let us first consider the case  $(g, r) = (0, 4)$.
If $\msX_{0, 4}$ denotes the totally degenerate curve over $k$ induced by $\mbG$ (cf. ~\cite[Definition 6.25]{Wak20}), then
any object in  $\partial\overline{\mcM}_{0, 4} (k)$  is  given by 
the pointed stable curve 
$\msX^\varsigma_{0, 4}$ (cf. ~\cite[Remark 6.16]{Wak20})
obtained from  $\msX_{0, 4}$  by changing the order of its marked points via the action of $\varsigma \in  \mfS := \{ (1), (2 \ 3), (2 \  4 \  3)\} \subseteq \mfS_4$
 (:= the symmetric group of $4$ letters).
By composing \eqref{Eq91} with the bijection resulting from ~\cite[Proposition 10.18]{Wak20} for the clutching data $\mbG$, 
we obtain a bijection 
$\mr{Op}^{^\mr{Zzz...}}_{\N, \rho, \msX_{0, 4}} \xrightarrow{\sim} B_{\N, (\lambda_1, \lambda_2, \lambda_3, \lambda_4), 0, 4}$.
After changing the order of the marked points, this bijection becomes a bijection
$\mr{Op}^{^\mr{Zzz...}}_{\N, \rho, \msX^\varsigma_{0, 4}} \xrightarrow{\sim} B_{\lambda_{\varsigma (1)}, \lambda_{\varsigma (2)}, \lambda_{\varsigma (3)}, \lambda_{\varsigma (4)}}$.
Thus, we obtain  a composite bijection
\begin{align}
\partial \mcO p^{^\mr{Zzz...}}_{\N, \rho, 0, 4} (k)\left(= \coprod_{\varsigma \in \mfS} \mr{Op}_{\N, \rho, \msX_{0, 4}^\varsigma}^{^\mr{Zzz...}} \right) &\xrightarrow{\sim} \coprod_{\varsigma \in\mfS}B_{\N, (\lambda_{\varsigma (1)}, \lambda_{\varsigma (2)}, \lambda_{\varsigma (3)}, \lambda_{\varsigma (4)}), 0, 4} \\
& \xrightarrow{\sim}C_{\N, \rho, 0, 4}^0 \sqcup C_{\N, \rho, 0, 4}^1 \sqcup C_{\N, \rho, 0, 4}^\infty \\
& \xrightarrow{\sim} C_{\N, \rho, 0, 4}.
\end{align}
This completes the proof of assertion (i) in the case $(g, r) = (0, 4)$.

Next, suppose that   $(g, r) = (1, 1)$.
Since  the unique (up to isomorphism) totally degenerate curve $\msX_{1, 1}$ in $\overline{\mcM}_{1, 1}$  corresponds to $\mbG$,
it follows from   ~\cite[Proposition 10.18]{Wak20} that 
taking radii at both  the marked point and the  nodal point on $\msX_{1, 1}$ yields a bijection of sets $\left(\partial \mcO p^{^\mr{Zzz...}}_{\N, \rho, 1, 1} (k) =  \right)\mr{Op}_{\N, \rho, \msX_{1, 1}}^{^\mr{Zzz...}} \xrightarrow{\sim} \mr{Ed}_{p, \N, \mbG, \rho}$.
Thus, we obtain a composite bijection
\begin{align}
\partial \mcO p^{^\mr{Zzz...}}_{\N, \rho, 1, 1} (k) \xrightarrow{\sim}
\mr{Ed}_{p, \N, \mbG, \rho} \xrightarrow{\eqref{Eq91}} B_{\N, \lambda, 1, 1} \xrightarrow{\delta} C_{\N, \rho, 1, 1}, 
\end{align}
completing the proof of assertion (i) in the case $(g, r) = (1, 1)$.

Next, let us consider assertion (ii).
The ``if" part of the first assertion is clear.
To prove the inverse implication,
we suppose that there exists a point $q$ in $\mcO p^{^\mr{Zzz...}}_{\N, \rho, g, r}$.
Since $\Pi_{\N, \rho, g, r}$ is finite and faithfully flat, the irreducivility of $\overline{\mcM}_{g, r}$ implies the surjectivity of $\Pi_{\N, \rho, g, r}$.
In particular, the inverse image of $\Pi_{\N, \rho, g, r}$ over any point of $\partial \overline{\mcM}_{g, r}$ is nonempty.
This prove the ``only if" part of the first assertion.
Moreover, since $\Pi_{\N, \rho, g, r}$ is \'{e}tale over the point classifying totally degenerate curves, the degree $\mr{deg}(\Pi_{\N, \rho, g, r})$ coincides with $\sharp (\mr{Op}^{^\mr{Zzz...}}_{\N, \rho, \msX})$ for $\msX \in \partial \overline{\mcM}_{g, r}$.
It follows that  the construction of \eqref{Eq104} implies  the second assertion.

Finally,  the third assertion follows from  the non-resp'd portion of the second assertion.
\end{proof}

\bco \label{Cor34}
Suppose that $(g, r) = (0, 4)$ or $(1, 1)$ and $\mcO p_{\N, \rho, g, r}^{^\mr{Zzz...}} \neq \emptyset$.
Then,
for any $\rho \in ((\mbZ/p^\N \mbZ)^\times /\{ \pm 1 \})^r$,
the degree of the projection $\Pi_{\N, \rho, g, r}$ satisfies the inequality
\begin{align} \label{EQ908}
\mr{deg} (\Pi_{\N, \rho, g, r}) \leq \frac{(p-1)p^{\N -1}}{2} \left( = \sharp ((\mbZ/p^\N \mbZ)^\times /\{ \pm 1 \})\right).
\end{align}
\eco
\begin{proof}
The assertion follows  from  Theorem \ref{Th3}, (ii).
\end{proof}

Moreover, one can prove the following assertion.

\bpr \label{Prop2}
Suppose that $(g, r) = (0, 4)$ or $(1, 1)$, and 
let $\rho := (\rho_i)_{i=1}^r$ be an element of 
$((\mbZ/p^\N \mbZ)^\times / \{ \pm 1\})^r$.
Also, let $\N'$ be
 a positive integer  with $\N \leq \N'$,
 and write $\rho_{\N'} := (\delta_{\N'} (\delta_\N^{-1} (\rho_i)))_{i=1}^r \in ((\mbZ/p^{\N'} \mbZ)^\times / \{ \pm 1\})^r$.
 (In particular, the image of $\rho_{\N'}$ under  the natural quotient $((\mbZ/p^{\N'} \mbZ)^\times / \{ \pm 1\})^r \twoheadrightarrow  ((\mbZ/p^{\N} \mbZ)^\times / \{ \pm 1\})^r$ coincides with $\rho$.)
Then,  the projection
\begin{align} \label{Eq201}
\mcO p^{^\mr{Zzz...}}_{\N', \rho_{\N'}, g, r} \rightarrow \mcO p^{^\mr{Zzz...}}_{\N, \rho, g, r}
\end{align}
obtained by reducing the level of dormant $\mr{PGL}_2^{(\N')}$-opers is an isomorphism.
\epr
\begin{proof}
Note that 
the map $C_{\N', \rho_{\N'}, g, r} \rightarrow C_{\N, \rho, g, r}$ obtained by restricting the natural quotient $((\mbZ/p^{\N'} \mbZ)^\times / \{ \pm 1\})^r \twoheadrightarrow  ((\mbZ/p^{\N} \mbZ)^\times / \{ \pm 1\})^r$   is bijective.
Hence, by  Theorem \ref{Th3}, (ii),  the equality $\mr{deg}(\Pi_{\N', \rho_{\N'}, g, r}) = \mr{deg}(\Pi_{\N, \rho, g, r})$ holds.
This implies that \eqref{Eq201} is of degree $1$.
Since both  $\mcO p^{^\mr{Zzz...}}_{\N, \rho, g, r}$ and $\mcO p^{^\mr{Zzz...}}_{\N', \rho_{\N'}, g, r}$ are proper smooth Deligne-Mumford $k$-stacks of dimension $1$,
this projection turns out to be an isomorphism, as desired.
\end{proof}

\subsection{Lower bounds for the  degrees of projections} \label{SS320}
For simplicity, we here  focus on the case $(g, r) = (0, 4)$.
Let us fix an integer $s$ satisfying $0 \leq s \leq \frac{p-3}{2}$.
We shall define $D_s$
to be the subset of $B_1^4 \left(= B_1 \times B_1 \times B_1 \times B_1 \right)$ consisting of elements $(\lambda_1, \lambda_2, \lambda_3, \lambda_4)$  satisfying the inequality 
\begin{align}
s \leq \mr{min} \left( \left\{ \lambda_i + \lambda_j\right\}_{1 \leq i < j \leq 4} \cup \left\{p -2 - \lambda_i -\lambda_j \right\}_{1 \leq i < j \leq 4} \right) - \mr{max} \left\{ |\lambda_i -\lambda_j| \right\}_{1 \leq i < j \leq 4}.
\end{align}
Note that this set turns out to be nonempty.
In fact, under the assumption  $p \equiv 1$ (resp., $p \equiv 3$) mod $4$,
the quadruple $(\frac{p-1}{4}, \frac{p-1}{4}, \frac{p-1}{4}, \frac{p-1}{4})$
(resp., $(\frac{p-3}{4}, \frac{p-3}{4}, \frac{p-3}{4}, \frac{p-3}{4})$) belongs to $D_s$.

For each $a \in \mbZ_p \left(\supseteq \mbZ \right)$,  let $(\pi_0 (a), \pi_1 (a), \cdots)$ be the  sequence of elements in $\{0, \cdots, p-1 \}$  uniquely determined by the equality
$a  = \sum_{j=0}^\infty \pi_j (a) \cdot  p^j$.
Given  a positive integer $\N$, we obtain the set
\begin{align} \label{Eq2222}
D_{s, \N} := \left\{ (\lambda_i)_{i=1}^4 \in B_\N^4 \, | \, (\pi_j (\lambda_1), \pi_j (\lambda_2), \pi_j (\lambda_3), \pi_j (\lambda_4)) \in D_s \ \text{for every $j =0, 1, \cdots, \N -1$}\right\}.
\end{align}
Since  $D_s \neq \emptyset$,  the set $D_{s, \N}$ is verified to be nonempty.
Also, for $(\lambda_i)_{i=1}^4 \in D_{s, \N}$, the associated sets $B_{\N, (\lambda_1, \lambda_2, \lambda_3, \lambda_4), 0, 4}$,
$B_{\N, (\lambda_1, \lambda_3, \lambda_2, \lambda_4), 0, 4}$, and
$B_{\N, (\lambda_1, \lambda_4, \lambda_2, \lambda_3), 0, 4}$ are nonempty, and their cardinalities satisfies 
\begin{align} \label{EQ903}
s^\N \leq \sharp (B_{\N, (\lambda_1, \lambda_2, \lambda_3, \lambda_4), 0, 4}), \sharp (B_{\N, (\lambda_1, \lambda_3, \lambda_2, \lambda_4), 0, 4}), \sharp (B_{\N, (\lambda_1, \lambda_4, \lambda_2, \lambda_3), 0, 4}).
\end{align}

Now, denote by 
\begin{align} \label{Eq4444}
\delta (D_{s, \N})
\end{align}
 the image of  $D_{s, \N}$ under the bijection  $B_\N^4 \xrightarrow{\sim} ((\mbZ/p^\N \mbZ)^\times /\{ \pm 1 \})^4$ induced by $\delta$.
Then, the following assertion holds.

\ble \label{Lem22}
Let $\rho$ be an element of $\delta (D_{s, \N})$.
Then, the following inequalities hold:
\begin{align}
3 \cdot s^\N \leq \sharp (C_{\N, \rho, 0, 4}), \hspace{5mm} s^\N \leq \mr{deg} (\Pi_{\N, \rho, 0, 4}).
\end{align}
\ele
\begin{proof}
The assertion follows from  Theorem \ref{Th3} together with the above argument.
\end{proof}

\vspace{10mm}
\section{Explicit  genus formula for   dormant modular curves} \label{S3}

This section is devoted to establishing explicit formulas for computing the genera of dormant modular curves of type $(0, 4)$  (cf. Theorem \ref{Thm55}).
The proof relies on detailed analysis of local $\mcD$-module structures of finite level around  smooth and nodal points, Frobenius-pullback techniques, and computations of Chern characters of various sheaves associated to the universal dormant oper.

Throughout this section, 
 we assume that $k$ is algebraically closed.
 For each $\lambda \in (\mbZ/p^\N\mbZ)^\times/\{ \pm1 \}$, we shall write $\lambda_\N^\circledast$,  or simply $\lambda^\circledast$,  for the unique element of $\left\{ 1, \cdots, \frac{p^\N - 1}{2} \right\}$  
 such that the element  $\frac{1}{2} \cdot  \lambda^\circledast$ in $(\mbZ/p^\N \mbZ)^\times/\{ \pm 1 \}$  coincides with $\lambda$.

\subsection{Adjoint bundle associated to the universal dormant $\mr{PGL}_2$-oper} \label{SS49}

Let $(g, r)$ be either $(0, 4)$ or $(1, 1)$.
Let $\rho := (\rho_i)_{i=1}^r$ be an element of  $((\mbZ/p^\N \mbZ)^\times / \{ \pm 1 \})^r$ with $\mcO p^{^\mr{Zzz...}}_{\N, \rho, g, r} \neq \emptyset$.
For simplicity, we write $S = \mcO p^{^\mr{Zzz...}}_{\N, \rho, g, r}$, $\Pi := \Pi_{\N, \rho, g, r}$, and 
 write   $\msX := (f :X \rightarrow S, \{ \sigma_i \}_{i=1}^r)$  for 
the universal family of pointed stable curves over $S$.
Also, write $\Omega := \Omega_{X^\mr{log}/S^\mr{log}}$ and $\mcT := \mcT_{X^\mr{log}/S^\mr{log}}$.
The sum of the divisors $[\sigma_i]$ determined by (the images of) $\sigma_i$'s ($i=1, \cdots, r$) defines a relative effective divisor $D := \sum_{i=1}^r [\sigma_i]$ on $X$.
The relative dualizing sheaf $\omega$ on $X/S$ satisfies $\Omega (-D) \cong \omega$.

For each $\lambda \in C_{\N, \rho, g, r}$, we denote by $q_\lambda$ the 
unique non-smooth point of $X$ (relative to $S$) lying over the $k$-rational point of $S$  corresponding to $\lambda$ via  \eqref{Eq104}.
The base-change of $q_\lambda$ along $F_S^\N : S \rightarrow S$ determines a $k$-rational point  $q_\lambda^{(\N)}$  of $X^{(\N)}$.
 The smooth locus $X^{\mr{sm}}$  and  the non-smooth locus $Z$ of $X$ relative to $S$ satisfies 
 $X^\mr{sm} = X \setminus Z$ and  $Z = \coprod_{\lambda \in C_{\N, \rho, g, r}} \{ q_\lambda \}$.

Let us fix a dormant $2^{(\N)}$-theta characteristic $\vartheta := (\varTheta, \nabla_\vartheta)$ of $X^\mr{log}/S^\mr{log}$.
(According to ~\cite[Proposition 5.14]{Wak20}, there always exists such a $2^{(\N)}$-theta characteristic.)
The universal   dormant $\mr{PGL}_2^{(\N)}$-oper  on $\msX$ 
 corresponds to a
dormant $(\mr{GL}_2^{(\N)}, \vartheta)$-oper  $\nabla^\diamondsuit$ via \eqref{Eq107w}.

Denote by $\mcA d (\mcF_\varTheta)$ the sheaf of 
 $\mcO_X$-linear endomorphisms of $\mcF_\varTheta$ with vanishing trace.
This sheaf  forms a rank $3$ vector bundle, and admits a decreasing filtration
\begin{align}
0 =\mcA d (\mcF_\varTheta)^2 \subseteq  \mcA d (\mcF_\varTheta)^1 \subseteq \mcA d (\mcF_\varTheta)^0 \subseteq \mcA d (\mcF_\varTheta)^{-1} = \mcA d (\mcF_\varTheta)
\end{align}
determined by the condition that
$\mcA d (\mcF_\varTheta)^1$ (resp., $\mcA d (\mcF_\varTheta)^0$) consisting of local sections $h$ with $h (\mcF^1_\varTheta) = 0$ (resp., $h (\mcF^1_\varTheta) \subseteq \mcF^1_\varTheta$).
Since 
$\varTheta \cong (\mcF_\varTheta /\varTheta) \otimes \Omega$, we have 
 $\mcA d (\mcF_\varTheta)^j/ \mcA d (\mcF_\varTheta)^{j+1} \cong \Omega^{\otimes j}$ ($j=-1, 0, 1$).
Also,
$\mcA d (\mcF_\varTheta)$ admits a $\mcD^{(\N -1)}$-module structure $\nabla^\mr{ad}$ induced naturally from $\nabla^\diamondsuit$.
As discussed in the proof of ~\cite[Proposition 8.1]{Wak20},
the resulting collection 
\begin{align}
\mcA d^\heartsuit := (\mcA d (\mcF_\varTheta), \nabla^\mr{ad}, \{ \mcA d (\mcF_\varTheta)^{j-1}\}_{j=0}^3)
\end{align}
forms a dormant $\mr{GL}_3^{(\N)}$-oper on $\msX$ such that $\nabla^\mr{ad}$ has  exponent  at $\sigma_i$ coincides with $[0,  \rho_i^\circledast, p^\N -\rho_i^\circledast]$, regarded as
a multiset of elements in $\mbZ/p^\N \mbZ$ 
  via the quotient $\mbZ \twoheadrightarrow \mbZ/p^\N \mbZ$.

Recall  that $\mcS ol (\nabla^\mr{ad})$ carries an $\mcO_{X^{(\N)}}$-module structure via the underlying homeomorphism of $F^{(\N)}_{X/S}$.
According to  ~\cite[Corollary 8.13, Proposition 8.16]{Wak20}, 
the equality 
$\mbR^\ell f_* (\mcS ol (\nabla^{\mr{ad}})) = 0$ holds for $\ell \neq 1$, and  
there exists a canonical isomorphism 
\begin{align} \label{EQ11}
\mcT_{S^\mr{log}/k} \xrightarrow{\sim} \mbR^1 f_*^{(\N)} (\mcS ol (\nabla^{\mr{ad}}))
\end{align}
between vector bundles on $S$.
In the subsequent discussion, we compute  the Chern character $\mr{ch} (\mbR^1 f_*^{(\N)} (\mcS ol (\nabla^{\mr{ad}})))$ of the vector bundle   $\mbR^1 f_*^{(\N)} (\mcS ol (\nabla^{\mr{ad}}))$.
(When $(g, r) = (1, 1)$, the discussion must be conducted within the framework of the intersection theory for Deligne-Mumford stacks. For a basic  introduction to this theory, we refer the reader to, e.g., ~\cite{Mum}, ~\cite{Vis}.)

\subsection{Chern characters of higher direct images} \label{Ss1}

Let $\eta$ denote  the  $\mcO_{X^{(\N)}}$-linear  morphism $\mcS ol (\nabla^\mr{ad}) \rightarrow F^{(\N)}_{X/S*}(F^{(\N)*}_{X/S} (\mcS ol(\nabla^\mr{ad})))$ corresponding to the identity morphism of $F_{X/S}^{(\N)*} (\mcS ol (\nabla^\mr{ad}))$ via the adjunction relation $F_{X/S}^{(\N)*} (-) \dashv F_{X/S*}^{(\N)} (-)$.
This morphism is  injective, as  its domain $\mcS ol (\nabla^\mr{ad})$ is relatively torsion-free and  
the restriction $\eta |_{X^\mr{sm}}$ over the scheme-theoretic dense open subscheme  $X^\mr{sm} \left(\subseteq X \right)$ is verified to be  injective.
Hence, we obtain a short exact sequence of $\mcO_{X^{(\N)}}$-modules
\begin{align} \label{EQ30}
0 \rightarrow \mcS ol (\nabla^\mr{ad}) \xrightarrow{\eta} F^{(\N)}_{X/S*}(F_{X/S}^{(\N)*} (\mcS ol (\nabla^\mr{ad}))) \rightarrow \mr{Coker}(\eta) \rightarrow 0. 
\end{align}
Since $\mr{ch} (f_* (\mcS ol (\nabla^\mr{ad}))) = 0$ (cf. ~\cite[Proposition 8.2, (i)]{Wak20}), 
this sequence induces an equality of Chern characters
\begin{align} \label{EQee2}
\mr{ch} (\mbR^1 f^{(\N)}_* (\mcS ol (\nabla^\mr{ad}))) & \left(= - \mr{ch} (f_*^{(\N)}([\mcS ol (\nabla^\mr{ad})])) \right) \\
& \   =  -\mr{ch}( f_* ([F_{X/S}^{(\N)*}(\mcS ol (\nabla^\mr{ad}))])) +\mr{ch} ( f^{(\N)}_*([\mr{Coker}(\eta)])), \notag
\end{align}
where, for each coherent sheaf $\mcF$, we denote by  $[\mcF]$ the class in the Grothendiedk group of coherent sheaves  represented by $\mcF$.

In the first step, we examine  the term  $\mr{ch}( f_* ([F_{X/S}^{(\N)*}(\mcS ol (\nabla^\mr{ad}))]))$ in the right-hand side of the above equality.
The inclusion $\mcS ol (\nabla^\mr{ad}) \hookrightarrow F_{X/S*}^{(\N)}(\mcA d (\mcF_\varTheta))$ is $\mcO_{X^{(\N)}}$-linear, so it corresponds,  via the adjunction relation ``$F^{(\N)*}_{X/S} (-) \dashv F_{X/S*}^{(\N)}(-)$", to an $\mcO_X$-linear morphism
\begin{align} \label{Eq100}
\nu^\sharp : F^{(\N)*}_{X/S} (\mcS ol (\nabla^\mr{ad})) \rightarrow \mcA d (\mcF_\varTheta).
\end{align}
Since $\nu$ becomes an isomorphism when restricted to $X \setminus (Z \sqcup \bigcup_{i=1}^r  \mr{Im}(\sigma_i))$,
this morphism fits into a short exact sequence of $\mcO_X$-modules
\begin{align} \label{Eq115}
0 \rightarrow \mcG_\mr{Ker} \rightarrow 
F^{(\N)*}_{X/S} (\mcS ol (\nabla^\mr{ad})) \xrightarrow{\nu^\sharp} \mcA d (\mcF_\varTheta) \rightarrow  \mcG_{\mr{Coker}} \oplus \bigoplus_{i=1}^r \Lambda_i \rightarrow 0,
\end{align}
where 
both $\mcG_\mr{Ker}$ and $\mcG_{\mr{Coker}}$ are supported on the non-smooth locus  $Z$, and for each $i=1, \cdots, r$, $\Lambda_i$ is supported on $\mr{Im}(\sigma_i)$.
This exact sequence yields  an equality 
\begin{align} \label{EQ1}
& \ \ \ \  \mr{ch} ( f_* ([F^{(\N)*}_{X/S} (\mcS ol (\nabla^\mr{ad}))])) \\
& = \mr{ch} (f_*([\mcG_{\mr{Ker}}])) +  \mr{ch} (f_* ([\mcA d (\mcF_\varTheta)])) - \mr{ch} (f_* ([\mcG_{\mr{Coker}}])) - \sum_{i=1}^r\mr{ch} (f_* ([\Lambda_i])). \notag
\end{align}

The Chern characters   in its right-hand side    can be computed by Lemma \ref{Lem44}, \ref{Lem45}, and \ref{Lem46},  described as follows.

\ble \label{Lem44}
The Chern character $\mr{ch}(f_*([\mcA d (\mcF_\varTheta)]))$ of $f_*([\mcA d (\mcF_\varTheta)])$ is given by  
\begin{align}
\mr{ch}(f_*([\mcA d (\mcF_\varTheta)])) =
\begin{cases}  3 - \Pi^*(c_1 (\mcT_{\overline{\mcM}_{0, 4}^\mr{log}/k}))   & \text{if $(g, r) = (0, 4)$;}
\\
 3 - \Pi^*(c_1 (\mcT_{\overline{\mcM}_{1, 1}^\mr{log}/k}))  +2 \cdot \Pi^*(c_1 (\mbE)) & \text{if $(g, r) = (1, 1)$.}
\end{cases}
\end{align}
Here, $\mbE$ denotes the Hodge bundle on $\overline{\mcM}_{1, 1}$.
\ele
\begin{proof}
Since $\mcA d (\mcF_\varTheta)^j/ \mcA d (\mcF_\varTheta)^{j+1} \cong \Omega^{\otimes j}$ ($j=-1, 0, 1$), we obtain  the following sequence of equalities:
\begin{align}
& \ \ \ \ \mr{ch}(f_*([\mcA d (\mcF_\varTheta)])) \\
& = \sum_{j=-1}^1 \mr{ch}(f_* ([\mcA d (\mcF_\varTheta)^j/\mcA d (\mcF_\varTheta)^{j+1}])) \notag \\
& = \mr{ch} (f_* ([\mcT])) + \mr{ch} (f_* ([\mcO_X])) + \mr{ch} (f_* ([\Omega])) \notag \\
& = - \mr{ch} (\mbR^1 f_* (\mcT))  + \left(\mr{ch} (f_* (\mcO_X)) - \mr{ch} (\mbR^1f_* (\mcO_X)) \right) \\
& \ \ \ \
 + \left( \mr{ch} (f_* (\omega)) -\mr{ch} (\mbR^1f_* (\omega)) + \mr{ch} (f_* (\mcO_D))\right) \\
& = 3 - c_1 (\Pi^*(\mcT_{\overline{\mcM}_{g, r}^\mr{log}/k}))  +2 \cdot c_1 (f_* (\omega)) \notag \\ 
& = 3 - \Pi^*(c_1 (\mcT_{\overline{\mcM}_{g, r}^\mr{log}/k}))  +2 \cdot c_1 (f_* (\omega)), \notag 
\end{align}
where the fourth equality follows from the isomorphism $\Pi^*(\mcT_{\overline{\mcM}_{g, r}^\mr{log}/k}) \xrightarrow{\sim} \mbR^1 f_* (\mcT)$ obtained by  pulling-back the  usual Kodaira-Spencer isomorphism for the universal curve over $\overline{\mcM}_{g, r}$ along $\Pi$.
When  $(g, r) = (0, 4)$, 
the relative degree of $\omega$ is negative, which implies 
 $c_1 (f_* (\omega)) = 0$.
On the other hand, when $(g, r) = (1, 1)$, the equality $c_1 (f_*(\omega)) = \Pi^* (c_1 (\mbE))$ holds because of the definition of $\mbE$.
This completes the proof of the assertion.
\end{proof}

\ble \label{Lem45}
Let us fix $i \in \{1, \cdots, r \}$.
Denote by $\psi_i$ the pull-back of the usual $i$-th psi class on $\overline{\mcM}_{g, r}$ along $\Pi$.
Then, the Chern character $\mr{ch}(f_*([\Lambda_i]))$ of $f_*([\Lambda_i])$ is given by
\begin{align}
\mr{ch}(f_*([\Lambda_i]))  = p^\N  \cdot [S]+  \frac{p^{2\N} -p^\N - 2  p^\N \cdot \rho_i^\circledast +2 \rho_i^{\circledast 2}}{2} \cdot \psi_i.
\end{align}
\ele
\begin{proof}
For simplicity, we set $\mcH := F^{(\N)*}_{X/S} (\mcS ol (\nabla^\mr{ad}))$.
Denote by $\nu^\flat_i$ the natural quotient $\mcA d (\mcF_\varTheta) \twoheadrightarrow \Lambda_i$.
Let us define a decreasing filtration  $\{\mcH^{j -1} \}_{j=0}^3$ on $\mcH$ (resp., $\{ \Lambda^{j-1}_i \}_{j=0}^3$ on $\Lambda_i$) as 
$\mcH^{j-1} := (\nu^\sharp)^{-1} (\mcA d (\mcF_\varTheta)^{j-1})$ (resp., $\Lambda_i^{j-1} :=  \nu_i^\flat (\mcA d (\mcF_\varTheta)^{j-1})$).
 The exact sequence  \eqref{Eq115} induces, 
for each $j= 0, 1, 2$,  a short exact sequence
\begin{align} \label{Eq101}
0 \rightarrow (\mcH^{j-1}/\mcH^j) |_{X^\mr{sm}} \rightarrow (\mcA d (\mcF_\varTheta)^{j-1}/\mcA d (\mcF_\varTheta)^j)|_{X^\mr{sm}}   \rightarrow \bigoplus_{i=1}^r \Lambda_i^{j-1}/\Lambda_i^{j} \rightarrow 0.
\end{align}
As mentioned before,  the exponent  of $\mcA d^\heartsuit$ at $\sigma_i$ is $[0, \rho_i^\circledast, p^\N - \rho_i^\circledast]$.
Since $0 \leq \rho_i^\circledast \leq p^\N - \rho_i^\circledast$, an argument similar to the proof of ~\cite[Proposition 8.8, (i)]{Wak8}
shows that $\Lambda_i^{-1}/\Lambda_i^0$ (resp., $\Lambda_i^{0}/\Lambda_i^1$; resp., $\Lambda_i^{1}/\Lambda_i^2$) restricted to each geometric  fiber of $X/S$  is of length $0$ (resp., $\rho_i^\circledast$; resp., $p^\N -\rho_i^\circledast$).
Under the natural identification $\mcA d (\mcF_\varTheta)^{-1}/\mcA d (\mcF_\varTheta)^{0} \cong \Omega^{\otimes (-1)}$ (resp., $\mcA d (\mcF_\varTheta)^{0}/\mcA d (\mcF_\varTheta)^{1} \cong \mcO_X$; resp., $\mcA d (\mcF_\varTheta)^{-1}/\mcA d (\mcF_\varTheta)^{0} \cong \Omega$), the second arrow  in the short exact sequence \eqref{Eq101} can  be identified with the
identity morphism of $\Omega^{\otimes (-1)}$ (resp., the inclusion $\mcO_X \left(-\sum_{i=1}^r \rho_i^\circledast  \cdot [\sigma_i]\right) \hookrightarrow \mcO_X$; resp., the inclusion $\Omega \left(-\sum_{i=1}^r (p^\N -\rho_i^\circledast )\cdot [\sigma_i] \right)$ $\hookrightarrow \Omega$) restricted over $X^\mr{sm}$.
In particular,  we have 
\begin{align}
\Lambda^{-1}_i/\Lambda^0_i = 0, \hspace{5mm}
\Lambda_i^{0}/\Lambda_i^1 \cong  \mcO_X / \mcO_X \left(- \rho_i^\circledast \cdot [\sigma_i]\right),
\hspace{5mm}
\Lambda^{1}_i/\Lambda_i^2 \cong \Omega / \Omega \left(- \left(p^\N - \rho_i^\circledast\right)\cdot [\sigma_i]\right).
\end{align}
It follows that there exists a short sequence of   $\mcO_X$-modules
\begin{align} \label{EQ212}
0 \rightarrow \Omega / \Omega \left(- (p^\N - \rho_i^\circledast)\cdot [\sigma_i]\right) \rightarrow \Lambda_i \rightarrow \mcO_X / \mcO_X \left(- \rho_i^\circledast \cdot [\sigma_i ]\right)  \rightarrow 0.
\end{align}
Hence, the following sequence of equalities holds:
\begin{align}
& \ \ \ \ \mr{ch} (f_* ([\Lambda_i])) \\
& =   \mr{ch} (f_* ([ \mcO_X / \mcO_X \left(- \rho_i^\circledast \cdot [\sigma_i ]\right)])) +  \mr{ch} (f_* ([\Omega / \Omega \left(- (p^\N - \rho_i^\circledast)\cdot [\sigma_i]\right)])) \notag \\
& = \sum_{\ell = 0}^{\rho_i^\circledast -1} \mr{ch} (f_* (\mcO_X (- \ell \cdot [\sigma_i])/ \mcO_X (- (\ell+1) \cdot [\sigma_i]))) \notag \\
& \ \ \ \ + \sum_{\ell = 0}^{p^\N - \rho_i^\circledast -1} \mr{ch} (f_* (\Omega (- \ell \cdot [\sigma_i])/ \Omega (- (\ell+1) \cdot [\sigma_i]))) \notag \\
& = \sum_{\ell = 0}^{\rho_i^\circledast -1} ([S]+\psi_i)^{\ell}+\sum_{\ell = 0}^{p^\N - \rho_i^\circledast -1}([S]+\psi_i)^{\ell} \notag \\
&= \sum_{\ell = 0}^{\rho_i^\circledast -1} ([S]+ \ell \cdot \psi_i)+\sum_{\ell = 0}^{p^\N - \rho_i^\circledast -1}([S]+ \ell \cdot \psi_i) \notag \\
& = p^\N  \cdot [S]+  \frac{p^{2\N} -p^\N - 2  p^\N \cdot \rho_i^\circledast +2 \rho_i^{\circledast 2}}{2} \cdot \psi_i,
\end{align}
where  the third equality follows from the residue isomorphism $\sigma^*_i (\Omega) \cong \mcO_S$.
This completes the proof of the assertion.
\end{proof}

\ble \label{Lem46}
The Chern characters of $f_* ([\mcG_\mr{Ker}])$ and $f_* ([\mcG_{\mr{Coker}}])$ are given by 
\begin{align}
\mr{ch} (f_* ([\mcG_{\mr{Ker}}])) = \mr{ch} (f_* ([\mcG_{\mr{Coker}}])) = \sum_{\lambda \in C_{\N, \rho, g, r}} 2  \cdot \lambda^\circledast (p^\N -\lambda^\circledast) \cdot  [f(q_\lambda)].
\end{align}
\ele
\begin{proof}
Let us take an element $\lambda \in C_{\N, \rho, g, r}$.
Denote by $\widetilde{U}_\lambda$ (resp.,  $\widetilde{T}_\lambda$) the formal neighborhood of $q_\lambda$ (resp., $f (q_\lambda)$) in $X$ (resp., $S$).
Then, there exist isomorphisms $\widetilde{U}_\otimes \xrightarrow{\sim}\widetilde{U}_\lambda$ and $\widetilde{T} \xrightarrow{\sim} \widetilde{T}_\lambda$ that fit into the following Cartesian diagram:
 \begin{align} \label{Eq107}
\vcenter{\xymatrix@C=46pt@R=36pt{
 \widetilde{U}_\otimes \ar[r]^-{\sim} \ar[d] & \widetilde{U}_\lambda \ar[d] \\
 \widetilde{T} \ar[r]_-{\sim} & \widetilde{T}_\lambda,
 }}
\end{align}
where the right-hand vertical arrow is obtained by restricting $f$, and the left-hand vertical arrow denotes the structure morphism of the $\widetilde{T}$-scheme $\widetilde{U}_\otimes$.
The restriction of $(\mcA d (\mcF_\varTheta), \nabla^\mr{ad})$ to $\widetilde{U}_\lambda$ 
defines 
a $\widetilde{\mcD}_\otimes^{(\N -1)}$-module under the identifications $\widetilde{U}_\otimes =  \widetilde{U}_\lambda$, $\widetilde{T} = \widetilde{T}_\lambda$ given by \eqref{Eq107}, and is isomorphic to
 $\msO^{(\N -1)}_{\otimes, 0} \oplus \msO^{(\N -1)}_{\otimes,  \lambda^\circledast} \oplus \msO^{(\N -1)}_{\otimes, p^\N - \lambda^\circledast}$.
Hence, by Proposition \ref{Pro1}, (ii), we have
\begin{align}
\mr{dim}_k (H^0 (\widetilde{U}_\otimes, \mr{Ker}(\nu^\sharp |_{\widetilde{U}_\lambda}))) 
& =  \sum_{\ell = 0, \lambda, p^\N -\lambda} \mr{dim}_k (H^0 (\widetilde{U}_\otimes, \mr{Ker}(\kappa_\ell)))\\
& = 0 +   \lambda^\circledast  (p^\N -\lambda^\circledast) + (p^\N -\lambda^\circledast)   \lambda^\circledast   \notag \\
&=2 \cdot \lambda^\circledast  (p^\N -\lambda^\circledast).
\end{align}
Similarly,   the equality  $\mr{dim}_k (H^0 (\widetilde{U}_\otimes, \mr{Coker}(\nu^\sharp |_{\widetilde{U}_\lambda}))) = 2 \cdot \lambda^\circledast  (p^\N -\lambda^\circledast)$ holds.
This  completes  the proof of the assertion.
\end{proof}

By  combining  the above lemmas,
we obtain the following assertion.

\bpr \label{Prop44}
Regarding the Chern character $\mr{ch} (f_* ([F^{(\N)*}_{X/S}(\mcS ol (\nabla^\mr{ad}))]))$ of $f_* ([F^{(\N)*}_{X/S}(\mcS ol (\nabla^\mr{ad}))])$,
we have   the following assertions.
\begin{itemize}
\item
If $(g, r) = (0, 4)$, then the following equality holds:
\begin{align}
& \ \ \ \ \mr{ch} (f_* ([F^{(\N)*}_{X/S}(\mcS ol (\nabla^\mr{ad}))])) \\
& =  (3 - 4 \cdot p^\N) \cdot [S]-\Pi^* (c_1 (\mcT_{\overline{\mcM}_{0, 4}^\mr{log}/k}))
-\sum_{i=1}^4 \left( \frac{p^{2\N}-p^\N -2  p^\N \cdot \rho_i^\circledast  + 2 \cdot \rho_i^{\circledast 2}}{2} \cdot  \psi_i\right).  \notag
\end{align}
\item
If $(g, r) = (1, 1)$, then the following equality holds:
\begin{align}
& \ \ \ \ \mr{ch} (f_* ([F^{(\N)*}_{X/S}(\mcS ol (\nabla^\mr{ad}))])) \\
& = (3-p^\N) \cdot [S] -\Pi^* (c_1 (\mcT_{\overline{\mcM}_{1, 1}^\mr{log}/k})) + 2 \cdot \Pi^* (c_1 (\mbE)) 
-  \frac{p^{2\N}-p^\N -2  p^\N \cdot \rho^\circledast  + 2 \cdot \rho^{\circledast 2}}{2} \cdot  \psi_1. \notag
\end{align}
\end{itemize}
\epr
\begin{proof}
First, let us consider the case $(g, r) = (0, 4)$.
The equality \eqref{EQ1} together with Lemmas \ref{Lem44}, \ref{Lem45}, and \ref{Lem46}, we have
\begin{align}
& \ \ \ \ \mr{ch} (f_* ([F^{(\N)*}_{X/S}(\mcS ol (\nabla^\mr{ad}))])) \\
& = 3 \cdot [S] -\Pi^* (c_1 (\mcT_{\overline{\mcM}_{0, 4}^\mr{log}/k}))
-\sum_{i=1}^4 \left(p^\N \cdot [S] + \frac{p^{2\N}-p^\N -2  p^\N \cdot \rho_i^\circledast  + 2 \cdot \rho_i^{\circledast 2}}{2} \cdot  \psi_i\right) \\
& =  (3 - 4 p^\N) \cdot [S]-\Pi^* (c_1 (\mcT_{\overline{\mcM}_{0, 4}^\mr{log}/k}))
-\sum_{i=1}^4 \left( \frac{p^{2\N}-p^\N -2  p^\N\cdot \rho_i^\circledast  + 2 \cdot \rho_i^{\circledast 2}}{2} \cdot  \psi_i\right). 
\end{align}
On the other hand, when $(g, r) = (1, 1)$,  the same lemmas say  that 
\begin{align}
& \ \ \ \ \mr{ch} (f_* ([F^{(\N)*}_{X/S}(\mcS ol (\nabla^\mr{ad}))])) \\
& = 3 \cdot [S] -\Pi^* (c_1 (\mcT_{\overline{\mcM}_{1, 1}^\mr{log}/k})) + 2 \cdot \Pi^* (c_1 (\mbE)) 
- \left(p^\N \cdot [S] + \frac{p^{2\N}-p^\N -2  p^\N \cdot \rho^\circledast  + 2 \cdot \rho^{\circledast 2}}{2} \cdot  \psi\right) \notag \\
& = (3-p^\N) \cdot [S] -\Pi^* (c_1 (\mcT_{\overline{\mcM}_{1, 1}^\mr{log}/k})) + 2 \cdot \Pi^* (c_1 (\mbE)) 
-  \frac{p^{2\N}-p^\N -2 p^\N  \cdot \rho^\circledast  + 2 \cdot \rho^{\circledast 2}}{2} \cdot  \psi_1.
\end{align}
This completes the proof of the assertion.
\end{proof}

\subsection{Cokernel of the adjunction morphism   for  $(g, r) = (0, 4)$} \label{SS9}

The next step is devoted to computing   the term $\mr{ch} (f_*^{(\N)}([\mr{Coker}(\eta)]))$ in \eqref{EQee2}.
To this end, the remainder of this paper will focus exclusively on the case where $(g, r) = (0, 4)$. In the case $(g, r) = (1, 1)$, the discussion regarding calculations of Chern characters becomes more complex, and for this reason, we have not yet been able to obtain results corresponding  to those described below. Completing this work remains a task for the future.

We write $\mbP^1$ for the projective line over $k$, i.e., $\mbP^1 := \mr{Proj} (k[u, v])$.
Also, write  $Y  := \mbP^1 \times_k S$, which is equipped with a structure of  $S$-scheme by using the second projection $\mbP^1 \times_k S \twoheadrightarrow S$. 
 There exists a natural projection 
 $\pi : X \rightarrow Y$ such that $X$ can be obtained as the blow-ups of $Y$ at the points $\overline{q}_\lambda := \pi (q_\lambda)$ ($\lambda \in B_{\N, \rho, 0, 4}$).
 The base-change of $\pi$ via $F_S^\N$ determines 
  a morphism $\pi^{(\N)} : X^{(\N)} \rightarrow Y^{(\N)}$.

Let us consider the composite
\begin{align}
F_{Y/S*}^{(\N)}(\mcO_Y)\rightarrow F_{Y/S*}^{(\N)}(\pi_* (\mcO_X)) \xrightarrow{\sim} \pi^{(\N)}_* (F_{X/S*}^{(\N)}(\mcO_X)),
\end{align}
where the first arrow is the direct image of the natural morphism $\mcO_Y \rightarrow \pi_* (\mcO_X)$ via $F_{Y/S}^{(\N)}$, and the second 
arrow arises from the equality $F_{Y/S}^{(\N)}\circ \pi = \pi^{(\N)} \circ F_{X/S}^{(\N)}$.
This composite corresponds to a morphism 
$\beta : \pi^{(\N)*} (F_{Y/S*}^{(\N)} (\mcO_Y)) \rightarrow F_{X/S*}^{(\N)} (\mcO_X)$ via the adjunction relation ``$\pi^{(\N)*} (-) \dashv \pi^{(\N)}_* (-)$".
In particular, we obtain a composite
\begin{align}
\beta_{\nabla} : \pi^{(\N)*} (F_{Y/S*}^{(\N)} (\mcO_Y)) \otimes \mcS ol (\nabla^\mr{ad})
& \xrightarrow{\beta \otimes \mr{id}_{\mcS ol (\nabla^\mr{ad})}}
F_{X/S*}^{(\N)} (\mcO_X) \otimes \mcS ol (\nabla^\mr{ad}) \\
& \xrightarrow{\sim}
F_{X/S*}^{(\N)} (F_{X/S}^{(\N)*}(\mcS ol (\nabla^\mr{ad}))), \notag
\end{align}
where the last arrow follows from the projection formula.

Let us 
write $\mcB := \mr{Coker} \left(\mcO_{Y^{(\N)}} \rightarrow F_{Y/S*}^{(\N)} (\mcO_Y)  \right)$, which forms  a vector bundle on $Y^{(\N)}$ of rank $p^\N -1$.
 Then, we obtain a morphism of short exact sequences
\begin{align} \label{Ed37}
\vcenter{\xymatrix@C=26pt@R=36pt{
 0 \ar[r] & \mcS ol (\nabla^\mr{ad}) \ar[r] \ar[d]  & \pi^{(\N)*} F_{Y/S*}^{(\N)} (\mcO_Y) \otimes \mcS ol (\nabla^\mr{ad}) \ar[r] \ar[d]^-{\beta_\nabla}  & \pi^{(\N)*}(\mcB) \otimes \mcS ol (\nabla^\mr{ad}) \ar[r] \ar[d] & 0
 \\
0\ar[r]  & \mcS ol (\nabla^\mr{ad}) \ar[r]_-{\eta} & F_{X/S*}^{(\N)} (F_{X/k}^{(\N)*}(\mcS ol (\nabla^\mr{ad})))\ar[r] & \mr{Coker}(\eta)  \ar[r] & 0,
 }}
\end{align}
where the upper horizontal  sequence is obtained as the tensor product of  $\mcS ol (\nabla^\mr{ad})$ and 
the pull-back via $\pi^{(\N)}$ of the  short exact sequence
 $0 \rightarrow \mcO_{Y^{(\N)}} \rightarrow F_{Y/S*}^{(\N)}(\mcO_Y) \rightarrow \mcB \rightarrow 0$.
Applying the snake lemma to this diagram, we obtain a short exact sequence of $\mcO_{X^{(\N)}}$-modules
\begin{align} \label{EQ3}
0 \rightarrow \pi^{(\N)*}(\mcB) \otimes \mcS ol (\nabla^\mr{ad}) \rightarrow \mr{Coker} (\eta)
\rightarrow  \mr{Coker}(\beta_\nabla) \rightarrow 0.
\end{align}
This  sequence induces an equality of Chern characters
\begin{align} \label{EQ61}
\mr{ch} (f_*^{(\N)} ([\mr{Coker} (\eta)])) = \mr{ch} (f_*^{(\N)} ([\mr{Coker} (\beta_\nabla)])) + \mr{ch} (f_*^{(\N)} ([ \pi^{(\N)*}(\mcB) \otimes \mcS ol (\nabla^\mr{ad})])).
\end{align}

Regarding the right-hand side of this equality, one can prove  Lemmas \ref{Lem49} and  \ref{Lem79}, described as follows.

\ble \label{Lem49}
The Chern character  $\mr{ch} (f_*^{(\N)} ([\mr{Coker} (\beta_\nabla)]))$ of $f_*^{(\N)} ([\mr{Coker} (\beta_\nabla)])$ is given by
\begin{align} \label{EQ177}
\mr{ch} (f_*^{(\N)} ([\mr{Coker} (\beta_\nabla)])) = 
 \sum_{\lambda \in C_{\N, \rho, 0, 4}} \left(-p^\N(p^\N -1)+ \lambda^\circledast  \cdot (p^\N -\lambda^\circledast) \right)\cdot [f (q_\lambda)].
\end{align}
\ele
\begin{proof}
Let $S_0 := \mr{Spec} (k[t])$ and  $Y_0 := \mr{Spec} (k[t, x])$.
Using  the morphism $Y_0 \rightarrow S_0$ corresponding to the inclusion $k[t] \rightarrow k[t, x]$,
we consider $Y_0$ as a curve over $S_0$.
Denote by $X_0$ the surface over $k$ defined  by blowing-up $Y_0$ at the point $\overline{q}$ determined by $(t, x) = (0, 0)$.
Also, denote by  $\pi_0 : X_0 \rightarrow Y_0$ and $f_0 : X_0 \rightarrow S_0$ the natural projections, and by $E$ the exceptional divisor of this blow-up.
Then, 
$X_0$ is covered by  two open subschemes  $\mr{Spec} (k[t, x, y]/(xy - t))$ and $\mr{Spec} (k[t, x, z])/(x- zt)$ under  the relation $x = \frac{1}{z}$.
The morphism 
\begin{align}
\beta_0 : \pi_0^{(\N)*} (F_{Y_0/S_0*}^{(\N)}(\mcO_{Y_0})) \rightarrow F_{X_0/S_0*}^{(\N)} (\mcO_{X_0})
\end{align}
  defined  in the same manner as $\beta$  can be obtained by gluing the  $k[t, x^{p^\N}, y^{p^\N}]/(x^{p^\N} y^{p^\N} -t^{p^\N})$-linear  morphism
\begin{align} \label{EQ117}
\beta_{0, y} & : k[t, x, y^{p^\N}/(x^{p^\N}y^{p^\N}-t^{p^\N})] \left(= k[t, x] \otimes_{k[t, x^{p^\N}]} k[t, x^{p^\N}, y^{p^\N}]/(x^{p^\N}y^{p^\N} -t^{p^\N}) \right) \\
&  \ \ \ \rightarrow  k[t, x, y]/(xy - t) \notag
\end{align}
and the  $k[t, x^{p^\N}, z^{p^\N}]/(x^{p^\N} -z^{p^\N}t^{p^\N})$-linear morphism
\begin{align}
\beta_{0, z} &: k[t, x, z^{p^\N}] /(x^{p^\N} - z^{p^\N} t^{p^\N}) \left( = k[t, x] \otimes_{k[t, x^{p^\N}]} k[t, x^{p^\N}, z^{p^\N}]/(x^{p^\N} -z^{p^\N}t^{p^\N})\right)\\
& \ \ \  \rightarrow k[t, x, z]/(x - zt)
\end{align}
obtained in natural manners.
Both $\beta_{0, y}$ and $\beta_{0, z}$ are injective, and we have 
\begin{align} \label{EQ119}
\mr{Coker} (\beta_{0, y}) & = \bigoplus_{i \geq 0, j \geq 1, p^\N-1 \geq i + j} k [y^{p^\N}] \cdot t^i y^j, \\
\mr{Coker} (\beta_{0, z}) & = \bigoplus_{i \geq 0, j \geq 1, p^\N-1 \geq i+j} k[z^{p^\N}] \cdot  x^i z^{j}.  \notag
\end{align}
It follows that
$\beta_0$ is injective and its cokernel is supported on $E^{(\N)} \cong \mbP^{1(\N)}$.

Note that there exists a decreasing filtration $\{ \mr{Coker}(\beta_{0})^\ell \}_{\ell =0}^{p^\N-1}$ on  $\mr{Coker}(\beta_0)$ such that 
$\mr{Coker} (\beta_0)^\ell$ corresponds to
the subspace $\bigoplus_{i \geq \ell, j \geq 1, p^\N-1 \geq i + j} k [y^{p^\N}] \cdot t^i y^j$ via the first decomposition  in \eqref{EQ119} and
corresponds to the subspace $\bigoplus_{i \geq \ell, j \geq 1, p^\N-1 \geq i + j} k [z^{p^\N}] \cdot x^i z^j$ via the second  equality.
The graded piece 
$\mr{Coker}(\beta_0)^\ell /\mr{Coker}(\beta_0)^{\ell +1}$ is considered as an $\mcO_{E^{(\N)}}$-module, and it is isomorphic to
he direct sum $\mcG_{\ell, 1} \oplus \cdots \oplus \mcG_{\ell, p^\N-1 - \ell}$ where $\mcG_{\ell, a}$ corresponds to the factor $k  [y^{p^\N}]\cdot t^\ell y^a$ via
the first decomposition  in \eqref{EQ119}  and the factor $k [z^{p^\N}] \cdot x^\ell z^{p^\N-\ell -a}$ in the second decomposition.
Since 
$x^\ell z^{p^\N-\ell -a} = y^{-p^\N} \cdot t^\ell y^{a}$, 
the $\mcO_{E^{(\N)}}$-module $\mcG_{\ell, a}$ is isomorphic to $\mcO_{E^{(\N)}}(-1)$.

Next, we regard $\widetilde{U}_\otimes$ (resp., $\widetilde{T}$) as the formal neighborhood of the point of $X_0$ (resp., $S_0$) determined by $(t, x, y) = (0, 0, 0)$ (resp., $t=0$).
For $d \in \mbZ/p^\N \mbZ$,
 we have 
\begin{align} \label{EQ200dd}
& \ \ \ \ H^0 (\widetilde{U}_\otimes^{(\N)}, \mr{Coker} (\beta_{0} |_{\widetilde{U}_{\otimes}} \otimes \mr{id}_{ \mcS ol (\widetilde{\nabla}^{(\N -1)}_{\otimes, d})})) \\
& \cong
H^0 (\widetilde{U}_\otimes^{(\N)}, \mr{Coker} (\beta_{0} |_{\widetilde{U}_{\otimes}}) \otimes  \mcS ol (\widetilde{\nabla}^{(\N -1)}_{\otimes, d})) \\
&\cong 
\begin{cases}
\bigoplus_{i \geq 0, j \geq 1, p^\N-1 \geq i + j} k [y^{p^\N}] \cdot t^i y^j \otimes 1 & \text{if $d = 0$};
\\
\bigoplus_{i \geq 0, j \geq 1, p^\N >  i+j, p^\N >  i + \widetilde{d}} k \cdot  t^i y^j \otimes x^{\widetilde{d}} \oplus 
\bigoplus
_{i \geq 0, j \geq 1, p^\N-1 \geq i + j} k [y^{p^\N}]\cdot t^i y^j \otimes y^{p^\N -\widetilde{d}} & \text{if $d \neq  0$}.
\end{cases}
\end{align}
Under the assumption that  $d \neq 0$,
we
write $\mcR_d$ for the quotient of $\mr{Coker} (\beta_{0} |_{\widetilde{U}_{\otimes}} \otimes \mr{id}_{ \mcS ol (\widetilde{\nabla}^{(0)}_{\otimes, d})})$ by the $\widetilde{\mcO}_\otimes$-submodule  corresponding to the second direct summand
$\bigoplus
_{i \geq 0, j \geq 1, p^\N-1 \geq i + j} k [y^{p^\N}]\cdot t^i y^j \otimes y^{p^\N -\widetilde{d}}$
 in the rightmost of  \eqref{EQ200dd}.
 The $\widetilde{\mcO}^{(\N)}_\otimes$-module $\mcR_d$ is supported on the point $(t, x^{p^\N}, y^{p^\N}) = (0, 0, 0)$, and  there exists a natural surjection $\gamma_d : \mr{Coker} (\beta_{0} |_{\widetilde{U}_{\otimes}} \otimes \mr{id}_{ \mcS ol (\widetilde{\nabla}^{(0)}_{\otimes, d})}) \twoheadrightarrow \mcR_d$.
 Also, 
 we have
 \begin{align} \label{EQ201}
 \mr{dim}_k (H^0 (\widetilde{U}_\otimes^{(\N)}, \mcR_d)) &= \mr{dim}_k \left(\bigoplus_{i \geq 0, j \geq 1, p^\N >  i+j, p^\N >  i + \widetilde{d}} k \cdot  t^i y^j \otimes x^{\widetilde{d}} \right) \\
& =  \frac{\widetilde{d} (2p^\N-1- \widetilde{d})}{2}. \notag
 \end{align}

Now, let us take an element $\lambda \in B_{\N, \rho, 0, 4}$.
Denote by $\widetilde{U}_\lambda$ (resp., $\widetilde{V}_\lambda$)  the formal neighborhood of  $q_\lambda$ (resp.,  $\overline{q}_\lambda$) in  $X$ (resp., $Y$).
The natural morphism $\widetilde{V}_\lambda \rightarrow Y$ induces a morphism $\widetilde{V}^{(\N)}_\lambda \rightarrow Y^{(\N)}$.
Fix an isomorphism $\widetilde{V}_0 \xrightarrow{\sim} \widetilde{V}_\lambda$, which extends, via applying the functor $(-)^{(\N)}$,
to an isomorphism
 $Z_0 \xrightarrow{\sim} Z_\lambda$ between 
 $Z_0 := \widetilde{V}_0^{(0)} \times_{Y_0^{(\N)}} X_0^{(\N)}$ and $Z_\lambda := \widetilde{V}^{(\N)}_\lambda \times_{Y^{(\N)}} X^{(\N)}$.
The morphism $\beta$ can be identified with  $\beta_0$ via this isomorphism.
We  consider  $\mr{Coker} (\beta_\nabla) |_{Z_\lambda}$ as an $\mcO_{Z_0}$-module under the fixed isomorphism $Z_0 \xrightarrow{\sim} Z_\lambda$.
Since $(\mcA d (\mcF_\varTheta), \nabla^\mr{ad}) |_{\widetilde{U}_\lambda}$ is isomorphic to $\msO^{(\N -1)}_{\otimes, 0} \oplus \msO^{(\N)}_{\otimes, \lambda^\circledast} \oplus \msO^{(\N)}_{\otimes, p^\N -\lambda^\circledast}$,
 we obtain  a composite
 surjection
 \begin{align} \label{EQ122}
 \mr{Coker} (\beta_\nabla) |_{Z_\lambda} 
& \xrightarrow{\sim} \bigoplus_{\lambda' =0, \lambda, -\lambda} \mr{Coker} (\beta_0 |_{\widetilde{U}_\otimes} \otimes \mr{id}_{\mcS ol (\widetilde{\nabla}^{(\N -1)}_{\otimes, \lambda'})}) \\
 & \xrightarrow{0 \oplus \gamma_{\lambda} \oplus \gamma_{-\lambda}} \mcR_{\lambda} \oplus \mcR_{-\lambda}.
 \end{align}
 Let $\mcQ_\lambda$ denote the kernel of this composite surjection.
 The filtration $\{ \mr{Coker} (\beta_0)^\ell \}_{\ell =0}^{p^\N-1}$  induces  a filtration
 $\{ \mcQ_\lambda^\ell \}_{\ell = 0}^{p^\N-1}$ via the first arrow in \eqref{EQ122}.
According to  Lemma \ref{Lem120} described below  and  the above discussion concerning  $\{ \mr{Coker} (\beta_0)^\ell \}_{\ell =0}^{p^\N-1}$,
 there exists a composite isomorphism 
 \begin{align}
 \mcQ_\lambda^\ell / \mcQ_\lambda^{\ell +1}
 & \xrightarrow{\sim} (\iota^* (\mcS ol (\nabla^\mr{ad}))/\mcH) \otimes \left(\bigoplus_{s =1}^{p^\N-1-\ell} \mcG_{\ell, s} \right) \\
& \xrightarrow{\sim}
 \mcO_{E^{(\N)}} (-1)^{\oplus 3} \otimes \mcO_{E^{(\N)}} (-1)^{\oplus (p^\N-1-\ell)} \notag \\
 & \xrightarrow{\sim} \mcO_{E^{(\N)}} (-2)^{\oplus 3(p^\N-1-\ell)}. \notag
 \end{align}
 Here,   $\iota$, or more precisely $\iota_\lambda$,  denotes
the morphism $\left(\mbP^1 \cong  \right) E^{(\N)} \hookrightarrow X^{(\N)}$ obtained, via base-change along  $F_{S_0}^\N$, from the inclusion $E \hookrightarrow X_0$
under the fixed  identifications $\widetilde{V}_0 = \widetilde{V}_\lambda$ and $Z_0 = Z_\lambda$.

 By taking the direct sum of the short exact sequences $0 \rightarrow \mcQ_\lambda \rightarrow \mr{Coker}(\beta_\nabla) |_{Z_\lambda} \rightarrow \mcR_\lambda \oplus \mcR_{-\lambda} \rightarrow 0$ defined for  the various $\lambda \in B_{\N, \rho, 0, 4}$, we obtain 
 a short exact sequence
\begin{align}
0 \rightarrow \bigoplus_{\lambda \in C_{\N, \rho, 0, 4}}\mcQ_\lambda \rightarrow \mr{Coker}(\beta_\nabla) \rightarrow \bigoplus_{\lambda \in C_{\N, \rho, 0, 4}}  \left(\mcR_{\lambda} \oplus  \mcR_{ - \lambda} \right)\rightarrow 0.
\end{align}
Hence, we have
\begin{align}
& \ \ \ \ \mr{ch} (f_*^{(\N)}[\mr{Coker} (\beta_\nabla)]) \\
& =  \sum_{\lambda \in C_{\N, \rho, 0, 4} }\mr{ch} (f_*^{(\N)}[\mcQ_\lambda]) + \sum_{\lambda \in C_{\N, \rho, 0, 4}} \left(\mr{ch} (f_*^{(\N)} ([\mcR_{\lambda}])) + \mr{ch} (f_*^{(\N)}([\mcR_{ -\lambda}])) \right) \\
& =  \sum_{\lambda \in C_{\N, \rho, 0, 4}} \sum_{\ell =0}^{p^\N-2} \mr{ch} (f_*^{(\N)} ([\mcQ_\lambda^\ell /\mcQ_\lambda^{\ell +1}])) \\
& \ \ \ \  + \sum_{\lambda \in C_{\N, \rho, 0, 4}} \left( \frac{\lambda^\circledast \cdot  (2p^\N-1 -\lambda^\circledast)}{2} \cdot [f(q_\lambda)] +  \frac{(p^\N - \lambda^\circledast) \cdot  (p^\N-1 +\lambda^\circledast)}{2} \cdot [f(q_\lambda)] \right) \notag \\
& =  \sum_{\lambda \in C_{\N, \rho, 0, 4}} \sum_{\ell =0}^{p^\N-2} \mr{ch} (f_*^{(\N)} ([\iota_{\lambda*}(\mcO_{\mbP^1} (-2))^{\oplus 3(p^\N -1-\ell)}]))  \notag \\
& \ \ \ \ + \sum_{\lambda \in C_{\N, \rho, 0, 4}}  \frac{p^{2\N}-p^\N +2p^\N \cdot \lambda^\circledast  -2\lambda^{\circledast 2}}{2} \cdot [f(q_\lambda)]  \notag \\
& =  \sum_{\lambda \in C_{\N, \rho, 0, 4}} \frac{-3p^\N (p^\N -1)}{2} \cdot [f(q_\lambda)]
+\sum_{\lambda \in C_{\N, \rho, 0, 4}}  \frac{p^{2\N}-p^\N +2p^\N \cdot \lambda^\circledast  -2\lambda^{\circledast 2}}{2} \cdot [f(q_\lambda)] \notag \\
& = \sum_{\lambda \in C_{\N, \rho, 0, 4}} \left(-p^\N(p^\N -1)+ \lambda^\circledast  \cdot (p^\N -\lambda^\circledast) \right)\cdot [f(q_\lambda)],
 \end{align}
 where the second equality follows from  \eqref{EQ201}.
This completes the proof of the assertion.
\end{proof}

The following lemma was applied in the above lemma.

\ble \label{Lem120}
Let us keep the notation in the proof of the previous lemma.
Denote by $\mcH$ the torsion sheaf of  $\iota^* (\mcS ol (\nabla^\mr{ad}))$.
Then, 
$\iota^* (\mcS ol (\nabla^\mr{ad}))/\mcH$ is isomorphic to $\mcO_{\mbP^1} (-1)^{\oplus 3}$ (under any identification $E^{(\N)} = \mbP^1$).
\ele
\begin{proof}
We regard $E$ as an irreducible component in the fiber  of $\msX$ over $f (q_\lambda) \in S$.
By equipping $E$ with the marked points   determined by the nodal and marked points of that fiber, 
we obtain a $3$-pointed smooth curve  $\msE := (E, \{ \sigma_{E, 1}, \sigma_{E, 2}, \sigma_{E, 3} \})$  of genus $0$.
Denote by $\vartheta_E := (\varTheta_E, \nabla_{\vartheta_E})$ the $2^{(\N)}$-theta characteristic of $E^\mr{log}$ obtained by restricting $\vartheta$.
The vector bundles $\mcF_{\varTheta_E}$ and $\mcA d (\mcF_{\varTheta_E})$ associated to $\vartheta_E$  can be defined as in the case of $\vartheta$.
In particular, the pull-back $\iota^* (\mcA d(\mcF_\varTheta))$  is naturally isomorphic to  $\mcA d (\mcF_{\varTheta_E})$.
For $i=1, 2,3$, we denote by $\rho_{E, i}$ the radius of the dormant $\mr{PGL}_2^{(\N)}$-oper on  $\msE$ obtained by restricting the universal dormant   $\mr{PGL}_2^{(\N)}$-oper on $\msX$ (cf. ~\cite[Section 6.4]{Wak20}).

Now,
 we set $\mcE := \iota^* (\mcS ol (\nabla^\mr{ad}))/\mcH$ for simplicity.
By the Grothendieck-Birkoff theorem,  there exists an isomorphism $\iota^* (\mcS ol (\nabla^\mr{ad}))/\mcH \cong \mcO_{E^{(\N)}} (\ell_1) \oplus \mcO_{E^{(\N)}} (\ell_2) \oplus \mcO_{E^{(\N)}} (\ell_3)$ for some $\ell_1, \ell_2, \ell_3 \in \mbZ$ with $\ell_1 \geq \ell_2 \geq \ell_3$.
The morphism  $F^{(\N)*}_{E/k}(\mcE) \rightarrow \mcA d (\mcF_{\varTheta_E})$ corresponding, via the adjunction relation $F^{(\N)*}_{E/k} (-) \dashv F_{E/k*}^{(\N)} (-)$, to the inclusion $\mcE \hookrightarrow \mcA d (\mcF_{\varTheta_E})$ is verified to be injective.
This morphism fits into the following short exact sequence
\begin{align}
0 \rightarrow F^{(\N)*}_{E/k} (\mcE) \rightarrow \mcA d (\mcF_{\varTheta_E}) \rightarrow \bigoplus_{i=1}^3 \Lambda_{E, i} \rightarrow 0,
\end{align}
where $\Lambda_{E, i}$ ($i=1, \cdots, r$) denotes an $\mcO_E$-module supported on $\sigma_{E, i}$.
Similarly to the proof of Lemma \ref{Lem45},   
we see that $\Lambda_{E, i}$ is an extension of $\mcO_E/\mcO_E (-\rho^\circledast_{E, i} [\sigma_{E, i}])$ by $\Omega_{E^\mr{log}/k} / \Omega_{E^\mr{log}/k}( -(p^\N -\rho^\circledast_{E, i}) [\sigma_i])$.
Thus,  the following sequence of equalities holds:
\begin{align} \label{Eq4567}
\mr{deg} (\mcE) &= \frac{1}{p^\N} \cdot \mr{deg} (F^{(\N)*}_{E/k} (\mcE)) \\
& = \frac{1}{p^\N} \cdot \left( \mr{deg} (\mcA d (\mcF_{\varTheta_E})) - \sum_{i=1}^3 \mr{length}_{\mcO_E} (\Lambda_{E, i}) \right) \notag \\
& =  \frac{1}{p^\N} \cdot \left( 0  -\sum_{i=1}^3 \left(\rho_{E, i}^\circledast - (p^\N - \rho^\circledast_{E, i}) \right) \right) \notag \\
& = -3.
\end{align}
 
 We suppose  that $\ell_1 \geq 0$.
 Since $\mr{deg} (\mcT_{E^\mr{log}/k}) < 0$,
 the composite
 \begin{align}
 & \ \ \ \ \mcO_{E} (p\ell_1) \\
&\hookrightarrow \bigoplus_{i=1}^3 \mcO_{E} (p \ell_i) \\
 & \xrightarrow{\sim} \bigoplus_{i=1}^3 F^{(\N)*}_{E/k}(\mcO_{E^{(\N)}} (\ell_i)) \\
 & \xrightarrow{\sim} F^{(\N) *}_{E/k} (\mcE) \notag \\
 & \hookrightarrow \mcA d (\mcF_{\varTheta_E}) \notag \\
 & \twoheadrightarrow \mcA d (\mcF_{\varTheta_E})^0/\mcA d (\mcF_{\varTheta_E})^1 \notag \\
 & \xrightarrow{\sim} \mcT_{E^\mr{log}/k} \notag
 \end{align}
 must be the zero map, where the first arrow denotes the inclusion into the first factor.
This contradicts the fact that  the morphism $F^{(\N)*}_{E/k} (\mcE) \rightarrow \mcA d (\mcF_{\varTheta_E})$ is an isomorphism over the generic point.
Hence,   the integers $\ell_1$, $\ell_2$, and $\ell_3$ are all negative.
By \eqref{Eq4567}, we have  $\ell_1 = \ell_2 = \ell_3 =-1$, and this  completes  the proof of the assertion.
 \end{proof}

\ble \label{Lem79}
Let us choose a $k$-rational point $s$ of $\mbP^1 \setminus \{[0], [1], [\infty] \}\left(\cong \mcM_{0, 4}\right)$, and denote by $D_s$ the divisor on $S$ defined as $\Pi^{-1} (s)$.
Then,
the Chern character  $ \mr{ch} (f_*^{(\N)} ([ \pi^{(\N)*}(\mcB) \otimes \mcS ol (\nabla^\mr{ad})]))$  of $f_*^{(\N)} ([ \pi^{(\N)*}(\mcB) \otimes \mcS ol (\nabla^\mr{ad})])$ satisfies the  equality
\begin{align} \label{EQ149}
 & \ \ \ \ \mr{ch} (f_*^{(\N)} ([ \pi^{(\N)*}(\mcB) \otimes \mcS ol (\nabla^\mr{ad})])) \\
& =  -3(p^\N -1) \cdot [S] - (p^\N -1) \cdot  \mr{ch} (\mbR^1 f_{*}^{(\N)} ([\mcS ol (\nabla^\mr{ad})])) + p^\N (p^\N -1) \cdot   [D_s]. \notag
\end{align}
\ele
\begin{proof}
It is verified that $\mcB \cong \mcO_{Y^{(\N)}} (-1)^{\oplus (p^\N -1)} \cong \mcO (-[\sigma^{(\N)}_{Y, s}])^{\oplus (p^\N -1)}$, where $\sigma_{Y, s}$ denotes the section $S \rightarrow Y$ determined by $s$.
In particular, we have
\begin{align} \label{EQ130}
 \pi^{(\N)*} (\mcB) \otimes \mcS ol (\nabla^\mr{ad}) \cong \left( \mcO_{X^{(\N)}} (- [\sigma^{(\N)}_{X, s}]) \otimes \mcS ol (\nabla^\mr{ad})\right)^{\oplus (p^\N -1)}.
\end{align}
The tensor product of  $\mcS ol (\nabla^\mr{ad})$ and the natural short exact sequence 
$0 \rightarrow \mcO_{X^{(\N)}} (-[\sigma_{X, s}^{(\N)}]) \rightarrow \mcO_{X^{(\N)}} \rightarrow \mcO_{X^{(\N)}}/\mcO_{X^{(\N)}} (-[\sigma_{X, s}^{(\N)}])\rightarrow 0$ determines a short exact sequence
\begin{align} \label{EQ133}
0 \rightarrow   \mcO_{X^{(\N)}} (- [\sigma^{(\N)}_{X, s}]) \otimes \mcS ol (\nabla^\mr{ad}) 
\rightarrow \mcS ol (\nabla^\mr{ad})
\rightarrow \sigma_{X, s*}^{(\N)}(\sigma_{X, s}^{(\N)*} (\mcS ol (\nabla^\mr{ad})))
\rightarrow 0.
\end{align}

Since $\mr{Im}(\sigma_i) \cap \mr{Im} (\sigma_{X, s}) = \emptyset$ for $i=1, 2, 3$,
the pull-back of \eqref{Eq115} via $\sigma_{X, s}$ yields a short exact sequence
\begin{align} \label{EQ135}
0 &\rightarrow  \sigma_{X,s}^{(\N)*} (\mcS ol (\nabla^\mr{ad}))  \left(\cong  \sigma_{X, s}^* (F^{(\N)*}_{X/S} (\mcS ol (\nabla^\mr{ad}))) \right) 
\\&
\rightarrow \sigma_{X, s}^* (\mcA d (\mcF_\varTheta)) \rightarrow \sigma_{X, s}^* (\Lambda_4) \rightarrow 0. 
\notag
\end{align}
By  the fact that $\mcA d (\mcF_\varTheta)^{j}/\mcA d (\mcF_\varTheta)^{j+1} \cong \Omega^{\otimes (j-1)}$, the equality  
$\mr{ch} (\sigma_{X, s}^{*} (\mcA d (\mcF_\varTheta))) = 3 \cdot  [S]$ holds.
Also,
it follows from the short exact sequence \eqref{EQ212} for $i=4$ that
$\mr{ch} (\sigma_{X, s}^* (\Lambda_4)) = p^\N \cdot [D_s]$.

By combining \eqref{EQ130}, \eqref{EQ133}, and \eqref{EQ135}, we have
\begin{align}
& \ \ \ \ \mr{ch} (f_*^{(\N)} ([ \pi^{(\N)*}(\mcB) \otimes \mcS ol (\nabla^\mr{ad})])) \\
& = 
(p^\N -1) \cdot \mr{ch} (f_*^{(\N)} (\mcO_{X^{(\N)}} (- [\sigma^{(\N)}_{X, s}]) \otimes \mcS ol (\nabla^\mr{ad})))\notag \\
& = (p^\N -1) \cdot \left(\mr{ch} (f_*^{(\N)} ([\mcS ol (\nabla^\mr{ad})])) - \mr{ch} (\sigma_{X, s}^{(\N)*} (\mcS ol (\nabla^\mr{ad}))) \right) \notag \\
& = (p^\N -1) \cdot \left(\mr{ch} (f_*^{(\N)} ([\mcS ol (\nabla^\mr{ad})])) - 3 \cdot [S] +  p^\N \cdot [D_s] \right) \notag\\
& =  -3(p^\N -1) \cdot [S] - (p^\N -1) \cdot  \mr{ch} (\mbR^1 f_{*}^{(\N)} ([\mcS ol (\nabla^\mr{ad})])) + p^\N (p^\N -1) \cdot   [D_s],
\end{align}
where the last equality follows from $f_*^{(\N)} (\mcS ol (\nabla^\mr{ad})) = 0$ (cf. ~\cite[Proposition 8.2]{Wak20}).
This completes the proof of the assertion.
\end{proof}

By combining Lemmas \ref{Lem49} and  \ref{Lem79}, we obtain the following assertion. 

\bpr \label{Prp145}
The Chern character $\mr{ch} (f_*^{(\N)} ([\mr{Coker}(\eta)]))$ of $f_*^{(\N)} ([\mr{Coker}(\eta)])$ is given by 
\begin{align} \label{EQ178}
& \ \ \ \ 
\mr{ch} (f_*^{(\N)} ([\mr{Coker}(\eta)]))  \\
& = \sum_{\lambda \in C_{\N, \rho, 0, 4}}  \left(-p^\N(p^\N -1)+ \lambda^\circledast  \cdot (p^\N -\lambda^\circledast) \right) \cdot [f(q_\lambda)] 
  -3(p^\N -1) \cdot [S]  \\
& \ \ \ \
- (p^\N -1) \cdot  \mr{ch} (\mbR^1 f_{*}^{(\N)} ([\mcS ol (\nabla^\mr{ad})])) + p^\N (p^\N -1) \cdot   [D_s].   \notag
\end{align}
\epr
\begin{proof}
Observe the following sequence of equalities:
\begin{align}
& \ \ \ \ \mr{ch} (f_*^{(\N)} ([\mr{Coker}(\eta)])) \\
&\stackrel{\eqref{EQ61}}{=} 
\mr{ch} (f_*^{(\N)} ([\mr{Coker} (\beta_\nabla)])) + \mr{ch} (f_*^{(\N)} ([ \pi^{(\N)*}(\mcB) \otimes \mcS ol (\nabla^\mr{ad})])) \\
& \stackrel{\eqref{EQ177}}{=} \sum_{\lambda \in C_{\N, \rho, 0, 4}}  \left(-p^\N(p^\N -1)+ \lambda^\circledast  \cdot (p^\N -\lambda^\circledast) \right) \cdot [f(q_\lambda)] + 
\mr{ch} (f_*^{(\N)} ([ \pi^{(\N)*}(\mcB) \otimes \mcS ol (\nabla^\mr{ad})])) \notag \\
& \stackrel{\eqref{EQ149}}{=}
\sum_{\lambda \in C_{\N, \rho, 0, 4}} \left(-p^\N(p^\N -1)+ \lambda^\circledast  \cdot (p^\N -\lambda^\circledast) \right) \cdot [f(q_\lambda)] 
-3(p^\N -1) \cdot [S] \notag \\
& \ \ \ \  \ \  - (p^\N -1) \cdot  \mr{ch} (\mbR^1 f_{*}^{(\N)} ([\mcS ol (\nabla^\mr{ad})])) + p^\N (p^\N -1) \cdot   [D_s].  
\end{align}
This completes the proof of the assertion.
\end{proof}

\subsection{Genus formulas for  $(g, r) = (0, 4)$} \label{SS129}

Since $\overline{\mcM}_{0, 4}$ can be represented by a $k$-scheme,
it follows from the various assertions recalled in Section \ref{SS27} that 
$\mcO p_{\N, \rho, 0, 4}^{^\mr{Zzz...}}$ defines a (possibly empty) geometrically connected, smooth, and proper curve  over $k$. 
Under the assumption that $\mcO p_{\N, \rho, 0, 4}^{^\mr{Zzz...}} \neq \emptyset$,
we denote by 
\begin{align} \label{EQ213}
g_{\N, \rho, 0, 4}
\end{align}
the genus of  the dormant modular curve $\mcO p_{\N, \rho, 0, 4}^{^\mr{Zzz...}}$.
By applying Propositions \ref{Prop44} and \ref{Prp145}, one can conclude the following assertion, explicitly computing
the value $g_{\N, \rho, 0, 4}$.

\bt \label{Thm55}
Let $\rho := (\rho_i)_{i=1}^4$ be an element of $((\mbZ/p^\N \mbZ)^\times /\{ \pm 1 \})^4$, and suppose that $\mcO p_{\N, \rho, 0, 4}^{^\mr{Zzz...}} \neq \emptyset$.
Then, the following assertions hold.  
\begin{itemize}
\item[(i)]
The genus $g_{\N, \rho, 0, 4}$ of the dormant modular curve $\mcO p_{\N, \rho, 0, 4}^{^\mr{Zzz...}}$ is given by the following formula:
\begin{align} \label{EQ901}
g_{\N, \rho, 0, 4} &= 1 +  \frac{-3p^\N +1 +   \sum_{i=1}^4 \rho_i^\circledast (p^\N -\rho_i^\circledast)}{6 p^\N} \cdot \sharp (C_{\N, \rho, 0, 4})-  \sum_{\lambda \in C_{\N,\rho, 0, 4}}\frac{\lambda^\circledast (p^\N -\lambda^\circledast)}{2p^\N}
 \\
& \hspace{-3mm} \left(=1 +  \frac{-3p^\N +1 +   \sum_{i=1}^4 \rho_i^\circledast (p^\N -\rho_i^\circledast)}{2 p^\N} \cdot \sharp (C^0_{\N, \rho, 0, 4})-  \sum_{\lambda \in C_{\N,\rho, 0, 4}}\frac{\lambda^\circledast (p^\N -\lambda^\circledast)}{2p^\N}
\right).
\end{align}
\item[(ii)]
Suppose further that the inequality $2 \cdot \delta^{-1} (\rho_i) +1 \leq \frac{p^\N -3}{4}$ holds for every $i=1, \cdots, 4$ (which implies $\rho_i^\circledast = 2 \cdot \delta^{-1}(\rho_i) +1$ for every $i$ and $\lambda^\circledast = 2 \cdot \delta^{-1} (\lambda) +1$ for every $\lambda \in C_{\N, \rho, 0, 4}$).
Then, the 
formula  \eqref{EQ901} reads 
\begin{align} \label{EQ900}
g_{\N, \rho, 0, 4} &= 1 + \frac{-3 + \sum_{i=1}^4 \rho_i^\circledast}{6}  \cdot \sharp (C_{\N, \rho, 0, 4}) - \frac{1}{2} \cdot \sum_{\lambda \in C_{\N, \rho, 0, 4}} \lambda^{\circledast}  \\
& \hspace{-3mm} \left(=
1 +  \left(-1 + \sum_{i=1}^4 \delta^{-1} (\rho_i)\right) \cdot  \sharp (C_{\N, \rho, 0, 4}^0) -\sum_{\lambda \in C_{\N, \rho, 0, 4}} \delta^{-1}(\lambda)
\right).
\end{align}
\end{itemize}
\et
\begin{proof}
First, we shall consider assertion (i).
Under the identification $\psi_i = [D_s]$ and $ \Pi^* (c_1 (\mcT_{\overline{\mcM}_{0, 4}})) = -[D_s]$, the Chern character
$\mr{ch} (\mbR^1 f^{(\N)}_* (\mcS ol (\nabla^\mr{ad})))$
 of the vector bundle $\mbR^1 f^{(\N)}_* (\mcS ol (\nabla^\mr{ad}))$ satisfies 
\begin{align}
& \ \ \ \ \ \ \mr{ch} (\mbR^1 f^{(\N)}_* (\mcS ol (\nabla^\mr{ad}))) \\
& \stackrel{\eqref{EQee2}}{=}  -\mr{ch}( f_* ([F_{X/S}^{(\N)*}(\mcS ol (\nabla^\mr{ad}))])) +\mr{ch} ( f^{(\N)}_*([\mr{Coker}(\eta)])) \notag \\
& \stackrel{\eqref{EQ178}}{=} - (3-4 p^\N) \cdot [S] - D_s + \sum_{i=1}^4 \left(\frac{p^{2\N}-p^\N -2 p^\N \cdot \rho_i^\circledast + 2 \cdot \rho_i^{\circledast 2}}{2}  \cdot [D_s]\right) \\
& \ \ \ \  \ \ +  \sum_{\lambda \in C_{\N, \rho, 0, 4}}  \left(-p^\N(p^\N -1)+ \lambda^\circledast  \cdot (p^\N -\lambda^\circledast) \right) \cdot [f(q_\lambda)] 
-3(p^\N -1) \cdot [S]  \notag \\
& \ \ \ \ \ \ - (p^\N -1) \cdot  \mr{ch} (\mbR^1 f_{*}^{(\N)} ([\mcS ol (\nabla^\mr{ad})])) + p^\N (p^\N -1) \cdot   [D_s]  \notag \\
&  = p^\N \cdot [S] + \left(3p^\N (p^\N -1) -1 - \sum_{i=1}^4 \rho_i^\circledast (p^\N -\rho_i^\circledast) \right) \cdot [D_s]
 \notag \\
& \ \ \ \  +  \sum_{\lambda \in C_{\N, \rho, 0, 4}}  \left(-p^\N(p^\N -1)+ \lambda^\circledast  \cdot (p^\N -\lambda^\circledast) \right)\cdot [f(q_\lambda)] 
 - (p^\N -1) \cdot  \mr{ch} (\mbR^1 f_{*}^{(\N)} ([\mcS ol (\nabla^\mr{ad})])).  
\end{align}
It follows that
\begin{align}
  \mr{ch} (\mbR^1 f^{(\N)}_* (\mcS ol (\nabla^\mr{ad}))) 
 & = [S] +  \frac{3p^\N (p^\N -1) -1 - \sum_{i=1}^4 \rho_i^\circledast (p^\N -\rho_i^\circledast) }{p^\N} \cdot [D_s] \\
 &  \ \ \ \ + \sum_{\lambda \in C_{\N,\rho, 0, 4}}\left( -p^\N +1 +  \frac{\lambda^\circledast (p^\N -\lambda^\circledast)}{p^\N}\right) \cdot [f(q_\lambda)]. \notag
\end{align}
On the other hand,  
the relation between the degree of $\mbR^1 f^{(\N)}_* (\mcS ol (\nabla))$ and the value $g_{\N, \rho, 0, 4}$ is given by 
\begin{align}
\mr{deg}(\mbR^1 f^{(\N)}_* (\mcS ol (\nabla))) 
= 
\mr{deg} (\mcT_{S^\mr{log}/k})  =
-2  \cdot g_{\N, \rho, 0, 4} +2 - \sharp (C_{\N, \rho, 0, 4}), \notag
\end{align}
where the second equality follows from Theorem \ref{Th3}, (i).
Hence, we have
\begin{align} \label{EQ888}
g_{\N, \rho, 0, 4} &=  1 - \frac{\sharp (C_{\N, \rho, 0, 4})}{2}  -\frac{\mr{deg}(\mbR^1 f^{(\N)}_* (\mcS ol (\nabla^\mr{ad}))) }{2} \\
& = 1 - \frac{\sharp (C_{\N, \rho, 0, 4})}{2}  - \frac{1}{2} \Bigg( \frac{3p^\N (p^\N -1) -1 - \sum_{i=1}^4 \rho_i^\circledast (p^\N -\rho_i^\circledast) }{p^\N} \cdot \frac{\sharp (C_{\N, \rho, 0, 4})}{3} \\
& \ \ \ \ +  \sum_{\lambda \in C_{\N,\rho, 0, 4}}\left(-p^\N+1+\frac{\lambda^\circledast (p^\N -\lambda^\circledast)}{p^\N}\right)\Bigg) \notag \\
& = 1 +  \frac{-3p^\N +1 +   \sum_{i=1}^4 \rho_i^\circledast (p^\N -\rho_i^\circledast)}{6 p^\N} \cdot \sharp (C_{\N, \rho, 0, 4})-  \sum_{\lambda \in C_{\N,\rho, 0, 4}}\frac{\lambda^\circledast (p^\N -\lambda^\circledast)}{2p^\N}.
\end{align}
This completes the proof of assertion (i).

Next, to consider assertion (ii), we suppose that 
 $2 \cdot \lambda_i +1 \leq \frac{p^\N -3}{4}$ holds for every $i=1, \cdots, 4$,
where
 $\lambda_i := \delta_\N^{-1} (\rho_i)$.
Let $\rho_+ := (\rho_{+, i})_{i=1}^4$, where $\rho_{+, i} :=  \delta_{\N +1} (\lambda_i) \in (\mbZ/p^{\N+1} \mbZ)^\times /\{ \pm 1 \}$.
According to Proposition \ref{Prop2}, the natural projection $\mcO p_{\N +1, \rho', 0, 4}^{^\mr{Zzz...}} \rightarrow S$ is an isomorphism, which implies the equalities  $g_{\N, \rho, 0, 4} = g_{\N +1, \rho', 0, 4}$ and $\sharp (C_{\N, \rho, 0, 4}) = \sharp (C_{\N +1, \rho_+, 0, 4})$.
Moreover, by the assumption  of (ii),  we have   $\rho_i^\circledast = (\rho_i)_{\N +1}^\circledast$ for every $i$.
Hence,  \eqref{EQ888} and these equalities together induce the following sequence of equalities:
\begin{align}
0  & =  g_{\N, \rho, 0, 4} - g_{\N', \rho_{+}, 0, 4} \\
 & = 
1 +  \frac{-3p^\N +1 +   \sum_{i=1}^4 \rho_i^\circledast (p^\N -\rho_i^\circledast)}{6 p^\N} \cdot \sharp (C_{\N, \rho, 0, 4})-  \sum_{\lambda \in C_{\N,\rho, 0, 4}}\frac{\lambda^\circledast (p^\N -\lambda^\circledast)}{2p^\N} \\
& \ \ \ \ -1 -  \frac{-3p^{\N+1} +1 +   \sum_{i=1}^4 \rho_i^\circledast (p^{\N+1} -\rho_i^\circledast)}{6 p^{\N+1}} \cdot \sharp (C_{\N, \rho, 0, 4}) +   \sum_{\lambda \in C_{\N,\rho, 0, 4}}\frac{\lambda^\circledast (p^{\N+1} -\lambda^\circledast)}{2p^{\N+1}} \notag \\
& =  \frac{p-1}{6p^{\N +1}} \cdot \left( \sharp (C_{\N, \rho, 0, 4}) \cdot  \left(1 - \sum_{i=1}^4 \rho_i^{\circledast 2}\right)  + 3 \cdot \sum_{\lambda \in C_{\N, \rho, 0, 4}}\lambda^{\circledast 2} \right).
\end{align}
This implies
\begin{align}\label{EQ889}
 \sharp (C_{\N, \rho, 0, 4}) \cdot  \left(1 - \sum_{i=1}^4 \rho_i^{\circledast 2}\right)  + 3 \cdot \sum_{\lambda \in C_{\N, \rho, 0, 4}}\lambda^{\circledast 2} = 0.
\end{align}
By using this equality, we obtain
\begin{align}
g_{\N, \rho, 0, 4} & \stackrel{\eqref{EQ888}}{=}1 +  \frac{-3p^\N +1 +   \sum_{i=1}^4 \rho_i^\circledast (p^\N -\rho_i^\circledast)}{6 p^\N} \cdot \sharp (C_{\N, \rho, 0, 4})-  \sum_{\lambda \in C_{\N,\rho, 0, 4}}\frac{\lambda^\circledast (p^\N -\lambda^\circledast)}{2p^\N} \notag \\
& \stackrel{\eqref{EQ889}}{=}
1 + \frac{-3 + \sum_{i=1}^4 \rho_i^\circledast}{6}  \cdot \sharp (C_{\N, \rho, 0, 4}) - \frac{1}{2} \cdot \sum_{\lambda \in C_{\N, \rho, 0, 4}} \lambda^{\circledast}.
\end{align}
This proves the second  assertion.
\end{proof}

The following corollary establishes  a very rough estimation from above of the genus $g_{\N, \rho, 0, 4}$ induced from the above theorem.

\bco \label{Cor50}
Let us keep the notation and assumption  in the  previous theorem.
Then, the following inequality holds:
\begin{align}
g_{\N, \rho, 0 ,4} \leq   \frac{(p-1)p^{2\N-1}}{4}.
\end{align}
\eco
\begin{proof}
Since $\rho_i^\circledast (p^\N -\rho_i^\circledast) \leq \frac{1}{4} \cdot p^{2\N}$, we have
\begin{align}
g_{\N, \rho, 0, 4} &\stackrel{\eqref{EQ901}}{=} 1 +  \frac{-3p^\N +1 +   \sum_{i=1}^4 \rho_i^\circledast (p^\N -\rho_i^\circledast)}{6 p^\N} \cdot \sharp (C_{\N, \rho, 0, 4})-  \sum_{\lambda \in C_{\N,\rho, 0, 4}}\frac{\lambda^\circledast (p^\N -\lambda^\circledast)}{2p^\N} \\
& \leq  1 + \left( - \frac{1}{2} + \frac{1}{6p^\N}  + \frac{\sum_{i=1}^4 \rho_i^\circledast (p^\N -\rho_i^\circledast)}{6p^\N}\right) \cdot \sharp (C_{\N, \rho, 0, 4}) - \sum_{\lambda \in C_{\N, \rho, 0, 4}} \frac{1 \cdot (p^\N -1)}{2p^\N} \notag \\
& =  1 + \left( -1 + \frac{2}{3p^\N}  + \frac{\sum_{i=1}^4 \rho_i^\circledast (p^\N -\rho_i^\circledast)}{6p^\N} \right) \cdot \sharp (C_{\N, \rho, 0 ,4})
\\
& \leq 1 + \left( -1 + \frac{2}{3}\right) \cdot 3 +  \frac{4 \cdot \frac{p^{\N}}{2} \cdot \left(p^\N - \frac{p^\N}{2} \right)}{6p^\N} \cdot \frac{3(p-1)p^{\N -1}}{2} \notag \\
& = \frac{(p-1) p^{2\N -1}}{4},
\end{align}
where the last ``$\leq$" follows from 
Corollary \ref{Cor34}.
\end{proof}

On the other hand, we can apply Theorem \ref{Thm55} to compute the number of critical points of $\Pi_{\N, \rho, 0, 4}$.

\bco \label{Cor12}
Let $\rho$ be as in Theorem \ref{Thm55}, and
denote by $L_{\N, \rho, 0, 4}$ the number of the critical points of $\Pi_{\N, \rho, 0, 4}$.
Then, the value $L_{\N, \rho, 0, 4}$ 
 satisfies the following inequality:
 \begin{align} \label{Et543}
 L_{\N, \rho, 0, 4} \leq
 \frac{-p^\N +1 + \sum_{i=1}^4 \rho_i^\circledast (p^\N -\rho_i^\circledast)}{3p^\N} \cdot \sharp (C_{\N, \rho, 0, 4}) - \sum_{\lambda \in C_{\N, \rho, 0, 4}} \frac{\lambda^\circledast (p^\N -\lambda^\circledast)}{p^\N}.
 \end{align}
 If, moreover,   
  the inequality $2 \cdot \delta^{-1} (\rho_i) +1 \leq \frac{p^\N -3}{4}$ holds for every $i=1, \cdots, 4$, 
 then the inequality \eqref{Et543} reads 
\begin{align}
L_{\N, \rho, 0, 4} \leq
  \frac{-1 + \sum_{i=1}^4 \rho_i^\circledast}{3} \cdot \sharp (C_{\N, \rho, 0, 4})   -\sum_{\lambda \in C_{\N, \rho, 0, 4}} \lambda^\circledast.
\end{align}
\eco
\begin{proof}
The Riemann-Hurwitz formula implies the inequality
\begin{align}
L_{\N, \rho, 0, 4} &\leq 2 g_{\N, \rho, 0, 4} -2 + 2 \cdot  \mr{deg} (\Pi)  \\
&=  2 \cdot  \left( g_{\N, \rho, 0, 4} -1 + \frac{\sharp (C_{\N, \rho, 0, 4})}{3}\right) \notag \\
& \stackrel{\eqref{EQ901}}{=} \frac{-p^\N +1 + \sum_{i=1}^4 \rho_i^\circledast (p^\N -\rho_i^\circledast)}{3p^\N} \cdot \sharp (C_{\N, \rho, 0, 4}) - \sum_{\lambda \in C_{\N, \rho, 0, 4}} \frac{\lambda^\circledast (p^\N -\lambda^\circledast)}{p^\N},
\end{align}
which completes the proof of the first assertion.
Moreover, the second  assertion can be proved similarly by applying \eqref{EQ900}.
\end{proof}

\begin{exa} \label{Exa23}
Let us suppose that  $p=11$ and $\N =1$.
In this case, we have 
\begin{align}
\overline{1}^\circledast = 2, \hspace{5mm} \overline{2}^\circledast =4,  \hspace{5mm}\overline{3}^\circledast = 5,  \hspace{5mm}\overline{4}^\circledast = 3,  \hspace{5mm} \overline{5}^\circledast = 1
\end{align}
Now, let us take $\rho_0 := (\overline{3}, \overline{2}, \overline{4}, \overline{2}) \in ((\mbZ/11 \mbZ)^\times /\{  \pm 1 \})^4$.
By the definition of the set ``$C^q_{\N, \rho, 0, 4}$" ($q  =0, 1, \infty$), the following identities hold:
\begin{align}
C_{11, \rho_0, 0, 4}^\infty  = \{\overline{1}, \overline{2}, \overline{3}  \},
\hspace{5mm}
C_{11, \rho_0, 0, 4}^0 = \{\overline{1}, \overline{2}, \overline{3}  \},
\hspace{5mm}
C_{11, \rho_0, 0, 4}^1 = \{\overline{2}, \overline{3}, \overline{4}  \}.
\end{align}
In particular,  we have $\sharp (C_{1, \rho_0, 0, 4}) = 3 \cdot \sharp (C_{11, \rho_0, 0, 4}^0) = 9$.
Using these computations, 
one can apply Theorem \ref{Thm55} to obtain the following sequence of equalities:
\begin{align}
& \ \ \ \ g_{1, \rho_0, 0, 4}  \\
&= 1 +  \frac{-3p^\N +1 +   \sum_{i=1}^4 \rho_i^\circledast (p^\N -\rho_i^\circledast)}{6 p^\N} \cdot \sharp (C_{1, \rho_0, 0, 4})-  \sum_{\lambda \in C_{\N,\rho, 0, 4}}\frac{\lambda^\circledast (p^\N -\lambda^\circledast)}{2p^\N} \\
& = 1 + \frac{-3 \cdot 11 + 1 + \left( \overline{3}^\circledast  (11-\overline{3}^\circledast) + \overline{2}  (11-\overline{2}^\circledast) + \overline{4}^\circledast  (11 - \overline{4}^\circledast) + \overline{2}^\circledast  (11 -\overline{2}^\circledast)\right)}{6 \cdot 11} \cdot 9
 \notag \\
 & \ \  - \frac{\left( \overline{1}^\circledast (11- \overline{1}^\circledast) + \overline{2}^\circledast (11-\overline{2}^\circledast) + \overline{3}^\circledast (11 - \overline{3}^\circledast)\right) \cdot 2+ \left( \overline{2}^\circledast (11 -\overline{2}^\circledast) + \overline{3}^\circledast (11 -  \overline{3}^\circledast) + \overline{4}^\circledast (11 -\overline{4}^\circledast)\right)}{2 \cdot 11} \notag \\
& = 1 + \frac{-3 \cdot 11 + 1 + \left( 5\cdot 6 + 4 \cdot 7 + 3 \cdot 8 + 4 \cdot 7\right)}{6 \cdot 11} \cdot 9
  - \frac{\left(   2 \cdot 9 + 4 \cdot 7 + 5 \cdot 6\right) \cdot 2+ \left(4 \cdot 7 + 5 \cdot 6  + 3 \cdot 8 \right)}{2 \cdot 11} \notag \\
 & = 1 + \frac{117}{11} - \frac{117}{11} \notag \\
 & =1.
\end{align}
In particular, the dormant modular curve $\mcO p_{1, \rho_0, 0, 4}^{^\mr{Zzz...}}$ is not rational.

We remark that  the second genus formula \eqref{EQ900} cannot be applied in this case,
as 
 $2 \cdot \delta^{-1} (\overline{2}) +1 = 2 \cdot 3 +1 = 7   \not\leq \frac{11 -1}{2}$, which means that the condition described at the beginning of assertion (ii) in Theorem \ref{Thm55} does not hold.
In fact, the right-hand side of \eqref{EQ900} can be computed as follows:
\begin{align}
&  \  \ \ \ 
\left(1 + \frac{-3 + \sum_{i=1}^4 \rho_i^\circledast}{6}  \cdot \sharp (C_{\N, \rho, 0, 4}) - \frac{1}{2} \cdot \sum_{\lambda \in C_{\N, \rho, 0, 4}} \lambda^{\circledast} \right) \Biggl|_{(p, \N, \rho)= (11, 1, \rho_0)} \\
&   =1 +  \frac{-3 +  (\overline{3}^\circledast + \overline{2}^\circledast + \overline{4}^\circledast + \overline{2}^\circledast)}{6} \cdot 9
  \notag \\
& \ \ \ \ 
- \frac{1}{2} \cdot  \left( (\overline{1}^\circledast + \overline{2}^\circledast + \overline{3}^\circledast) 
+ (\overline{2}^\circledast + \overline{3}^\circledast + \overline{4}^\circledast) + (\overline{1}^\circledast + \overline{2}^\circledast + \overline{3}^\circledast)
 \right) \notag \\
 & = 1 +  \frac{-3 + (5 + 4 + 3 + 4)}{6} \cdot 9  - \frac{1}{2} \cdot \left( (2 + 4 + 5) + (4+ 5 + 3) + (2+4+ 5)\right) \notag \\
 & = 1 + \frac{13}{6} \cdot 9  - \frac{1}{2} \cdot 34\notag \\
 & = \frac{7}{2} \left( \neq 1 \right).
\end{align}
\end{exa}

\begin{exa} \label{Exa99}
Let $\M$ be a positive integer satisfying $m < \frac{p-1}{8}$.
Let $\rho := (\rho_i)_{i=1}^4$ be the  quadruple such 
that $\rho_1 = \rho_2 = \rho_3 = \rho_4 = \delta (\lambda)$, where
$\lambda := \frac{m(p^\N -1)}{p-1} \left(= \sum_{i=0}^{\N -1} m \cdot p^i \right)$.
Then, it is verified that 
\begin{align}
B_{\N, (\lambda, \lambda, \lambda, \lambda), 0, 4} = \left\{ \sum_{j=0}^{\N-1} a_j \cdot p^j \, \Bigg| \, 0\leq a_j \leq 2 m \ \text{for every $j$} \right\}.
\end{align}
In particular, we have $\sharp (C_{\N, \rho, 0, 4}^0) = (2 \M +1)^{\N}$.
The sum of the numbers in $B_{\N, (\lambda, \lambda, \lambda, \lambda), 0, 4}$ is  computed as follows:
\begin{align}
\sum_{\eta \in B_{\N, (\lambda, \lambda, \lambda, \lambda), 0, 4}} \eta &=
\sum_{(a_i)_{i=0}^{\N -1} \in \{0, 1, \cdots, 2 \M \}^\N} \sum_{j=0}^{\N -1} a_j \cdot p^j \\
& =  \sum_{j=0}^{\N -1} \sum_{(a_i)_{i=0}^{\N -1} \in \{0, 1, \cdots, 2 \M \}^\N} a_j \cdot p^j \notag \\
& =  \sum_{j=0}^{\N -1} (2m+1)^{\N -1} \cdot \frac{2\M \cdot (2\M +1)}{2} \cdot p^j \notag \\
& = \M \cdot (2\M +1)^{\N} \cdot \frac{p^\N -1}{p-1}.
\end{align}
Hence, by  \eqref{EQ900},  the genus $g_{\N, \rho, 0, 4}$ satisfies 
\begin{align}
g_{\N, \rho, 0, 4} & = 1 + \left(-1 + \sum_{i=1}^4 \delta^{-1}(\rho_i) \right) \cdot  \sharp (C_{\N, \rho, 0, 4}^0) - \sum_{\lambda \in C_{\N, \rho, 0, 4}} \delta^{-1}(\lambda) \\
& = 1 + \left(-1 + 4 m\cdot \frac{p^\N -1}{p-1} \right) \cdot (2\M +1)^{\N} - 3  m \cdot (2\M +1)^{\N} \cdot \frac{p^\N -1}{p-1} \notag \\
& = 1 + \left(m \cdot \frac{p^\N -1}{p-1} -1\right)(2\M +1)^\N.
\end{align}
\end{exa}

\begin{exa} \label{Exa30}
Suppose that $4 \mid p^\N -1$, and let $\rho := (\rho_i)_{i=1}^4$ be the quadruple defined by $\rho_1 = \rho_2 = \rho_3 = \rho_4 = \delta (\frac{p^\N -1}{4})$.
In particular, we have $\rho_i^\circledast = \frac{p^\N -1}{2}$.
It is verified that $B_{\N, (\lambda, \lambda, \lambda, \lambda), 0, 4} = B_\N$,  so the equality 
\begin{align} \label{Eq2098}
\left\{ \lambda^\circledast \, | \, \lambda \in C_{\N, \rho, 0, 4}^0 \right\} = \left\{ j \in \mbZ \, \Bigg| \, 1 \leq j \leq \frac{p^\N -1}{2}, p \nmid j\right\}
\end{align}
holds.
By using this equality, we have 
\begin{align}
\sum_{\lambda \in C^0_{\N, \rho, 0, 4}} \lambda^\circledast (p^\N -\lambda^\circledast) 
=\sum_{1 \leq j \leq \frac{p^\N -1}{2}, p\nmid j} j (p^\N -j) =  \frac{p^\N(p-1)(p^{2\N -1}+1)}{12}
\end{align}
Thus, the formula \eqref{EQ901} induces
\begin{align} \label{Eq891}
& \ \ \ \ g_{\N, \rho, 0, 4} \notag \\
&= 1 + \frac{-3p^\N +1 + 4 \cdot \frac{p^\N -1}{2} \cdot \left(p^\N -\frac{p^\N -1}{2}\right)}{6p^\N} \cdot \left(3 \cdot \frac{p^{\N -1} (p-1)}{2} \right) - \frac{3}{2p^\N} \cdot \frac{p^\N (p-1)(p^{2\N -1}+1)}{12} \notag \\
& = 1 + \frac{(p-1)(p^{2\N-1}-6p^{\N -1} -1)}{8}.
\end{align}
\end{exa}

\vspace{10mm}
\section{Asymptotic analysis for  dormant modular curves} \label{S5}

In this section, we investigate the asymptotic behavior of towers  of function fields associated to the projective system of dormant modular curves  of type $(0, 4)$ under level reduction. 
We introduce the notion of {\it asymptotic $\alpha$-goodness} for towers of function fields (cf. Definition \ref{Def57}), providing a quantitative measure of asymptotic richness comparable to that of the best classical example.
The consequence of our discussion is to estimate the geometric complexity of the tower under consideration by using this notion (cf. Theorems  \ref{Thm556}, \ref{Th45}).

Suppose that $k$ is a finite field of characteristic $p$.

\subsection{Asymptotically $\alpha$-good towers of function fields} \label{SS322}

For each function field $K$ over $k$, we denote by $P (K)$ the number of rational places and by $g (K)$ the genus of $K$ (i.e., the genus of the smooth projective curve associated to $K$).
Recall (cf. ~\cite[Definition 7.2.1]{Sti}) that a {\bf tower over $k$} (or a $k$-tower) is an infinite sequence $\mcK := (K_0, K_1, K_2, \cdots)$ of function fields $K_i/k$ satisfying the following conditions:
\begin{itemize}
\item[(a)]
$K_0 \subsetneq K_1 \subsetneq K_2 \subsetneq \cdots \subsetneq K_\ell \subsetneq \cdots$;
\item[(b)]
Each extension $K_{\ell +1}/K_\ell$ is finite and separable;
\item[(c)]
The genera satisfy $g (K_\ell) \to \infty$ as $\ell \to \infty$
\end{itemize}
(cf. ~\cite[Definition 7.2.1]{Sti}).

Now, let $\mcK := (K_0, K_1, K_2, \cdots)$ be a tower over $k$.
It is well-known that the limit $\lambda (\mcK) := \lim\limits_{\ell \to \infty} \frac{N (K_\ell)}{g (K_\ell)}$ does exist, and 
the resulting real number $\lambda (\mcK)$ is called the {\bf limit} of the $k$-tower $\mcK$.
The tower $\mcK$ is called {\bf asymptotically good} if 
it has a positive limit $\lambda (\mcK) > 0$ (cf. ~\cite[Definition 7.2.5]{Sti}).
The following generalizes this notion.

\bde \label{Def57}
Let $\alpha$ be a nonnegative  number.
We say that the tower $\mcK$ is {\bf asymptotically $\alpha$-good} 
if it  satisfies
\begin{align}
g (K_\N)^\alpha   = O (\sharp (P (K_\N))) \ (\N \to \infty).
\end{align}
That is to say,  there exist $C \in \mbR_{> 0}$ and $\N' \in \mbZ_{>0}$ such that $g (K_\N)^\alpha < C \cdot \sharp (P (K_\N))$ for every $\N \geq \N'$.
(The asymptotical $\alpha$-goodness can be defined even when $\mcK$ does not satisfy the third condition (c) described above.
Accordingly, in discussing this notion, we shall also consider   such sequences of function fields  $\mcK$.) 
\ede

If $\mcK$ is asymptotically $\alpha$-good, then it is also asymptotically $\alpha'$-good for any $\alpha' < \alpha$.
Hence, it makes sense to speak of the supremum 
\begin{align}
\Delta (\mcK) := \sup_{\alpha} \left\{\alpha \, | \, \text{$\mcK$ is asymptotically $\alpha$-good} \right\}.
\end{align}
It is verified that  $\mcK$ is asymptotically good if and only if it is asymptotically $1$-good, or equivalently, $\Delta (\mcK) \geq 1$.

\subsection{Towers of  dormant modular curves} \label{SS318}

Let $\N$ be a positive integer and $\rho$ an element of $((\mbZ/p^\N \mbZ)^\times /\{\pm 1\})^4$.
Denote by $K_{\N, \rho, 0, 4}$ the function field associated to the dormant modular curve $\mcO p_{\N, \rho, 0, 4}^{^\mr{Zzz...}}$ (hence $g (K_{\N, \rho, 0, 4}) = g_{\N, \rho, 0, 4}$).

\bpr \label{Prop31A}
The number $P (K_{\N, \rho, 0, 4})$
of rational places in $K_{\N, \rho, 0, 4}$ (i.e., the number of $k$-rational points of $\mcO p^{^\mr{Zzz...}}_{\N, \rho, 0, 4}$)
  satisfies the inequality 
\begin{align}
\sharp P (K_{\N, \rho, 0, 4}) \left(=\sharp (\mcO p^{^\mr{Zzz...}}_{\N, \rho, 0, 4} (k))\right) \geq \sharp (C_{\N, \rho, 0, 4}).
\end{align}
\epr
\begin{proof}
According to ~\cite[Theorem 8.27]{Wak20},
any dormant $\mr{PGL}_2^{(\N)}$-oper on a $3$-pointed projective line can be defined over $k$.
Since any dormant  $\mr{PGL}_2^{(\N)}$-oper $\msE^\spadesuit$ on a pointed curve in $\partial \overline{\mcM}_{0, 4}$ is obtained by gluing together such  dormant $\mr{PGL}_2^{(\N)}$-opers, 
$\msE^\spadesuit$ can be defined over $k$.
Moreover, recall that the projection $\Pi_{\N, \rho, 0, 4}$ is \'{e}tale over the points in $\partial \overline{\mcM}_{0, 4}$ (cf. ~\cite[Theorem C, (i)]{Wak20}), so we have 
\begin{align} \label{Eq367}
\sharp (\mcO p^{^\mr{Zzz...}}_{\N, \rho, 0, 4} (k)) &\geq \sharp (\partial \mcO p^{^\mr{Zzz...}}_{\N, \rho, 0, 4} (\overline{k})) \\
& = \sharp (\partial (\overline{\mcM}_{0, 4} (k))) \cdot \mr{deg}(\Pi_{\N, \rho, 0, 4}) \notag \\
&=  3 \cdot \left( \frac{1}{3} \cdot \sharp (C_{\N, \rho, 0, 4})\right) \notag \\
&= \sharp (C_{\N, \rho, 0, 4}). \notag
\end{align}
This completes the proof of the assertion.
\end{proof}

\bt \label{Prop31B}
Let us keep the above notation, and
let $s$ be an integer with $1 \leq s \leq \frac{p-3}{2}$.
Suppose that $\rho$ lies in $\delta (D_{s, \N})$.
Then, the following  inequality holds:
\begin{align} \label{EQ906}
P (K_{\N, \rho, 0, 4}) \geq 3 \cdot \left(\frac{4p}{p-1} \right)^{\frac{\log s}{2 \log p}} \cdot g_{\N, \rho, 0, 4}^{-\frac{\log s}{2 \log p}} .
\end{align}
\et
\begin{proof}
By Corollary \ref{Cor50}  and Lemma \ref{Lem22}, we have 
\begin{align}
P (K_{\N, \rho, 0, 4}) \cdot g_{\N, \rho, 0, 4}^{-\frac{\log s}{2 \log p}} \geq 3 \cdot s^\N \cdot  \left( \frac{(p-1)p^{2\N -1}}{4}\right)^{-\frac{\log s}{2\log p}} 
= 3 \cdot \left(\frac{4p}{p-1} \right)^{\frac{\log s}{2 \log p}},
\end{align}
which completes the proof  of the assertion.
\end{proof}

Next, let $\mbZ_p^\times / \{ \pm 1 \}$ be  the set of equivalence classes of elements $a \in \mbZ_p^\times \left(= \mbZ_p \setminus p\mbZ_p \right)$, in which $a$ and $-a$ are identified.
Note that $\mbZ_p^\times / \{ \pm 1 \} \cong \varprojlim_{\N} \left((\mbZ/p^\N \mbZ)^\times / \{ \pm 1 \}\right)$.

We fix an element $\rho$ of $(\mbZ_p^\times /\{ \pm 1 \})^4$.
For  a positive integer $\N$, we denote by $\rho_\N$ the element of $((\mbZ/p^\N \mbZ)^\times /\{ \pm 1 \})^4$ induced from $\rho$ via the natural quotient $\mbZ_p^\times \twoheadrightarrow (\mbZ/p^\N \mbZ)^\times$.
Then, we obtain a collection of function fields 
\begin{align} \label{EQ922}
\mcK_{\N, \rho, 0, 4} := (K_{1, \rho_1, 0, 4},  K_{2, \rho_2, 0, 4},  \cdots ,  K_{\N, \rho_\N, 0, 4},   \cdots)
\end{align}
equipped with  a chain of inclusions  $K_{1, \rho_1, 0, 4} \subseteq K_{2, \rho_2, 0, 4} \subseteq  \cdots \subseteq  K_{\N, \rho_\N, 0, 4} \subseteq \cdots$  associated to \eqref{Eq12}.
Since the projection $\Pi_{\N, \rho_\N, 0, 4}$ is generically \'{e}tale for every $\N$, each extension $K_{\N+1, \rho_{\N+1}, 0, 4}/K_{\N, \rho_\N, 0, 4}$  turns out to be  finite and separable.
Hence, under the assumption that    $\lim\limits_{\N \to \infty} g_{\N, \rho_\N, 0, 4} = \infty$,
the collection $\mcK_{\N, \rho, 0, 4}$ defines a $k$-tower.

\bt \label{Th45}
Let $s$ be an integer with $1 \leq s \leq \frac{p-3}{2}$.
(Note that the subsets $\delta (D_{s, \N}) \subseteq ((\mbZ/p^\N \mbZ)^\times /\{ \pm 1 \})^4$ forms a projective system, so we obtain its projective limit $\varprojlim\limits_{\N}\delta (D_{s, \N})$, considered as a subset of $(\mbZ_p^\times /\{ \pm 1 \})^4$).
Also, $\rho$ be an element of $\varprojlim\limits_{\N}\delta (D_{s, \N})$.
Then,  the $k$-tower $\mcK_{\N, \rho, 0, 4}$ is asymptotically $\frac{\log s}{2 \log p}$-good.
\et
\begin{proof}
The assertion follows from  Theorem \ref{Prop31B}.
\end{proof}

Next, let us briefly consider how large the value  $\Delta (\mcK_{\N, \rho, 0, 4})$ can take when we change the choice of $\rho$.

\bt \label{Thm556}
 Let us keep the above notation, and suppose that $4 \mid p-1$.
 Then, the following inequality holds:
\begin{align} \label{Eq3009}
\sup_{\rho \in (\mbZ_p^\times /\{ \pm 1 \})^4}\Delta (\mcK_{\N, \rho, 0, 4}) \geq \frac{1}{2}.
\end{align}
\et
\begin{proof}
Since $\frac{p^\N -1}{4} =p^0 \cdot \frac{p-1}{4}+ p^1 \cdot \frac{p-1}{4} + \cdots +  p^{\N-1} \cdot \frac{p-1}{4}$, our assumption implies $4 \mid p^\N -1$ for every $\N \in \mbZ_{>0}$.
Now, let us take $\rho := (\rho_i)_{i=1}^4$ to be the  element of $(\mbZ_p^\times / \{ \pm 1 \})^4$ such that $\rho_i$'s  are the class represented by $-\frac{1}{4}$ .
In particular, 
$\rho_\N = \left(\delta_\N  \left( \frac{p^\N-1}{4}\right), \delta_\N\left( \frac{p^\N-1}{4}\right), \delta_\N\left( \frac{p^\N-1}{4}\right), \delta_\N\left( \frac{p^\N-1}{4}\right)  \right)$.
According to \eqref{Eq2098}, we have $\sharp (C_{\N, \rho_\N, 0, 4}) = 3 \cdot \sharp (C_{\N, \rho, 0, 4}^0) = 3 \cdot \frac{p^{\N-1}(p-1)}{2}$. 
It follows that
\begin{align}
& \ \ \ \ P (K_{\N, \rho_\N, 0, 4}) \cdot g_{\N, \rho_\N, 0, 4}^{- \frac{1}{2}} \\
 &\geq \sharp (C_{\N, \rho_\N, 0, 4}) \cdot g_{\N, \rho_\N, 0, 4}^{- \frac{1}{2}}  \notag \\
& = 3 \cdot \sharp (C_{\N, \rho, 0, 4}^0)  \cdot  \left(1 + \frac{(p-1)(p^{2\N-1}-6p^{\N-1} -1)}{8} \right)^{-\frac{1}{2}} \notag \\
&  = 3 \cdot \frac{p^{\N-1}(p-1)}{2}  \cdot  \left(1 + \frac{(p-1)(p^{2\N-1}-6p^{\N-1} -1)}{8} \right)^{-\frac{1}{2}} \notag \\
& \geq 3 \cdot \frac{p^\N}{3} \cdot \left(\frac{p^{2\N}}{4}\right)^{-\frac{1}{2}} \notag \\
& =2,
\end{align}
where the first ``$\geq$"  follows from \eqref{Eq367} and the first   ``$=$" follows from \eqref{Eq891}.
This implies $\Delta (\mcK_{\N, \rho, 0, 4}) \geq  \frac{1}{2}$, which completes the proof of the assertion.
\end{proof}

\begin{rem}
We conjecture that the left-hand  side of \eqref{Eq3009} coincides with $\frac{1}{2}$ (for every odd prime $p$), but we have not yet been able to prove this claim. 
Note that, in addition to the case 
where $\rho = \left(\overline{\left(-\frac{1}{4}\right)}, \overline{\left(-\frac{1}{4}\right)}, \overline{\left(-\frac{1}{4}\right)}, \overline{\left(-\frac{1}{4}\right)} \right)$
 (cf.  the proof of Theorem \ref{Thm556}), 
 there are  many choices  of  $\rho$ such that  $\Delta (\mcK_{\N, \rho, 0, 4})$ is greater than or equal to  $\frac{1}{2}$.
 In fact, 
 $\mcK_{\N, \rho, 0, 4}$ is asymptotically $\frac{1}{2}$-good
 under the assumption that
 there exists a positive number $C$ satisfying $\sharp (C_{\N, \rho_\N, 0, 4}) \geq C \cdot p^\N$ for any sufficiently large $\N$.
 This claim is verified from the following sequence of inequalities defined for such $\N$'s:
 \begin{align}
 P (K_{\N, \rho_\N, 0, 4}) \cdot g_{\N, \rho_\N, 0, 4}^{-\frac{1}{2}} \geq 
 \sharp (C_{\N, \rho_\N, 0, 4}) \cdot \left(\frac{p^{2\N}}{4} \right)^{-\frac{1}{2}} \geq C \cdot p^\N \cdot\left(\frac{p^{2\N}}{4} \right)^{-\frac{1}{2}} =2 \cdot C,
 \end{align}
 where the first ``$\geq$" follows from \eqref{Eq367} and Corollary \ref{Cor50}.
  For example,   we see that the case where  $\rho = \left(\overline{\left(-\frac{1}{4}\right)}, \overline{\left(-\frac{1}{4}\right)}, \overline{\left(-\frac{1}{4}\right)}, \overline{\left(-\frac{5}{4}\right)} \right)$ satisfies this assumption, so 
  the  tower  $\mcK_{\N, \rho, 0, 4}$ is asymptotically $\frac{1}{2}$-good. 
\end{rem}

\vspace{10mm}
\section{Appendix: Heun's differential equations} \label{SA}

In this appendix, we discuss the relationship between dormant $\mr{PGL}_2$-oper on a $4$-pointed projective line and Heun's differential operators.

Let us fix an element $t \in k \setminus \{ 0, 1\}$.
 The  $4$-pointed projective line $\msP_t := (\mbP^1, \{0, 1, \infty, t \})$ over $k$ determines a log structure on $\mbP^1$; the resulting log scheme is denoted by $\mbP^{1 \mr{log}}$.

Recall that {\bf Heun's equation} is the differential equation 
 $\msD_{\vec{\alpha}} (f) = 0$ associated to 
a differential operator 
\begin{align}
 \msD_{\vec{\alpha}} := \frac{d^2}{dx^2} + \left(\frac{\gamma}{x}  + \frac{\delta}{x-1} + \frac{\epsilon}{x-t}\right) \frac{d}{dx} + \frac{\alpha \beta x  -q}{x (x-1) (x-t)}
\end{align}
on $\mbP^1 \setminus \{0, 1,  \infty, t \} \left(=\mr{Spec} (k [x, \frac{1}{x (x-1)(x-t)}])\right)$ defined for  a collection $\vec{\alpha} := (\alpha, \beta, \gamma, \delta, \epsilon, q) \in k^6$ with $\alpha + \beta +1 = \gamma + \delta + \epsilon$.
We shall call such a differential operator  {\bf Heun's differential operator}.
Note that
 $D_{\vec{\alpha}}^\clubsuit := dx^{\otimes 2} \otimes \msD_{\vec{\alpha}}$ defines a $2$nd differential operator $\mcO_{\mbP^1} \rightarrow \Omega^{\otimes 2}$ with unit principal symbol under the natural identification $\Omega^{\otimes 2} \otimes \mcT^{\otimes 2} = \mcO_{\mbP^1}$.
That is, $D^\clubsuit_{\vec{\alpha}}$ defines a $(2, 1, \mcO_{\mbP^1})$-projective connection on $\mbP^{1\mr{log}}/k$ in the sense of ~\cite[Definition 4.26]{Wak5}.
We shall write
\begin{align}
\msE^\spadesuit_{\vec{\alpha}}
\end{align}
for the corresponding $\mr{PGL}_2$-oper on $\msP_t$ via the correspondence in ~\cite[Theorem D and Equation (554)]{Wak5}.

If we write $y:= x-1$, $z := 1/x$, $w := x-t$, then the following equalities hold:
\begin{align} \label{EQQ288}
D_{\vec{\alpha}}^\clubsuit &= \left( \frac{dx}{x}\right)^{\otimes 2} \otimes \left( \partial_x^2 + \left(-1 + \gamma + \frac{\delta x}{x-1} + \frac{\epsilon x}{x-t} \right)  \partial_x + \frac{\alpha \beta x^2 - q x}{(x-1) (x-t)}\right) \\
& = 
\left( \frac{dy}{y}\right)^{\otimes 2} \otimes \left( \partial_y^2 + \left(1 - \delta + \frac{\gamma y}{y+1} - \frac{\epsilon y}{y + 1-t} \right) \partial_y + \frac{\alpha \beta y^2 + (\alpha \beta - q)y}{(y+1) (y+ 1-t)}\right) \notag \\
& = 
\left(\frac{dz}{z} \right)^{\otimes 2} \otimes \left( \partial_z^2 + \left(1-\gamma + \frac{\delta}{z-1} + \frac{\epsilon}{t (z-1/t)} \right)  \partial_z + \frac{-qz + \alpha \beta}{t(z-1) (z-1/t)} \right) \notag \\
& =
\left(\frac{dw}{w} \right)^{\otimes 2} \otimes \left( \partial_w^2 + \left(1 - \epsilon + \frac{\gamma w}{w+t}- \frac{\delta w}{w -1+t} \right)  \partial_w + \frac{\alpha \beta w^2 + (\alpha \beta t - q)w}{(w +t) (w-1+t)}\right), \notag
\end{align} 
where  $\partial_v := v  \frac{d}{dv}$ for $v \in \{ x, y , z, w \}$.
In particular,  the radius  of $\msE^\spadesuit_{\vec{\alpha}}$ at $0, 1, t$, and $\infty$ are, respectively, given by 
$\overline{\left(\frac{1- \gamma}{2}\right)}$, $\overline{\left(\frac{1-\delta}{2}\right)}$, $\overline{\left(\frac{\alpha - \beta}{2}\right)}$, and $\overline{\left(\frac{1-\epsilon}{2}\right)}$.

\bpr \label{Pro559}
The assignment $\msD_{\vec{\alpha}} \mapsto \msE_{\vec{\alpha}}^\spadesuit$ induces a surjective map
\begin{align} \label{Eq992}
\begin{pmatrix} \text{the set of Heun's differential operators} \\ \text{having a full set of root functions}\end{pmatrix}
\xrightarrow{\sim}
\begin{pmatrix} \text{the set of isomorphism classes} \\ \text{of dormant $\mr{PGL}_2$-opers on $\msP_t$}\end{pmatrix}.
\end{align}
Moreover, 
for two collections $\vec{\alpha}_i := (\alpha_i, \beta_i, \gamma_i, \delta_i, \epsilon_i, q_i) \in k^6$ ($i=1, 2$) with $\alpha_i +\beta_i +1 = \gamma_i + \delta_i + \epsilon_i$,
the associated Heun's differential operators  $\msD_{\vec{\alpha}_1}$,  $\msD_{\vec{\alpha}_2}$ determine the same  dormant $\mr{PGL}_2$-oper under \eqref{Eq992} if and only if the following equalities hold:
\begin{align}
(\alpha_1 - \beta_1)^2 &= (\alpha_2 - \beta_2)^2, \\
 (1-\gamma_1)^2 &= (1-\gamma_2)^2, \\
 (1 -\delta_1)^2 &= (1-\delta_2)^2, \\
  (1-\epsilon_1)^2 &= (1-\epsilon_2)^2, \\
   t\gamma_1 \delta_1 + \gamma_1 \epsilon_1 + 2q_1 &= t\gamma_2 \delta_2 + \gamma_2 \epsilon_2 + 2q_2.
\end{align}
\epr
\begin{proof}
First, we shall prove the surjectivity of  \eqref{Eq992}.
Let us take a dormant $\mr{PGL}_2$-oper $\msE^\spadesuit$ on $\msP_t$, and choose a dormant $2^{(1)}$-theta characteristic $\vartheta$ of $\msP_t$ whose underlying line bundle coincides with $\mcO_{\mbP^1}$.
According to  ~\cite[Theorem D]{Wak5},  $\msE^\spadesuit$ 
arises from a $(2, 1, \vartheta)$-projective connection $D^\clubsuit$ with a full set of  root functions (cf. ~\cite[Definition 4.64]{Wak5}).
After replacing $\vartheta$ with another, 
the local expression of  this differential operator around  $0 \in \mbP^1$ can be given by
\begin{align}  \label{EQgght}
\left(\frac{dx}{x} \right)^{\otimes 2} \otimes \left(\partial_x^2 + \left(r + \frac{\delta}{x-1} + \frac{\epsilon t}{x-t} \right) \partial_x + \frac{s x -q}{(x-1)(x-t)}\right)\end{align}
for some $\delta, \epsilon, q, r, s \in k$.
We set $\gamma := 1 + r -\delta -\epsilon$.
Also, let $\alpha, \beta$ be the roots of the characteristic polynomial $h(T)  \in k [T]$ of  $D^\clubsuit$ at $\infty \in \mbP^1$.
It follows from the expression \eqref{EQgght} that 
\begin{align}
(T -\alpha)(T-\beta) =h (T) = T^2 + (1 -\gamma - \delta - \epsilon)T + s.
\end{align}
This implies $\alpha + \beta +1 = \gamma + \delta + \epsilon$ and $s = \alpha \beta$.
Thus, the differential operator $D^\clubsuit$ coincides with $D^\clubsuit_{\vec{\alpha}}$, where $\vec{\alpha} := (\alpha, \beta, \gamma, \delta, \epsilon, q)$.
This completes the surjectivity of \eqref{Eq992}.

Next, we shall prove the second assertion.
Let $\vec{\alpha}_i$ ($i=1, 2$) be as in the statement of this proposition, and let $\vec{\alpha} := (\alpha, \beta, \gamma, \delta, \epsilon, q)$ be either $\vec{\alpha}_1$ or $\vec{\alpha}_2$.
By the first equality in \eqref{EQQ288},
the connection defining the $(\mr{GL}_2, 1, \mcO_{\mbP^1})$-oper (in the sense of ~\cite[Definition 4.27, (i)]{Wak5}) associated to  $D_{\vec{\alpha}}$ is of the form   $d + \frac{dx}{x} \otimes \begin{pmatrix} 0 & U  \\ 1 & V\end{pmatrix}$,
where $U := -\frac{\alpha \beta x^2 -qx}{(x-1)(x-t)}$ and $V = -\left( -1 + \gamma + \frac{\delta x}{x-1}+ \frac{\epsilon x}{x-t}\right)$.
After possibly tensoring with a flat line bundle and a suitable gauge transformation,
this can be transformed into the connection $d + \frac{dx}{x} \otimes \begin{pmatrix} 0 & U +  \frac{1}{4}\cdot V^2 - \frac{x}{2} \cdot  \frac{d}{dx}V \\ 0 & 0\end{pmatrix}$.
Let us set $A \left(\text{or} \ A_{\vec{\alpha}} \right):= \alpha - \beta$, $C  \left(\text{or} \ C_{\vec{\alpha}} \right):= 1-\gamma$, $D  \left(\text{or} \ D_{\vec{\alpha}} \right) := 1 -\delta$, and $E  \left(\text{or} \ E_{\vec{\alpha}} \right) := 1-\epsilon$.
Then,
we have 
\begin{align}
 U +  \frac{1}{4}\cdot V^2 - \frac{x}{2} \cdot  \frac{d}{dx}V = \frac{N_{\vec{\alpha}, 4} x^4 + N_{\vec{\alpha}, 3}x^3 + N_{\vec{\alpha}, 2}x^2 + N_{\vec{\alpha}, 1}x + N_{\vec{\alpha}, 0}}{4(x-1)^2(x-t)^2},
\end{align}
where 
\begin{align}
N_{\vec{\alpha}, 0} &= C^2 t^2,\\
N_{\vec{\alpha}, 1} &=2t \left(-C^2t-C^2+Ct + C -CDt  -CE + Dt +E  +2q -t-1 \right), \\
N_{\vec{\alpha}, 2} &= A^2t +C^2t^2 + 3C^2t + C^2 -2Ct^2 - 4Ct -2C+ 2CDt^2 + 2 CDt  + 2CEt + 2CE \\ & \ \ \ \ 
  + D^2t^2 -D^2t  -2Dt^2 -2Dt  -2Et-2E -E^2t + E^2  -4qt -4q +t^2 +6t+1, \\
N_{\vec{\alpha}, 3} &=-A^2(t+1) -C^2t -C^2+2Ct+2C -2CDt  -2CE -D^2t + D^2 + 2Dt + E^2t \\ & \ \ \ \ -E^2+2E +4q -2t -2, \\
N_{\vec{\alpha}, 4} &=A^2.
\end{align}
Observe that 
\begin{align}
\left( N'_{\vec{\alpha}, 2} := \right) &  \   N_{\vec{\alpha}, 2} - \frac{(t+1)N_{\vec{\alpha}, 1}}{t}  =tA^2  - (t^2+t+1)C^2+ t(t-1)D^2 + (1-t)E^2 -(t-1)^2, \\
\left( N'_{\vec{\alpha}, 3} := \right) & \  N_{\vec{\alpha}, 3} - \frac{N_{\vec{\alpha}, 1}}{t}  = (t+1)(C^2-A^2) + (t-1)(E^2-D^2). 
\end{align}
Hence, we have 
\begin{align}
\msE^\spadesuit_{\vec{\alpha}_1} \cong \msE^\spadesuit_{\vec{\alpha}_2}
\ &\Longleftrightarrow \ N_{\vec{\alpha}_1, j} = N_{\vec{\alpha}_2, j}  \ \text{for any} \ j=0, 1,2,3,4 \\
& \Longleftrightarrow \ N_{\vec{\alpha}_1, j} = N_{\vec{\alpha}_2, j}  \ \text{for} \ j=0, 1,4 \ \text{and} \ N'_{\vec{\alpha}_1, 2}= N'_{\vec{\alpha}_2, 2}
\ \text{and} \ N'_{\vec{\alpha}_1, 3}= N'_{\vec{\alpha}_2, 3} \\
&  \Longleftrightarrow \    A^2_{\vec{\alpha}_1} =   A^2_{\vec{\alpha}_2}, C^2_{\vec{\alpha}_1} = C^2_{\vec{\alpha}_2}, D^2_{\vec{\alpha}_1} = D^2_{\vec{\alpha}_2}, E^2_{\vec{\alpha}_1} = E^2_{\vec{\alpha}_2}, \ \text{and} \ N_{\vec{\alpha}_1, 1} = N_{\vec{\alpha}_2, 1}.
\end{align}
Since $\frac{N_{\vec{\alpha}, 1}}{2t} = -C^2t -C^2 + \left(t\gamma \delta + \gamma \epsilon +2q \right)$,
the equality $N_{\vec{\alpha}_1, 1} = N_{\vec{\alpha}_2, 1}$ is equivalent to the equality  $t\gamma_1 \delta_1 + \gamma_1 \epsilon_1 +2q_1 = t\gamma_2 \delta_2 + \gamma_2 \epsilon_2 +2q_2$ under the assumption that $F_{\vec{\alpha}_1}^2 =F^2_{\vec{\alpha}_2}$ for $F \in \{A, C, D, E \}$.
This completes the proof of this proposition.
\end{proof}

\end{document}